\documentclass[a4paper,final]{amsart}
\usepackage[margin=1.3in]{geometry}

\usepackage{amsmath}%
\usepackage{amsfonts}%
\usepackage{amssymb}%
\usepackage{amsthm}
\usepackage{graphicx}
\usepackage{dsfont}
\usepackage{pdfcomment}
\usepackage{tikz}
\usepackage{tikz-3dplot}
\usepackage{subfigure}
\usepackage{soul}
\usepackage{enumerate}
\usepackage{cancel}
\usepackage{stmaryrd}

\usepackage{hyperref}

\usetikzlibrary{decorations.markings}
\newtheorem{theorem}{Theorem}[section]

\newtheorem{conjecture}{Conjecture}
\newtheorem{corollary}[theorem]{Corollary}

\newtheorem{definition}[theorem]{Definition}

\allowdisplaybreaks

\newtheorem{lemma}[theorem]{Lemma}

\newtheorem{proposition}[theorem]{Proposition}
\newtheorem{remark}[theorem]{Remark}

\DeclareMathOperator{\Arg}{Arg}

\DeclareMathOperator{\cratio}{cr}

\DeclareMathOperator{\Ker}{Ker}

\DeclareMathOperator{\Real}{Re}
\DeclareMathOperator{\Imaginary}{Im}
\DeclareMathOperator{\grad}{grad}

\DeclareMathOperator{\hol}{hol}
\DeclareMathOperator{\Hom}{Hom}
\renewcommand{\Re}{\Real}
\renewcommand{\Im}{\Imaginary}
\def\C{\mathbb{C}}

\title{Quadratic differentials and circle patterns on complex projective tori}
\author{Wai Yeung Lam}
\thanks{This work was partially supported by the ANR/FNR project SoS, INTER/ANR/16/11554412/SoS,
	ANR-17-CE40-0033.}
\address{Mathematics Research Unit, Université du Luxembourg, L-4364 Esch-sur-Alzette} \email{wai.lam@uni.lu}
\begin{document}
		
\begin{abstract}
Given a triangulation of a closed surface, we consider a cross ratio system that assigns a complex number to every edge satisfying certain polynomial equations per vertex. Every cross ratio system induces a complex projective structure together with a circle pattern. In particular, there is an associated conformal structure. We show that for any triangulated torus, the projection from the space of cross ratio systems with prescribed Delaunay angles to the Teichm\"{u}ller space of the closed torus is a covering map with at most one branch point. Our approach is based on a notion of discrete holomorphic quadratic differentials.
\end{abstract}

\maketitle

\section{Introduction} \label{sec:introduction}

Discrete differential geometry concerns structure-preserving discretizations in differential geometry \cite{Bobenko2008}. Its goal is to establish a discrete theory with rich mathematical structures such that the smooth theory arises in the limit of refinement. It has stimulated applications in computational architecture and computer graphics. To obtain a structured discrete theory, a main challenge is to decide on the right properties to be preserved under discretization. 

A prominent example in discrete conformal geometry is Thurston's circle packing \cite{Stephenson2005}. In the classical theory, holomorphic functions are conformal, mapping infinitesimal circles to themselves. Instead of infinitesimal size, a circle packing is a configuration of finite-size circles where certain pairs are mutually tangent. Thurston proposed regarding the map induced from two circle packings with the same tangency pattern as a discrete holomorphic function. Using machinery from hyperbolic 3-manifolds, a discrete analogue of the Riemann mapping is deduced from Koebe-Andreev-Thurston theorem. Rodin and Sullivan \cite{Rodin1987} showed that it converges to the classical Riemann mapping in the case of hexagonal circle packings as the mesh size tends to zero. By considering general combinatorics instead of restricting to the hexagonal mesh, circle packings can approximate quasi-conformal maps proving the measurable Riemann mapping theorem \cite{He1990,Williams2019}. A natural question is how this theory can be extended to Riemann surfaces and how the convergence depends on the combinatorics.

On the other hand, computer scientists have been using circle packings to approximate conformal maps between surfaces for years \cite{Boris2006}. Among many numerical schemes, circle packings have the advantage of its discrete nature ready for numerical computation and its effective visualization of conformal stretching. It is believed that in the limit of a suitable refinement, the map induced from circle packings would converge to a classical conformal map. It motivates a systematic study of the interplay between circle packings, combinatorics and conformal structures on surfaces.

From the viewpoint of discrete differential geometry, it is advantageous to develop a structured discrete theory that relates successful examples like circle packings. In the previous works \cite{Lam2016, Lam2017, Lam2015a}, we developed a notion of discrete holomorphic quadratic differentials from circle packings and connected it to several discrete theories: discrete harmonic functions, discrete integrable systems and discrete minimal surfaces in space. It is intriguing if this notion could be related to the classical Teichm\"{u}ller theory as like as its smooth counterpart, e.g. parameterizing the space of complex projective structures on a Riemann surface via the Schwarzian derivative.

This article investigates circle patterns on surfaces with complex projective structures, where circle patterns with prescribed intersection angles play a role of a fixed discrete conformal structure. A circle pattern in the plane is simply a realization of a planar graph such that each face has a circumcircle passing through the vertices. Any two circles from adjacent faces thus intersect at a certain angle. For circle patterns on surfaces, they can be formulated in terms of an algebraic system as follows: We denote $M=(V, E, F)$ a triangulation of a closed oriented surface where $V$, $E$ and $F$ are the sets of vertices, edges and faces respectively. Vertices are denoted by $i,j,k$. An unoriented edge is denoted by $\{ij\}=\{ji\}$ indicating its end points are vertices $i$ and $ j$, where $i=j$ is allowed and in that case the edge has to form a non-contractable loop on the surface. Given a realization $z:V \to \mathbb{C}\cup\{\infty\}$ on the Riemann sphere, we associate a complex cross ratio to every common edge $\{ij\}$ shared by triangles $\{ijk\}$ and $\{jil\}$:
\[
cr_{ij} :=  -\frac{(z_k - z_i)(z_l -z_j)}{(z_i - z_l)(z_j - z_k)}
\]
which encodes how the circumdisk of triangle $z_iz_jz_k$ is glued to that of $z_jz_iz_l$. It defines a function  $\cratio: E  \to \mathbb{C}$ satisfying certain polynomial equations:
\begin{definition}\label{def:crsys}
	Suppose $M=(V,E,F)$ is a triangulation of a closed oriented surface. 
	A cross ratio system on $M$ is an assignment $\cratio:E \to \mathbb{C}$ such that for every vertex $i$ with adjacent vertices numbered as $1$, $2$, ..., $n$ in the clockwise order counted from the link of $i$,
	\begin{gather}
	 \Pi_{j=1}^n \cratio_{ij} =1  \label{eq:crproduct}\\
	 \cratio_{i1} + \cratio_{i1} \cratio_{i2} + \cratio_{i1}\cratio_{i2}\cratio_{i3} + \dots +  \cratio_{i1}\cratio_{i2}\dots\cratio_{in} =0 \label{eq:crsum}
	\end{gather}
	where $\cratio_{ij} = \cratio_{ji}$. Equivalently,
	\[
	\Pi_{j=1}^n \left( \begin{array}{cc}
	\cratio_{ij} & 1 \\
	0 & 1
	\end{array} \right) = \left( \begin{array}{cc}
	1 & 0 \\
	0 & 1
	\end{array} \right).
	\]
The cross ratio system is called \underline{Delaunay} if

\noindent
(a) the argument of the cross ratios $\Arg \cratio$ takes value in $[0, \pi )$,

\noindent
(b) the graph  $(V, E_+)$ is the 1-skeleton of a CW decomposition of $M$ where  \[E_{+}=\{ ij  \in E|  \Arg \cratio_{ij} \neq 0 \}.\]

\noindent
Furthermore  the ramification index $s:V\to \mathbb{N}$ is defined by
\[
2\pi s_i =  \sum_j \Arg \cratio_{ij}.
\] 
Without further notice, we focus on cross ratio systems without branch points (i.e. $s\equiv 1$).
\end{definition}

A cross ratio system provides a recipe to glue neighboring circumdisks, which resembles to Thurston's equations for gluing ideal hyperbolic polyhedra \cite{Thurston1982}. Equations \eqref{eq:crproduct} and \eqref{eq:crsum} ensures that the holonomy around each vertex under the gluing construction is trivial and have been considered by Fock and Gonchrov \cite{Fock1993,Fock2006}. Notice that cross ratios are invariant under complex projective transformations (i.e. M\"{o}bius transformations). One can deduce that every Delaunay cross ratio system defines a complex projective structure on the closed surface $M$ (see Section \ref{sec:cp1}). It yields a forgetful map $f: \mathcal{D} \to P(M)$ from the space of all Delaunay cross ratio systems $\mathcal{D}$ to the space of all marked complex projective structures $P(M)$.

In this article, we are interested in a subset of cross ratio systems that have prescribed arguments. These correspond to circle patterns where neighbouring circles intersect at prescribed angles, which is a natural generalization of circle packings \cite{Lam2017}.

\begin{definition} Suppose $M$ is a closed triangulated surface. A Delaunay angle structure is an assignment of angles $\Theta:E \to [0,\pi)$ with $\Theta_{ij}=\Theta_{ji}$ satisfying the following:   
	\begin{enumerate}[(i)]
		\item For every vertex $i$,  \[ \sum_j \Theta_{ij} = 2\pi\] where the sum is taken over the neighboring vertices of $i$ on the universal cover.
		\item For any collection of edges $(e_0,e_1,e_2,\dots,e_n=e_0)$ whose dual edges form a simple closed contractable path on the surface, then
		\[
		\sum_{i=1}^{n} \Theta_{e_i} > 2\pi
		\]
		unless the path encloses exactly one primal vertex.
	\end{enumerate}
  We write $P(\Theta)\subset \mathcal{D}$ the space of all Delaunay cross ratio systems $\cratio$ with $\Arg \cratio \equiv \Theta$.
\end{definition}    

 The space $P(\Theta)$ consists of all circle patterns on complex projective surfaces with intersection angles $\Theta$ and is known to be nonempty. By the discrete uniformization theorem  \cite{Bobenko2004, Rivin}, $P(\Theta)$ contains exactly one cross ratio system whose underlying complex projective structure can be reduced to an Euclidean structure if $g=1$ and a hyperbolic structure if $g>1$. 

It is known that every complex projective structure induces a developing map of the universal cover to the Riemann sphere. Its holonomy yields a representation of the fundamental group in $PSL(2,\mathbb{C})$ up to conjugation, whose quotient \[
\mathcal{X}(M):= \Hom( \pi_1(M), PSL(2,\mathbb{C}))\sslash PSL(2,\mathbb{C}) 
\]
in the sense of geometric invariant theory is called a character variety. We have a holonomy map
\[
 \hol: P(M) \to \mathcal{X}(M).
\]
 In the case of complex projective tori, $\hol(P(M))$ in $\mathcal{X}(M)$ is smooth everywhere except at the image of Euclidean structures. 

\begin{theorem}\label{thm:holo} For any Delaunay angle structure $\Theta$ on a torus,  $P(\Theta)$ is a real analytic surface homeomorphic to $\mathbb{R}^2$. Furthermore,  the holonomy map
	\[ \hol\circ f: P(\Theta) \to \mathcal{X}(M)\]
	is an embedding which passes through the image of Euclidean tori exactly once. Particularly the forgetful map $f|_{P(\Theta)}$ is an embedding into the space of marked complex projective structures $P(M)$.
\end{theorem}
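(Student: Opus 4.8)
The plan is to analyze $P(\Theta)$ infinitesimally, identify its tangent spaces with discrete holomorphic quadratic differentials, and then transport this local model to the character variety through a period pairing, upgrading the resulting local statement to a global embedding by properness and a rigidity argument. First I would fix $\Arg\cratio\equiv\Theta$ and coordinatize a system by the real log-moduli $r_{ij}:=\log|\cratio_{ij}|$, so that \eqref{eq:crproduct} and \eqref{eq:crsum} become real analytic equations in $r$. Because the arguments are frozen, the admissible variation $\delta\cratio_{ij}/\cratio_{ij}=\delta r_{ij}$ is real, whence the linearization of \eqref{eq:crproduct} at a system $\cratio\in P(\Theta)$ is the single real vertex condition $\sum_j\delta r_{ij}=0$, while the linearization of \eqref{eq:crsum} supplies the complementary vertex relation; together these are exactly the defining equations of a discrete holomorphic quadratic differential based at $\cratio$. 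A naive count on the torus ($|E|=3|V|$ moduli against the per-vertex relations) is balanced, and the global dependencies among the vertex equations coming from the closedness of the surface lower the rank by two; invoking the dimension computation for discrete holomorphic quadratic differentials on the torus (the discrete analogue of the one complex-dimensional space of holomorphic quadratic differentials on an elliptic curve), the kernel is real two-dimensional at every point. Constant rank together with the real analytic implicit function theorem then shows that $P(\Theta)$ is a real analytic surface whose tangent space is canonically the space of discrete holomorphic quadratic differentials.

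Next I would prove that $\hol\circ f$ is an immersion. Its derivative sends a tangent vector, i.e. a discrete holomorphic quadratic differential $q$, to the infinitesimal deformation of the holonomy, a class in the twisted cohomology $H^1(\pi_1(M),\mathfrak{sl}(2,\C))$ with coefficients twisted by $\mathrm{Ad}\,\rho$. The key step is a discrete reciprocity law—integration by parts over the triangulated torus—writing the two periods of this class along the generators of $\pi_1(M)=\mathbb{Z}^2$ as a pairing of $q$ against the developing data; I would show this pairing is nondegenerate, so that a $q$ producing a trivial (coboundary) deformation must vanish. This is the discrete counterpart of the classical fact that the holonomy of complex projective structures is a local homeomorphism onto the character variety. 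The delicate case is the unique Euclidean system, where $\rho$ is reducible and the dimension of $H^1$ jumps: here I would verify directly that the period pairing stays nondegenerate, so that $\hol\circ f$ remains an immersion even though the companion projection to Teichm\"{u}ller space acquires a branch point.

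Finally, to pass from immersion to embedding I would establish properness: if a sequence in $P(\Theta)$ leaves every compact set then some modulus $|\cratio_{ij}|$ tends to $0$ or $\infty$, and I would show that a trace of a generator of the holonomy then blows up, so the image leaves every compact subset of $\mathcal{X}(M)$. A proper injective immersion of a surface is a closed embedding, and injectivity I would obtain from rigidity: two systems in $P(\Theta)$ with conjugate holonomy have developing maps agreeing up to a single M\"{o}bius transformation on each cell, and matching these across the fixed combinatorics and angles $\Theta$ forces, via a discrete maximum principle for the moduli, the two systems to coincide. Connectedness of $P(\Theta)$ together with properness then identifies it with a closed surface in $\mathcal{X}(M)$; comparing with the proper branched covering of the contractible space $\mathcal{T}(M)\cong\mathbb{R}^2$ shows $P(\Theta)$ is simply connected and noncompact, hence homeomorphic to $\mathbb{R}^2$. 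Since a system in $P(\Theta)$ has Euclidean holonomy precisely when it is a Euclidean cross ratio system, and the discrete uniformization theorem provides exactly one such system, the embedded image meets the Euclidean locus exactly once. Because $\hol\circ f$ is injective, so is $f|_{P(\Theta)}$, which is the last assertion.

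I expect the principal obstacle to be the nondegeneracy of the period pairing in the third paragraph, and especially its persistence at the Euclidean point, where the holonomy becomes reducible and the twisted cohomology changes dimension; controlling this transition, together with the discrete maximum principle needed for global injectivity, are the two places where the argument is most likely to be technical.
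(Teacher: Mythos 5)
Your first paragraph reproduces the paper's local analysis faithfully: $P(\Theta)$ is identified with $(\Phi\circ\exp_\Theta)^{-1}\{0\}$, the kernel of the Jacobian is exactly $Q^{\mathbb{R}}(\cratio)$, and constant rank of dimension $2$ gives the real analytic surface (Theorem \ref{prop:kernel} and Corollary \ref{cor:realsurface}). One caveat: the dimension count you describe only delivers the \emph{lower} bound $\dim Q^{\mathbb{R}}(\cratio)\geq |E|-3|V|+2=2$ (Lemma \ref{lem:g1lower}, where the two extra relations come from the affine equivariance of $q_{ij}/(z_j-z_i)$). The upper bound $\dim Q^{\mathbb{R}}(\cratio)\leq 2$ is the harder half and cannot be "invoked''; the paper proves it by passing to the associated discrete harmonic functions, controlling their periods under deck transformations, and applying the maximum principle for the cotangent Laplacian (Lemmas \ref{lem:periodicu}--\ref{lem:g1upper}). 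As stated, your outline assumes the dimension it needs to establish.

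The more serious gaps are in the global part, precisely where the paper leans on Rivin's variational theorem (Proposition \ref{prop:rivin}) and you do not. Rivin's existence-and-uniqueness statement parameterizes the affine structures with $\Arg\cratio\equiv\Theta$ globally by $(A_1,A_2)=(\log|\alpha_1|,\log|\alpha_2|)\in\mathbb{R}^2$; from this the paper gets, in one stroke, connectedness of $P(\Theta)$, the homeomorphism with $\mathbb{R}^2$ (as the $2$-to-$1$ quotient $(A_1,A_2)\sim(-A_1,-A_2)$ branched at the Euclidean point), global injectivity of $\hol\circ f$ (two systems with conjugate holonomy have, after interchanging fixed points, the same $(A_1,A_2)$, hence coincide by uniqueness), and the "exactly once'' claim. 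Your substitute has two unfilled holes. First, the nondegeneracy of the period pairing on $H^1(\pi_1(M),\mathfrak{sl}(2,\mathbb{C}))$, which you yourself flag as the principal obstacle, is never established and is nowhere in the paper; the only pairing argument the paper runs (Proposition \ref{prop:injective}, via the discrete Riemann bilinear identity and a Dirichlet energy comparison) concerns the projection to Teichm\"{u}ller space, explicitly fails at the Euclidean point, and is used for Theorem \ref{thm:covering}, not for this theorem. Second, connectedness of $P(\Theta)$ is asserted but not proved, and without it neither your covering-space derivation of $P(\Theta)\cong\mathbb{R}^2$ nor the statement that the Euclidean locus is met \emph{exactly} once (rather than once per component) follows; note also that deducing the topology from the covering over $\mathcal{T}(M)$ inverts the paper's logical order, since Theorem \ref{thm:covering} is proved after and partly from Theorem \ref{thm:holo}. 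Your injectivity-via-maximum-principle step is sound in spirit --- it is essentially the paper's Appendix, which compares circumradius ratios at a maximal face --- but there it is offered only as an alternative to, not a replacement for, Rivin's uniqueness.
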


We denote $\mathcal{T}(M)$ the Teichm\"{u}ller space, i.e. the space of marked conformal structures. If $M$ is a torus, $\mathcal{T}(M)$ is a manifold homeomorphic to $\mathbb{R}^2$. We further write $\pi: P(M) \to \mathcal{T}(M)$ from the space of complex projective structures to the Teichm\"{u}ller space.

\begin{theorem} \label{thm:covering} For a Delaunay angle structure $\Theta$ on a torus, the projection \[ \pi\circ f:P(\Theta)\backslash \{\cratio_0\} \to  \mathcal{T}(M)\backslash \{\tau_0\}\] is a finite-sheet covering map where $\cratio_0$ is the unique cross ratio system in $P(\Theta)$ that induces an Euclidean torus and $\tau_0$ is the associated conformal structure. In particular, the projection \[ \pi\circ f:P(\Theta)\to  \mathcal{T}(M)\] is a covering map with at most one branch point. 
\end{theorem}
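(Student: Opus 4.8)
The plan is to show that the real-analytic map $g := \pi\circ f\colon P(\Theta)\to\mathcal{T}(M)$ between the two surfaces $P(\Theta)\cong\mathbb{R}^{2}$ (Theorem \ref{thm:holo}) and $\mathcal{T}(M)\cong\mathbb{R}^{2}$ is a proper map which is a local diffeomorphism on $P(\Theta)\setminus\{\cratio_{0}\}$, and then to extract the covering statement by a purely topological argument. Since $g(\cratio_{0})=\tau_{0}$ by definition, once properness and the local-diffeomorphism property are in hand I would argue as follows. Writing $g^{-1}(\tau_{0})=\{\cratio_{0},p_{1},\dots,p_{k}\}$, a finite set by properness in which every $p_{i}\neq\cratio_{0}$ is a point where $g$ is a local diffeomorphism, the restriction $g\colon \mathbb{R}^{2}\setminus g^{-1}(\tau_{0})\to\mathbb{R}^{2}\setminus\{\tau_{0}\}$ is a proper local homeomorphism onto a connected surface, hence a finite covering. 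Its total space is connected and homotopy equivalent to $\bigvee_{k+1}S^{1}$, so has first Betti number $k+1$; but a connected finite covering of $\mathcal{T}(M)\setminus\{\tau_{0}\}\simeq S^{1}$ has first Betti number $1$, forcing $k=0$ and $g^{-1}(\tau_{0})=\{\cratio_{0}\}$. This already gives that $\pi\circ f\colon P(\Theta)\setminus\{\cratio_{0}\}\to\mathcal{T}(M)\setminus\{\tau_{0}\}$ is a finite covering; identifying it with the degree-$d$ cyclic covering $z\mapsto z^{d}$ of the punctured plane and filling in the punctures shows that $\pi\circ f$ is a branched covering of $\mathbb{R}^{2}$ whose only possible branch point is $\cratio_{0}$, i.e. it has at most one branch point.

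It remains to establish the two analytic inputs. For the local-diffeomorphism claim I would compute the kernel of $dg_{\cratio}=d\pi\circ df$. Because $f$ is an embedding (Theorem \ref{thm:holo}) and $\ker d\pi$ is the tangent space to the fibre of $\pi\colon P(M)\to\mathcal{T}(M)$, which is the space $H^{0}(K^{2})\cong\mathbb{C}$ of holomorphic quadratic differentials on the underlying Riemann surface, a tangent vector $v\in T_{\cratio}P(\Theta)$ lies in $\ker dg_{\cratio}$ exactly when $df(v)$ is the infinitesimal deformation of the complex projective structure associated with a holomorphic quadratic differential. By the correspondence developed in the earlier sections, such $v$ are precisely the discrete holomorphic quadratic differentials on $(M,\cratio)$ compatible with the fixed angle structure $\Theta$. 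The heart of this step is therefore a vanishing statement: for $\cratio\neq\cratio_{0}$ there is no nonzero such discrete holomorphic quadratic differential, so $dg_{\cratio}$ is injective and, by equality of real dimensions, an isomorphism; at the Euclidean point $\cratio_{0}$ the distinguished differential (the discrete analogue of $dz^{2}$ on the flat torus) accounts for the possible branching.

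The main obstacle is properness of $g$. Since $P(\Theta)$ and $\mathcal{T}(M)$ are each homeomorphic to $\mathbb{R}^{2}$, properness amounts to showing that if the conformal structures $g(\cratio_{n})$ stay in a compact part of $\mathcal{T}(M)$ then, after passing to a subsequence, $\cratio_{n}$ converges in $P(\Theta)$. As the arguments $\Arg\cratio_{n}\equiv\Theta$ are fixed, only the moduli $|\cratio_{n}|$ can degenerate, and I would rule this out by a compactness/rigidity argument for circle patterns with prescribed intersection angles: a degeneration of the edge moduli to $0$ or $\infty$ must pinch or collapse the circle pattern and thereby drive the underlying conformal structure to the boundary of Teichm\"{u}ller space, contradicting the assumed compactness. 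Here I expect to use the embedding $\hol\circ f$ of Theorem \ref{thm:holo}, controlling the holonomy eigenvalues by the modulus so that boundedness of $g(\cratio_{n})$ away from $\tau_{0}$ keeps $\hol(f(\cratio_{n}))$ in a compact subset of $\mathcal{X}(M)$ and hence $\cratio_{n}$ in a compact subset of $P(\Theta)$. Turning this degeneration analysis into the required a priori bounds is the technical crux of the proof.
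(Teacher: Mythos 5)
Your overall architecture coincides with the paper's: show that $d(\pi\circ f)$ is injective away from $\cratio_0$, show properness, and conclude that a proper local homeomorphism onto a connected surface is a finite-sheet covering. Your Betti-number argument identifying $(\pi\circ f)^{-1}(\tau_0)=\{\cratio_0\}$, and hence locating the only possible branch point, is a nice explicit supplement to what the paper leaves implicit. But both analytic pillars of the argument are left as acknowledged placeholders ("the heart of this step", "the technical crux"), and these are precisely where the content of the theorem lies, so as written the proposal is a correct skeleton rather than a proof.

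Concretely: for injectivity you correctly reduce to the vanishing statement that no nonzero discrete holomorphic quadratic differential at a non-Euclidean $\cratio$ preserves the conformal structure, but you offer no mechanism for proving it. The paper's Proposition \ref{prop:injective} converts such a $q$ into a discrete harmonic function $u$ on the universal cover whose periods are $\Im(\dot c)$, $\Im(\dot c\tau)$ and whose conjugate periods are $-\Re(\dot c)$, $-\Re(\dot c\tau)$; the discrete Riemann bilinear identity of Bobenko--Skopenkov gives $\mathcal{E}_T(u)=|\dot c|^2\Im\tau$, which is then compared with the smooth harmonic function $\Re\bigl(-\mathbf{i}\tfrac{\dot c}{c}\log z\bigr)$ having the same periods --- the \emph{strict} minimizer of the classical Dirichlet energy --- forcing $\dot c=0$. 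Nothing in your sketch points toward this or any substitute. For properness, your heuristic that degenerating edge moduli "pinch the pattern and push $\tau$ to the boundary" is not the paper's argument and is close to circular: the assertion that bounded conformal structure keeps the holonomy compact is essentially the statement to be proved. The paper's Proposition \ref{prop:proper} instead uses Rivin's parametrization of $P(\Theta)$ by $(A_1,A_2)=(\Re h_1,\Re h_2)$, the a priori combinatorial bound $|\Im h_r|\le n\pi$ on the turning angles along a dual loop (Lemma \ref{lem:bounded}), and the identity $h_2=\tau h_1$, from which $A_1=(\Im h_2-\Re\tau\,\Im h_1)/\Im\tau$ is bounded whenever $\tau$ stays in a compact subset of the upper half-plane. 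Without supplying these two inputs, the covering-space conclusion does not follow.
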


Both the proofs of Theorems \ref{thm:holo} and \ref{thm:covering} rely on Rivin's results in \cite{Rivin} and our notion of discrete holomorphic quadratic differentials interpreted as tangent vectors of $P(\Theta)$ (See Section \ref{sec:hqd}).

It remains a conjecture whether $\pi\circ f|_{P(\Theta)}$ is a diffeomorphism for the torus and hence $f(P(\Theta))$ is a section of the fiber bundle $\pi: P(M) \to \mathcal{T}(M)$. The analogues of Theorems \ref{thm:holo} and \ref{thm:covering}  for surfaces with genus $g>1$ are still open. In particular it is interesting to investigate whether $f(P(\Theta))$ becomes tangential to the fiber of $\pi$ under a triangulation refinement, which would imply the convergence of discrete holomorphic quadratic differentials to their smooth counterpart.

Circle patterns on tori have appeared in various forms \cite{Sass2012}. Doyle's spiral circle packings \cite{Beardon1994} as a discrete analogue of exponential functions can be seen as a developing map of a circle packing on an affine torus, which is further extended to discretize Painlevé equations from integrable systems \cite{Agafonov2000}. On the other hand, recently we established a correspondence between circle patterns on tori and the dimer models from statistical mechanics \cite{fKen2018}. It is intriguing how they would interact with the underlying conformal structures. See Figure \ref{fig:circlepatterns} for examples of circle patterns on the torus.

Our result also indicates that circle patterns on surfaces play a role of discrete quasi-conformal map. Fixing two conformal structures on the torus and Delaunay angles, Theorem \ref{thm:covering} indicates that there always exists circle patterns on complex projective tori with the prescribed conformal structures. Under a suitable refinement of the triangulation, the map induced by such circle patterns could converge to a quasi-conformal map. It would be interesting to explore how the triangulations affect the Beltrami coefficient, which measures the conformal distortion of the resulting quasi-conformal map.

The organization of the paper is as follows: In Section \ref{sec:background}, we explain the derivation of cross ratio systems and the Delaunay condition. We further introduce discrete holomorphic quadratic differentials as tangent vectors of $P(\Theta)$ and recall their connections to discrete harmonic functions from previous results. In Section \ref{sec:cp1tori}, we focus on the torus. Using complex affine developing maps, we simplify the equations for cross ratio systems. We then apply discrete harmonic functions and Rivin's results to deduce Theorems \ref{thm:holo} and \ref{thm:covering}. In Section \ref{sec:examples}, we explore a couple of examples and show how the above theorems fail for non-Delaunay cross ratio systems. In Section \ref{sec:discussion}, we discuss some open problems.
\subsection{Related work}

Our work is closely related to Kojima, Mizushima and Tan \cite{Kojima2003} who proposed to consider circle packings on surfaces with complex projective structures. They conjectured that this configuration space is a manifold whose projection to the Teichm\"{u}ller space is a diffeomorphism. Here we outline the progress so far:

\begin{enumerate}[(i)]
	\item Torus ($g=1$): Mizushima \cite{Mizushima2000} proved that for the one-vertex triangulation, the projection of the space of circles packings on complex projective tori to the Teichm\"{u}ller space is a diffeomorphism by explicit computation. Kojima, Mizushima and Tan \cite{Kojima2003} showed that in the space of circle packings on any given triangulation, the circle packing on an Euclidean tori is contained in a neighborhood homeomorphic to $\mathbb{R}^2$. However, little is known about the global structure of the configuration space and its projection to the Teichm\"{u}ller space.
		\item Surfaces with genus $g>1$: Kojima, Mizushima and Tan \cite{Kojima2003} similarly showed that in the space of circle packings on any given triangulation, the circle packing on a hyperbolic surface is contained in a neighborhood homeomorphic to $\mathbb{R}^{6g-6}$. Recently, Schlenker and Yarmola \cite{Schlenker2018} showed that in the setting of circle patterns, the projection to the Teichm\"{u}ller space is a proper map. However it remains unknown whether the space of circle patterns with fixed intersection angles is a manifold and whether its projection to the Teich\"{u}ller space is an immersion.
\end{enumerate}

Our contribution is to study the configuration space of Delaunay circle patterns $P(\Theta)$ on tori with arbitrary triangulations and their projection to the Teichm\"{u}ller space. We show that the configuration space is a manifold and its projection is proper everywhere as well as an immersion almost everywhere. Our use of discrete holomorphic quadratic differentials as tangent vectors of $P(\Theta)$ is novel, which could be applicable to surfaces with genus $g>1$. 

\begin{figure}[h!] \centering
	\begin{minipage}{0.49\textwidth}
		\includegraphics[width=1\textwidth]{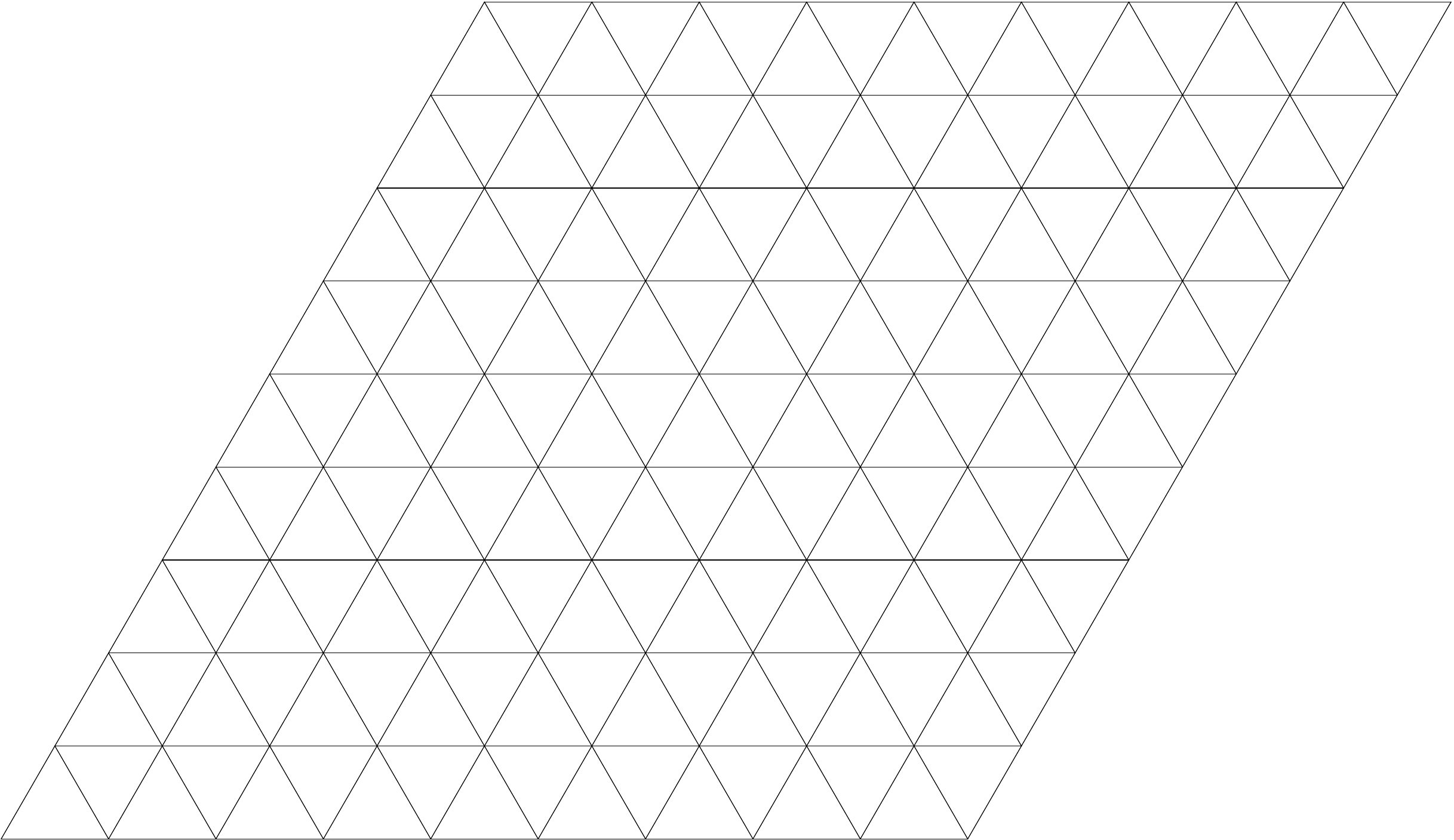}
	\end{minipage}
	\begin{minipage}{0.49\textwidth}
		\includegraphics[width=1\textwidth]{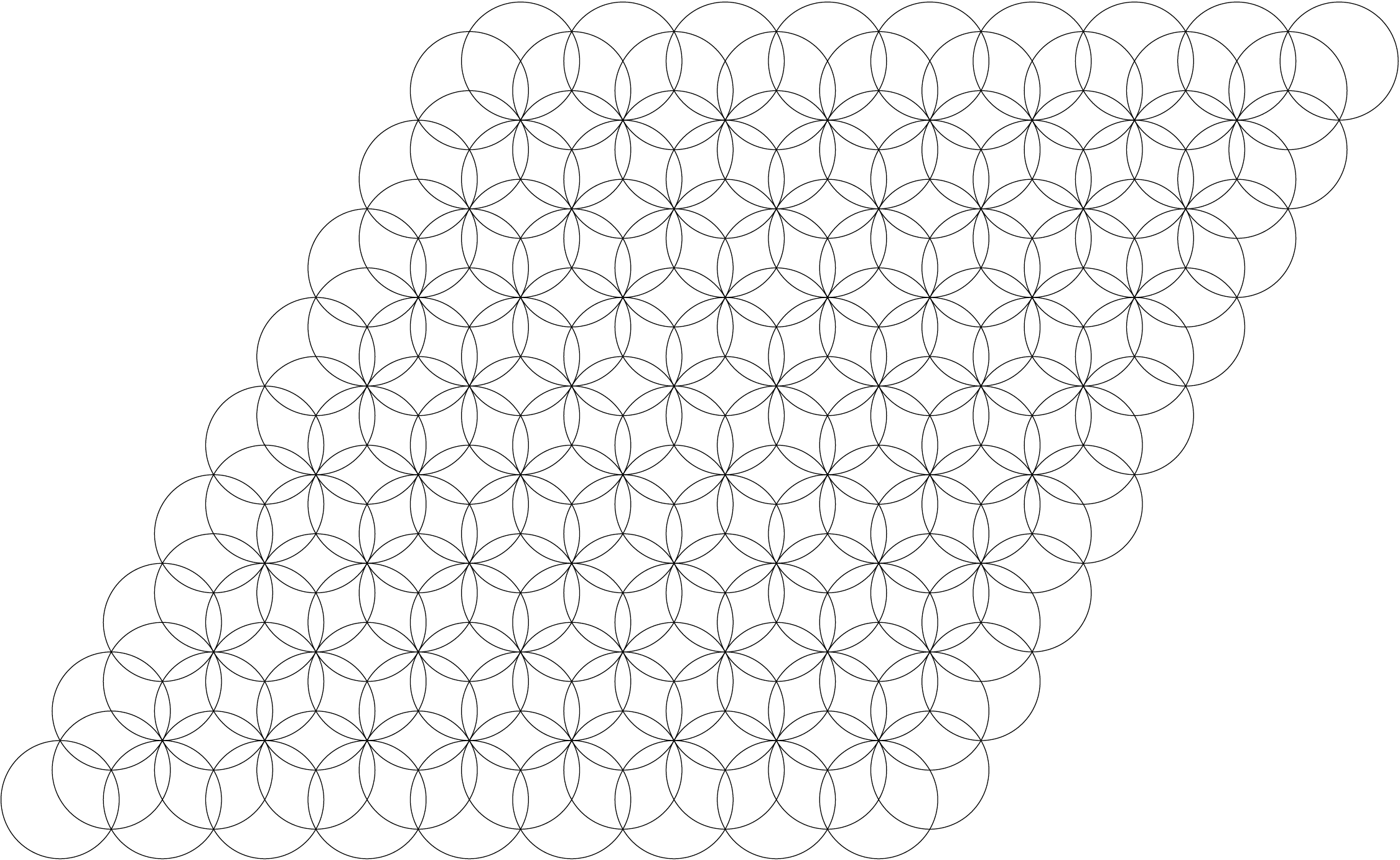}
	\end{minipage}
	\begin{minipage}{0.45\textwidth}
		\includegraphics[width=0.9\textwidth]{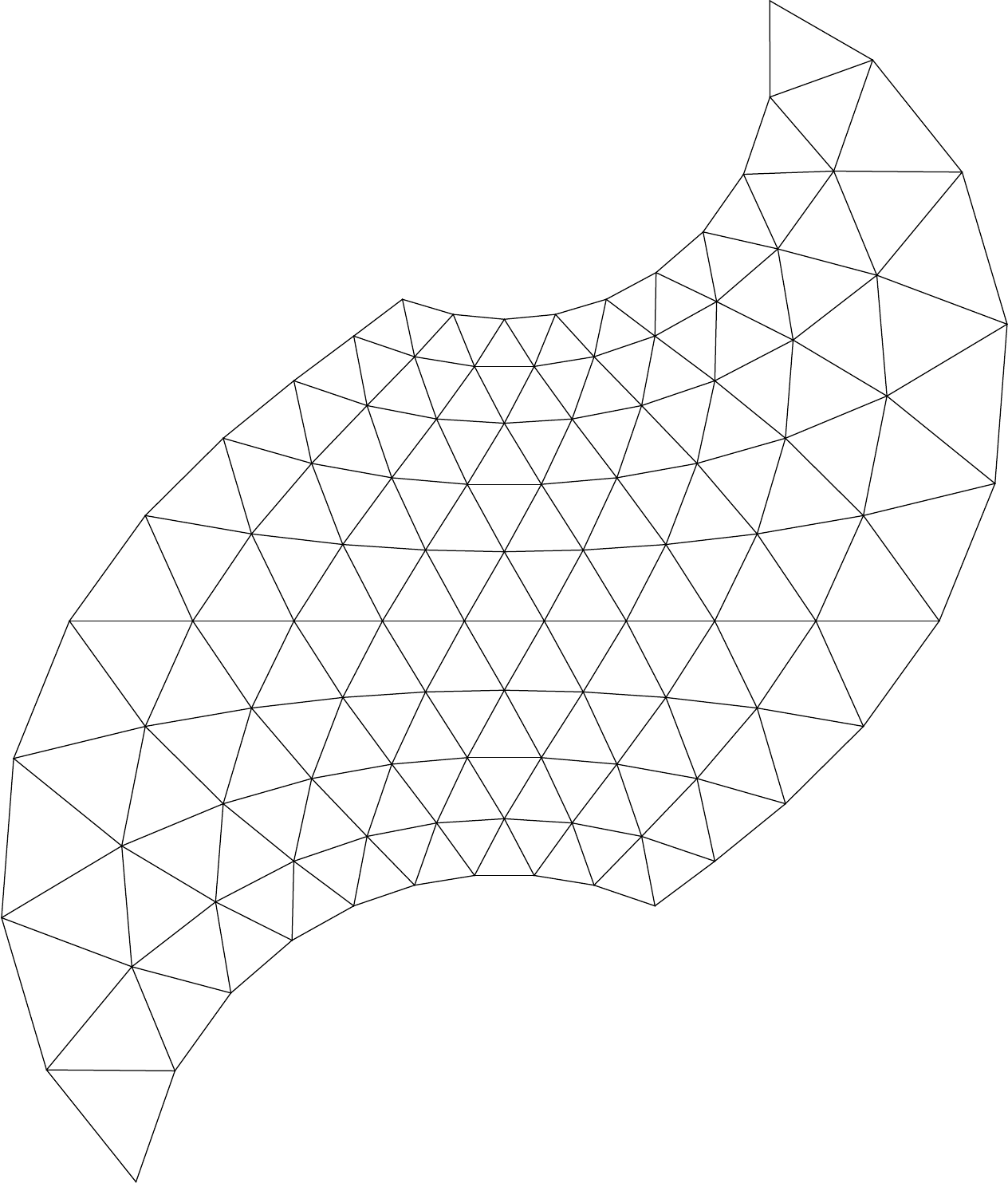}
	\end{minipage}
	\begin{minipage}{0.45\textwidth}
		\includegraphics[width=1\textwidth]{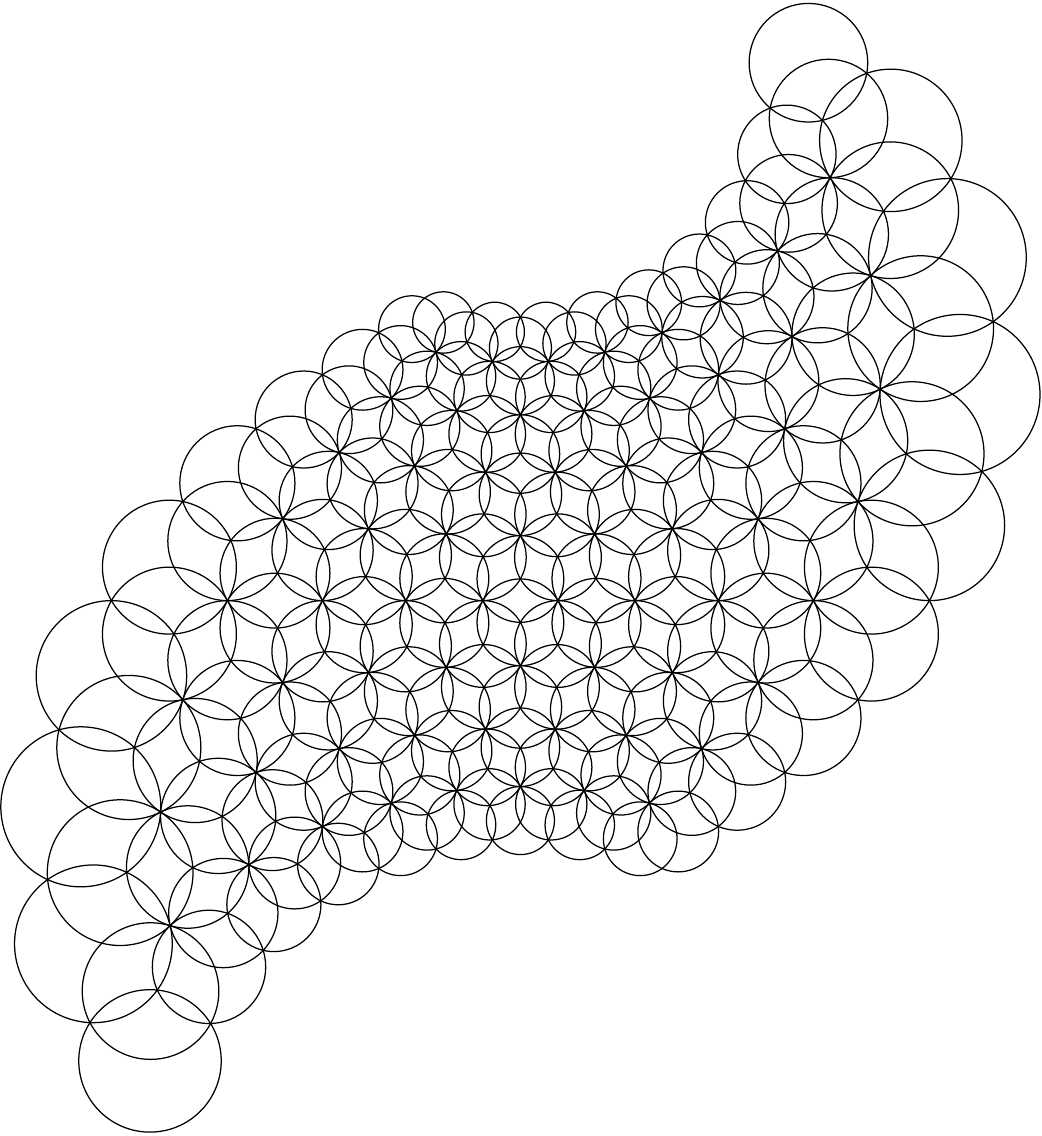}
	\end{minipage}
	\begin{minipage}{0.42\textwidth}
		\includegraphics[width=0.9\textwidth]{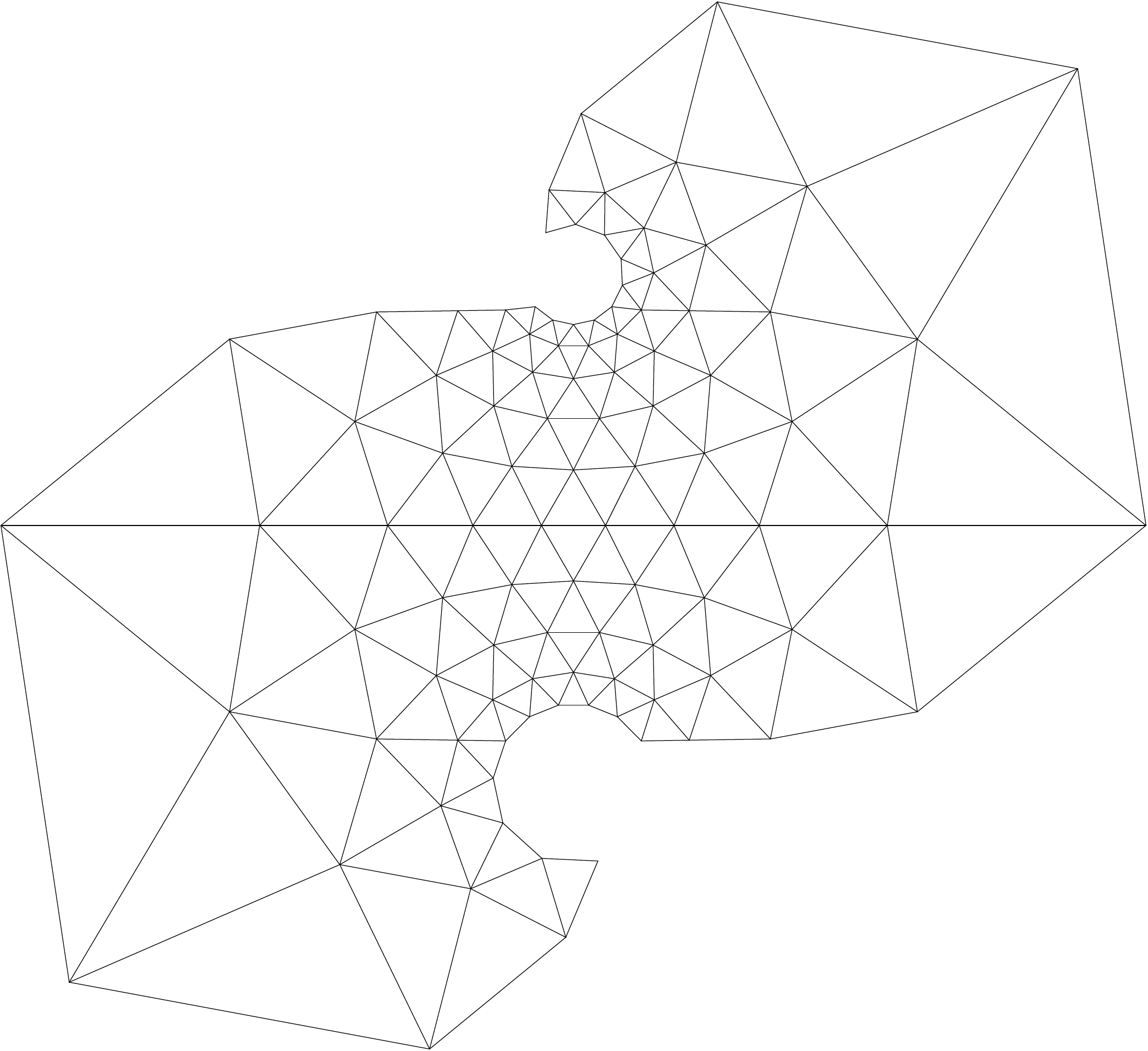}
	\end{minipage}
	\begin{minipage}{0.49\textwidth}
		\includegraphics[width=1\textwidth]{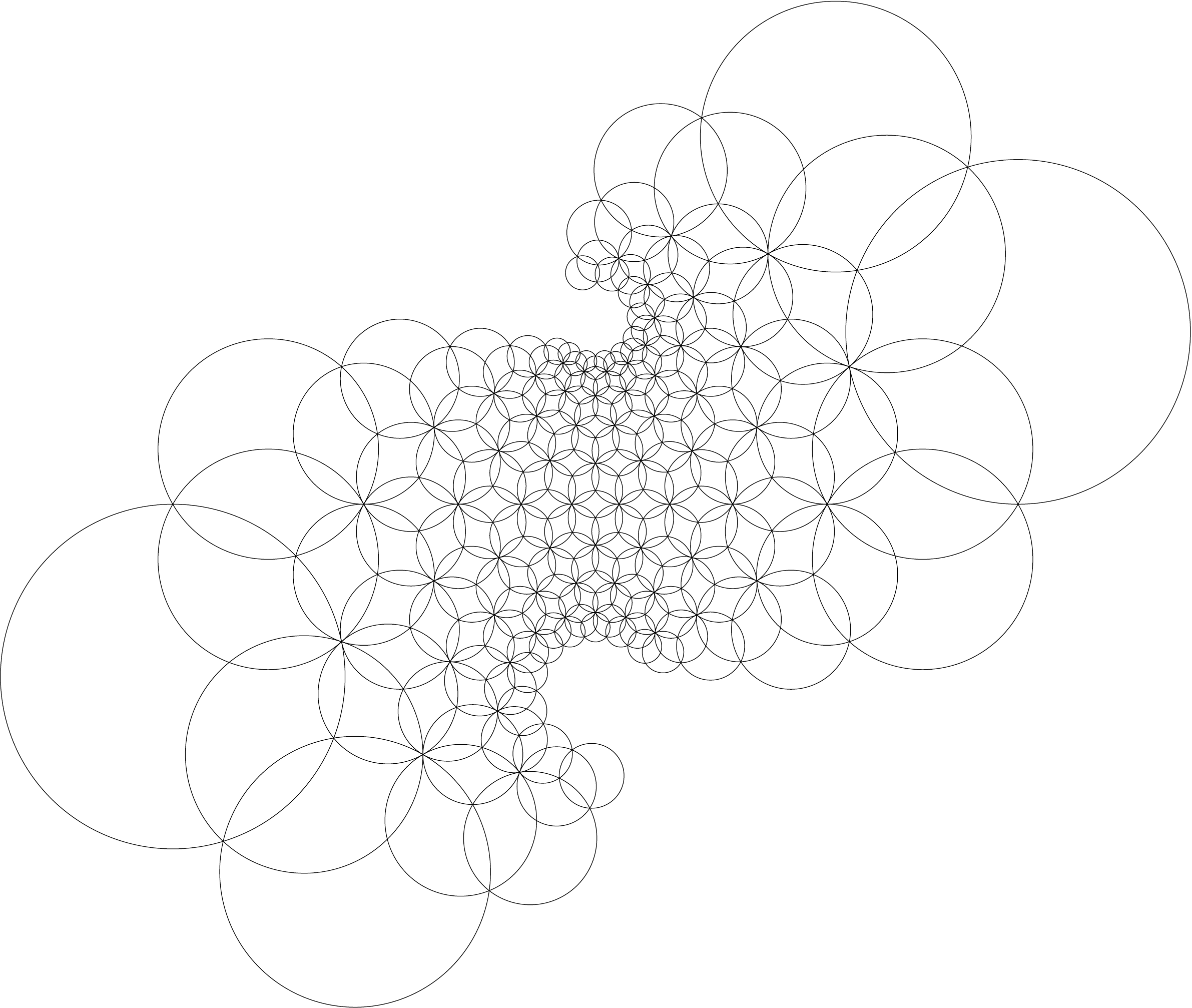}
	\end{minipage}
	\caption{Circle patterns with Delaunay intersection angle $\Theta \equiv \pi/3$ on a triangulated torus. Vertex positions $z$ and combinatorics are shown on the left. Their circumcircles are shown on the right. Notice that not all intersection points are the vertices under consideration. The three tori have different conformal structures. } 
	\label{fig:circlepatterns}
\end{figure}

\section{Background} \label{sec:background}

\subsection{M\"{o}bius transformations} \label{sec:mobius}

Without further notice, we only consider orientation-preserving M\"{o}bius transformations on the Riemann sphere. They are in the form $z \mapsto \frac{a z + b}{c z +d}$ for some $a,b,c,d \in \mathbb{C}$ such that $ad - bc =1$ and are also called complex projective transformations. These transformation are generated by Euclidean motions and inversion $z \mapsto 1/z$. In particular, they are holomorphic, map circles to circles and preserve cross ratios.

\subsection{Developing map and holonomy} \label{sec:develop}

\begin{proposition}
	Given a cross ratio system $\cratio:E \to \mathbb{C}$ on a triangulated surface $M$, we denote $\hat{M} = (\hat{V},\hat{E},\hat{F})$ the universal cover of $M$ with the pull back triangulation and $p$ the covering map. Then there exists a \emph{developing map} $z:\hat{V} \to \mathbb{C}\cup \{ \infty \}$ such that for every edge $ij \in \hat{E}$ shared by triangles $\{ijk\}$ and $\{jil\}$
	\begin{equation}\label{eq:developingmap}
	\cratio_{p(ij)} = -\frac{(z_k - z_i)(z_l -z_j)}{(z_i - z_l)(z_j - z_k)}.
	\end{equation}
	The developing map is unique up to complex projective transformations and has an equivariance property with respect to the fundamental group $\pi_1(M)$: For any $\gamma \in \pi_1(M)$, there exists a complex projective transformation $\rho_{\gamma} \in PSL_2(\mathbb{C})$ such that
	\[
	z \circ \gamma = \rho_{\gamma} \circ z.
	\]
	The map $\gamma \mapsto \rho_{\gamma}$ defines a holonomy representation $\hol$ of the cross ratio system unique up to conjugation by elements in $PSL_2(\mathbb{C})$, i.e. $\hol(\cratio) \in \mathcal{X}(M)$ represents a point in the character variety.
	
	Conversely, every mapping into the the extended complex plane with the equivariance property induces a cross ratio system on $M$ via \eqref{eq:developingmap}.
\end{proposition}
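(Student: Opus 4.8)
The plan is to build the developing map by analytic continuation across the triangulation of the simply connected cover $\hat{M}$, and then to recognize the two vertex equations of Definition \ref{def:crsys} as exactly the local consistency condition for this continuation. First I would fix a base triangle $\{i_0 j_0 k_0\}\in\hat{F}$ and send its three vertices to three arbitrary distinct points of $\mathbb{C}\cup\{\infty\}$. Whenever I cross an edge $\{ij\}$ from a triangle $\{ijk\}$ whose vertices are already placed into the adjacent triangle $\{jil\}$, equation \eqref{eq:developingmap} is Möbius (fractional-linear) in the single unknown $z_l$, so it determines $z_l$ uniquely, and the nonvanishing of the cross ratios keeps the four points distinct so that no step degenerates. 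Propagating along a spanning tree of the dual graph of $\hat{M}$ thus assigns a position to every vertex with no choices made along the way.

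The real content is to show the result is independent of the propagation path. Since $\hat{M}$ is simply connected, the cycle space of its dual graph is generated by the elementary loops encircling single vertices, so it is enough to verify that continuing once around each vertex $i$ returns every neighbor to its original position. I would check this in the normalization $z_i=\infty$: labelling the neighbors $1,\dots,n$ clockwise and setting $d_m:=z_{m+1}-z_m$, equation \eqref{eq:developingmap} collapses to $\cratio_{im}=d_m/d_{m-1}$. Then \eqref{eq:crproduct} is precisely the telescoping identity $\prod_m d_m/d_{m-1}=1$ closing up the multiplicative part, while the closing of the neighbor polygon, $\sum_m d_m=0$, becomes \eqref{eq:crsum} once the nonzero common factor $\cratio_{i1}$ is cleared. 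The matrix form in Definition \ref{def:crsys} packages both: the $(1,1)$ entry of $\prod_{m}\left(\begin{smallmatrix}\cratio_{im} & 1\\ 0 & 1\end{smallmatrix}\right)$ reproduces \eqref{eq:crproduct} and, given that, the vanishing of the $(1,2)$ entry reproduces \eqref{eq:crsum}, so the product equals $I$ exactly when the continuation closes around $i$. Hence $z$ is globally well defined on $\hat{V}$.

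Uniqueness, equivariance, and the converse then follow formally. Any two developing maps coincide after the Möbius transformation matching their base triangles, because the recursion is deterministic once the base data are fixed and a Möbius change of those data again solves \eqref{eq:developingmap}, cross ratios being Möbius invariant. For $\gamma\in\pi_1(M)$ acting by deck transformations, $z\circ\gamma$ satisfies the same system (as $\cratio$ descends along $p$), so by uniqueness $z\circ\gamma=\rho_\gamma\circ z$ for a unique $\rho_\gamma\in PSL_2(\mathbb{C})$; the cocycle relation $\rho_{\gamma\delta}=\rho_\gamma\rho_\delta$ is immediate, and replacing $z$ by $g\circ z$ conjugates $\rho$ by $g$, giving a well-defined class in $\mathcal{X}(M)$. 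Conversely, for an equivariant $z$ the function defined by \eqref{eq:developingmap} descends to $E$ (equivariance together with Möbius invariance of cross ratios) and satisfies \eqref{eq:crproduct}--\eqref{eq:crsum} at each vertex, since single-valuedness of $z$ forces the neighbor polygon to close, which is the computation of the previous paragraph run in reverse.

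The step I expect to be the main obstacle is the consistency argument: making rigorous that the per-vertex equations suffice globally. This requires the topological reduction of dual cycles to vertex loops on the non-compact but simply connected cover $\hat{M}$, together with careful bookkeeping of the $z_i=\infty$ normalization and of the non-degeneracy (distinctness of the four points and nonvanishing of the cross ratios) so that each continuation step and each identity above is genuinely well posed on the Riemann sphere rather than only in an affine chart.
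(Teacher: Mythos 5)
Your proposal is correct and follows essentially the same route as the paper: lay out triangles one by one from a base face, reduce global consistency on the simply connected cover to triviality of the continuation around each single vertex, and verify that closure in the normalization $z_i=\infty$, where the product condition becomes a telescoping identity and the sum condition becomes the closing of the neighbor polygon (this is exactly the content of the paper's Lemmas \ref{lem:cross1} and \ref{lem:cross2}). The uniqueness, equivariance, and converse arguments also match the paper's.
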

\begin{proof}

	To obtain a developing map, we pick a face $\{ijk\}$ of $\hat{M}$ and assign the vertices to three distinct points $z_i,z_j,z_k$. If $\{jil\}$ is a neighboring face (see Fig. \ref{fig:delaunay} left), then $z_l$ is uniquely determined by $ \cratio_{p(ij)} = -\frac{(z_k - z_i)(z_l -z_j)}{(z_j - z_k)(z_i - z_l)}$. In this way, nearby triangles are laid out one by one. However, for the developing to be well defined, we have to make sure such construction is independent of the order of the triangles chosen. Indeed by Lemma \ref{lem:cross2} below, the conditions in Definition \ref{def:crsys} yield that the holonomy of the developing map is trivial around each vertex. 
	
	The developing map $z$ is then uniquely determined as long as some triangle $z_i,z_j,z_k$ is prescribed. If the three vertices take some other distinct values $\tilde{z}_i,\tilde{z}_j, \tilde{z}_k$, then there exists a unique complex projective transformation $\rho$ mapping $z_i,z_j,z_k$ to $\tilde{z}_i,\tilde{z}_j, \tilde{z}_k$ and the developing maps are related by $\tilde{z} = \rho \circ z$.
	
	Conversely, Lemma \ref{lem:cross1} implies that a mapping $z:\hat{V} \to \mathbb{C}\cup \{\infty\}$ with the equivariance property induces a cross ratio system on $M$.
\end{proof}

\begin{lemma} \label{lem:cross1} Given a point $z_i$ and a sequence of numbers $z_0, z_1, ...., z_n=z_0, z_{n+1}=z_1$ in $\C\cup \{\infty\}$ such that $z_i, z_j, z_{j-1}, z_{j+1}$ are distinct for all $j$, we define the cross ratios for $j=1,\dots,n$ by
	\begin{equation*}
	 \cratio_{ij} = -\frac{(z_i-z_{j-1})(z_{j}-z_{j+1})}{(z_{j+1}-z_i)(z_{j-1}-z_j)}
	\end{equation*}
Then 
	\[ \prod_{j=1}^n \cratio_{ij} =1, \quad \sum_{k=1}^n  \prod_{j=1}^k \cratio_{ij} =0.\]
\end{lemma}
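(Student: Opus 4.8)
The plan is to reduce both identities to a single closed-form expression for the partial products $P_k := \prod_{j=1}^k \cratio_{ij}$, from which the product identity and the telescoping sum both fall out immediately. First I would dispose of the points at infinity. Both claimed identities are statements purely about the numbers $\cratio_{ij}$, and each $\cratio_{ij}$ is invariant under post-composing all of $z_i,z_0,\dots,z_n$ with a common Möbius transformation. Hence I may apply such a transformation to assume that every point lies in $\C$; by the distinctness hypothesis this gives $z_m\neq z_i$ for all $m$ and $z_{j-1}\neq z_j$ for all $j$, so every linear factor occurring below is finite and nonzero.

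Next I would compute $P_k$ by regrouping the four families of linear factors in $\prod_{j=1}^k \cratio_{ij}$. The numerator factors $(z_j - z_{j+1})$ telescope against the denominator factors $(z_{j-1}-z_j)$, leaving only $(z_k - z_{k+1})/(z_0 - z_1)$. For the factors $(z_i - z_{j-1})$ and $(z_{j+1}-z_i)$, the indices shared by the two ranges cancel via $(z_i - z_m)/(z_m - z_i) = -1$, leaving the surviving endpoints together with a sign; combined with the overall $(-1)^k$ from the definition of $\cratio$, all signs cancel and one obtains
\[
P_k = \frac{(z_i - z_0)(z_i - z_1)}{z_0 - z_1}\cdot \frac{z_k - z_{k+1}}{(z_k - z_i)(z_{k+1}-z_i)}.
\]
For rigour (and to bypass the empty-range bookkeeping at $k=1,2$) this closed form is cleanest to confirm by a one-line induction on $k$ using $P_{k+1}=P_k\,\cratio_{i,k+1}$, the base case $k=1$ matching the definition directly.

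The punchline is a partial-fraction rewriting. Writing $C := (z_i - z_0)(z_i - z_1)/(z_0 - z_1)$, which is independent of $k$, and using $z_k - z_{k+1} = (z_k - z_i)-(z_{k+1}-z_i)$, the second factor becomes a difference of reciprocals, so that
\[
P_k = C\left(\frac{1}{z_{k+1}-z_i} - \frac{1}{z_k - z_i}\right).
\]
For the product, the periodicity $z_{n+1}=z_1,\ z_n=z_0$ gives $\prod_{j=1}^n\cratio_{ij}=P_n=C\bigl(\tfrac{1}{z_1-z_i}-\tfrac{1}{z_0-z_i}\bigr)$, which simplifies to $\tfrac{(z_i-z_0)(z_i-z_1)}{(z_0-z_i)(z_1-z_i)}=1$. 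For the sum, the expression telescopes:
\[
\sum_{k=1}^n P_k = C\left(\frac{1}{z_{n+1}-z_i} - \frac{1}{z_1 - z_i}\right) = 0,
\]
again because $z_{n+1}=z_1$.

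I expect no genuine obstacle; the entire content lies in spotting the two telescoping cancellations and the partial-fraction form of $P_k$. The only points needing care are the sign bookkeeping and the shared-factor ranges in the two telescoping products, which is why I would pin down the closed form for $P_k$ by induction rather than by a direct count. Once that formula is in hand, both assertions are one line each, and the reduction to the finite case at the start is what makes the linear-factor manipulations legitimate in $\C\cup\{\infty\}$.
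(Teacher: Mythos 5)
Your proof is correct and follows essentially the same route as the paper's: both establish the closed form $\prod_{j=1}^k \cratio_{ij} = \frac{(z_0-z_i)(z_1-z_i)}{z_0-z_1}\cdot\frac{z_k-z_{k+1}}{(z_k-z_i)(z_{k+1}-z_i)}$, rewrite it via partial fractions as a difference of reciprocals, and conclude both identities by telescoping together with $z_n=z_0$, $z_{n+1}=z_1$. Your preliminary M\"obius normalization to keep all points finite and the suggestion to verify the closed form by induction are minor refinements of the same argument.
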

	\begin{proof}
	Under the above assumption, for $k=1,2,\dots,n$
	
	\begin{equation*}\prod_{j=1}^k \cratio_{ij} =\frac{(z_{0}-z_i)(z_1-z_i)}{z_0-z_1} \cdot \frac{z_k-z_{k+1}}{(z_k-z_i)(z_{k+1}-z_i)}= \frac{(z_{0}-z_i)(z_1-z_i)}{z_0-z_1}  (\frac{1}{z_k-z_i} - \frac{1}{z_{k+1}-z_i}) \end{equation*}
	
	\begin{equation*} 
	\sum_{k=1}^n \prod_{j=1}^k \cratio_{ij} =\frac{(z_{0}-z_i)(z_1-z_i)}{z_0-z_1}  (\frac{1}{z_1-z_i} - \frac{1}{z_{n+1}-z_i}) \end{equation*}
	In the special case that  $z_0=z_n$ and $z_1=z_{n+1}$, we obtain the claim.
\end{proof}

\begin{lemma}\label{lem:cross2}Suppose $\cratio_{i1}, ..., \cratio_{in} \in \C$ such that $$\prod_{j=1}^n \cratio_{ij}= 1, \quad \text{and} \quad \sum_{k=1}^n \prod_{j=1}^k \cratio_{ij} =0.$$
	Then there exist $z_i, z_0, z_1, ..., z_n =z_0, z_{n+1}=z_1$  $\in \C \cup \{\infty\}$  such that 
\[
	\cratio_{ij} = -\frac{(z_i-z_{j-1})(z_{j}-z_{j+1})}{(z_{j+1}-z_i)(z_{j-1}-z_j)}
	\]
	 for all $j=1,2,\dots,n$. The vector $(z_i, z_0, z_1, ..., z_n)$ is unique up to $PSL(2, \C)$ action.
\end{lemma}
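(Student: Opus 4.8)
The plan is to run the computation in the proof of Lemma \ref{lem:cross1} backwards, after first using the $PSL(2,\C)$ freedom to normalize the configuration. Since the target expression is a M\"obius-invariant cross ratio, I would begin by sending the special point $z_i$ to $\infty$; rigorously this means working in the chart $w=1/(z-z_i)$, in which $z_i$ becomes the point at infinity and every other point is finite. In this chart the formula collapses: letting $z_i\to\infty$ in
\[
\cratio_{ij} = -\frac{(z_i-z_{j-1})(z_{j}-z_{j+1})}{(z_{j+1}-z_i)(z_{j-1}-z_j)},
\]
the factor $(z_i-z_{j-1})/(z_{j+1}-z_i)$ tends to $-1$, so writing $d_j:=z_{j+1}-z_j$ one gets simply $\cratio_{ij}=d_j/d_{j-1}$. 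Thus, in the normalized chart, reconstructing the points reduces to solving the first-order recurrence $d_j=\cratio_{ij}\,d_{j-1}$ for the consecutive differences.

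The second step is to solve this recurrence and recover the points by summation. Fixing $d_0$ arbitrarily (the residual scaling freedom) gives $d_k=d_0\prod_{j=1}^k \cratio_{ij}$, and then I set $z_0$ arbitrary (the residual translation freedom) with $z_k=z_0+\sum_{m=0}^{k-1}d_m$. The two hypotheses are exactly the two closing-up conditions for the cyclic configuration. The product identity $\prod_{j=1}^n \cratio_{ij}=1$ says $d_n=d_0$, i.e. the difference sequence is periodic, which is precisely $z_{n+1}-z_n=z_1-z_0$. The sum identity $\sum_{k=1}^n \prod_{j=1}^k \cratio_{ij}=0$, after using the product identity to replace the $k=n$ term by $1$, rearranges into $\sum_{k=0}^{n-1}d_k=0$, which forces $z_n=z_0$ and hence $z_{n+1}=z_1$; so the points close up as required. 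I would also note that $\prod \cratio_{ij}=1$ forces every $\cratio_{ij}\neq 0$, hence every $d_j\neq 0$, so consecutive points are distinct and every denominator in the cross-ratio formula is nonzero, making the reconstructed values legitimate. Transporting back through $w=1/(z-z_i)$ yields $z_i,z_0,\dots,z_n\in\C\cup\{\infty\}$.

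For the uniqueness up to $PSL(2,\C)$, I would take any two solutions and apply M\"obius maps sending both special points to $\infty$. In the normalized chart the recurrence $d_j=\cratio_{ij}\,d_{j-1}$ determines the differences up to the single scalar $d_0$, and partial summation then determines the points up to the single constant $z_0$. These two residual parameters are precisely the scaling and translation generating the stabilizer of $\infty$ in $PSL(2,\C)$, so the two normalized configurations differ by an affine map; composing with the two normalizing maps shows the original configurations differ by an element of $PSL(2,\C)$. This matches the degree-of-freedom count, since the choice of $z_i$ together with the pair $(d_0,z_0)$ accounts for all three complex dimensions of $PSL(2,\C)$.

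I expect the only genuine subtlety to be the careful treatment of the point at infinity: rather than manipulating the limit $z_i\to\infty$ informally, the clean route is to carry out the entire argument in the chart $w=1/(z-z_i)$, where $\cratio_{ij}=\delta_j/\delta_{j-1}$ with $\delta_k:=w_{k+1}-w_k$ is an honest identity among finite quantities, and only pass back to $\C\cup\{\infty\}$ at the very end. Everything else is the telescoping and summation bookkeeping already present in the proof of Lemma \ref{lem:cross1}, read in reverse.
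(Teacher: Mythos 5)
Your proposal is correct and follows essentially the same route as the paper: normalize $z_i$ to $\infty$ so that $\cratio_{ij}$ becomes the ratio of consecutive differences, reconstruct the points via the recurrence $z_{k+1}=z_k+\prod_{j=1}^k\cratio_{ij}$ (up to the affine freedom you keep as $(d_0,z_0)$ and the paper fixes by setting $z_0=0$, $z_1=1$), and read the product and sum hypotheses as the closing-up conditions $z_n=z_0$ and $z_{n+1}=z_1$. Your uniqueness argument via the stabilizer of $\infty$ is the same in substance as the paper's appeal to the unique M\"obius transformation through three prescribed points.
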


\begin{proof} Once we show that there is a unique configuration with $z_i=\infty$, $z_1=1$ and $z_0=0$, the uniqueness follows since cross ratios are M\"{o}bius invariant and the fact that there always exists a unique M\"{o}bius transformations mapping any three distinct points to any three distinct points.
	
	Taking $z_i=\infty$, $z_1=1$ and $z_0=0$, we have
	\[
	\cratio_{ij}=\frac{z_{j+1}-z_j}{z_j- z_{j-1}}
	\]
	Hence using $z_0, z_1$, we can define inductively that
	$$ z_{k+1}=z_k+\prod_{j=1}^k \cratio_{ij}.$$
	We claim that $z_{n+1}=1=z_1$ and $z_n=0=z_0$. Indeed, consider the summation,
	$$\sum_{k=1}^n  z_{k+1}=\sum_{k=1}^n (z_k+\prod_{j=1}^k \cratio_{ij}).$$  We obtain that $z_{n+1}=z_1$ by the assumption that
	$\sum_{k=1}^n \prod_{j=1}^k (\cratio_{ij})=0$.
	Now consider $1=z_{n+1}=z_n+\prod_{j=1}^n (\cratio_{ij})= z_n+1$. It follows that $z_n=0=z_0$. Therefore the claim holds.
\end{proof}

\subsection{Delaunay condition} \label{sec:delaunay}

Since the surface $M$ is assumed to be oriented, every triangular face of $M$ is equipped with an orientation. Using this orientation together with a realization of the vertices on the Riemann sphere, we associate a circumdisk to every face.
\begin{definition}
	We write $\{ijk\}$ a  face with vertices $i,j,k$ that is oriented,  i.e. $\{ijk\}=\{jki\}=\{kij\}$ but $\{ijk\}\neq\{jik\}$. Suppose $z_i,z_j,z_k\in \mathbb{C}\cup\{\infty\}$ are distinct. Then there is an oriented circle $C_{ijk}$ passing through $z_i,z_j,z_k$ in cyclic order. It bounds two disks on the Riemann sphere. We denote $D_{ijk}$ the open disk with $C_{ijk}$ as the boundary in positive orientation and call $D_{ijk}$ the circumdisk of $\{ijk\}$ under $z$.
\end{definition}
\begin{figure}[h!] \centering
	\begin{minipage}{0.46\textwidth}
		\includegraphics[width=1\textwidth]{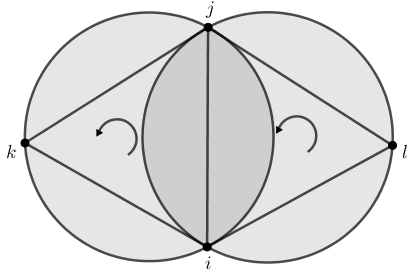}
	\end{minipage}
	\begin{minipage}{0.45\textwidth}
		\includegraphics[width=0.85\textwidth]{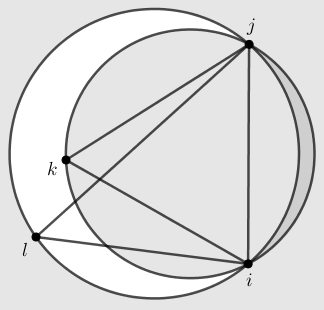}
	\end{minipage}
	\caption{Two possible configurations for Delaunay cross ratio $\Arg \cratio_{ij} \in [0,\pi)$. Left: Two bounded disks. Right: One bounded disk and one unbounded disk. Both figures satisfy the local empty circle condition.} 
	\label{fig:delaunay}
\end{figure}

\begin{proposition}	
	Suppose a cross ratio system is given and $z:V \to \mathbb{C}\cup\{\infty\}$ is a developing map of the universal cover. Then the cross ratio system satisfies $\Arg \cratio \in [0,\pi)$ if and only if the local empty circle condition is satisfied, i.e. for any two neighbouring faces $\{ijk\}$ and $\{jil\}$ (See Fig. \ref{fig:delaunay}), they satisfy
	\begin{enumerate}[(i)]
	   \item $z_k \notin D_{jil}$ and 
	   \item $z_l \notin  D_{ijk}$ and
	   \item $D_{jil} \cap D_{ijk} \neq \emptyset$
	\end{enumerate}

\end{proposition}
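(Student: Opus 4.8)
The plan is to exploit the M\"{o}bius invariance of every object appearing in the statement. Cross ratios are preserved by complex projective transformations (Section \ref{sec:mobius}), and such transformations send circles to circles, preserve orientations, and hence carry circumdisks to circumdisks; consequently they preserve both the property $\Arg\cratio\in[0,\pi)$ and each of the incidence conditions (i)--(iii). Since the developing map is only defined up to $PSL(2,\C)$, I may normalize the four relevant points $z_i,z_j,z_k,z_l$ independently for each edge $\{ij\}$. The natural choice is to send the two endpoints of the shared edge to $z_i=0$ and $z_j=\infty$, and then, by a further rotation and scaling fixing $0$ and $\infty$, to put $z_k=1$.

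The key simplification is that both circumcircles $C_{ijk}$ and $C_{jil}$ pass through $z_i$ and $z_j$; after normalization they pass through $0$ and $\infty$ and hence degenerate into \emph{straight lines through the origin}, so that the circumdisks $D_{ijk}$ and $D_{jil}$ become half-planes bounded by these lines. Writing $z_l=re^{i\theta_l}$, taking the limit $z_j\to\infty$ in the cross ratio gives
\[
\cratio_{ij}=-\frac{z_k}{z_l}=-\frac{1}{z_l},
\qquad\text{so that}\qquad
\Arg\cratio_{ij}=\pi-\theta_l \pmod{2\pi},
\]
which also exhibits $\Arg\cratio_{ij}$ as the oriented intersection angle of the two circumcircles. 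It then remains to identify the two half-planes. Tracking the cyclic order $(z_i,z_j,z_k)=(0,\infty,1)$ yields $D_{ijk}=\{\Im z<0\}$, while the cyclic order $(z_j,z_i,z_l)=(\infty,0,z_l)$ yields $D_{jil}=\{z:\Im(z/z_l)>0\}$, the half-plane lying to the left of the ray from $0$ towards $z_l$. This orientation bookkeeping --- deciding which of the two half-planes bounded by each line is the \emph{positively} oriented circumdisk --- is the one genuinely delicate point, and I would settle it by comparing each traversal against the standard orientation of a fixed reference half-plane.

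With these identifications the equivalence reduces to an elementary comparison of angular sectors. Conditions (i) $z_k\notin D_{jil}$ and (ii) $z_l\notin D_{ijk}$ both reduce to $\sin\theta_l\ge 0$, that is $\theta_l\in[0,\pi]$; this reflects the $i\leftrightarrow j$, $k\leftrightarrow l$ symmetry of the edge and shows the two conditions coincide here. Condition (iii) $D_{jil}\cap D_{ijk}\neq\emptyset$ fails exactly when the two half-planes are complementary, which, given $\theta_l\in[0,\pi]$, occurs only at $\theta_l=0$; it thus excludes precisely the degenerate boundary value. Combining the three conditions gives $\theta_l\in(0,\pi]$, which is exactly the range for which $\Arg\cratio_{ij}=\pi-\theta_l$ lies in $[0,\pi)$, establishing the equivalence. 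The remaining care is to treat the degenerate configurations consistently with the half-open conventions --- when $0$, $z_k$ and $z_l$ become collinear, or when one circumdisk is unbounded (the two cases of Figure \ref{fig:delaunay}) --- but these are handled automatically once the boundary values $\theta_l\in\{0,\pi\}$ are checked by hand.
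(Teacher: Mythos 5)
Your proof is correct and follows essentially the same route as the paper's: a M\"{o}bius normalization sending one endpoint of the shared edge to $\infty$ (the paper sends $z_i$ there, you send $z_j$), which turns both circumdisks into half-planes and reduces the claim to the elementary angle computation $\Arg\cratio_{ij}=\pi-\theta_l$. Your orientation bookkeeping identifying $D_{ijk}=\{\Im z<0\}$ and $D_{jil}=\{\Im(z/z_l)>0\}$ is accurate and in fact supplies the details that the paper's one-line proof leaves to the reader.
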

\begin{proof}
	Mapping $z_i$ to infinity by inversion, we have $\cratio_{ij} = \frac{z_k-z_j}{z_j-z_l}$ and the claim follows immediately. In particular, \[ D_{jil} \cap D_{ijk} = \emptyset\] if and only if $\Arg \cratio_{ij} = \pi$.
\end{proof}

\subsection{Complex projective structures} \label{sec:cp1}

Every Delaunay cross ratio system induces a complex projective structure on the closed surface together with a circle pattern. We recall the definition of a complex protective structure on a surface. 

\begin{definition}
	A complex projective structure on a surface $M$ is a maximal atlas of charts from open subsets of $M$ to the Riemann sphere such that the transition functions are restrictions of M\"{o}bius transformations. These charts are called projective charts.
	
	Two complex projective structures are marked isomorphic if there is a diffeomorphism homotopic to the identity mapping projective charts to projective charts. We denote $P(M)$ the space of marked complex projective structures.
\end{definition}

	Intuitively, a cross ratio system gives a recipe to glue circumdisks with transition functions as M\"{o}bius transformations. For every face $\tau=\{ijk\}$, one assigns the vertices to some distinct points $p_i,p_j,p_k$ in the plane to obtain a circumdisk $D_{\tau}$. Consider a neighboring face $\sigma=\{jil\}$, its circumdisk $D_{\sigma}$ has vertices at $q_j, q_i,q_l$. For the common edge $\{ij\}$, pick four points $z_i,z_j,z_k,z_l$ in the plane such that $\cratio_{ij}=-\frac{(z_k-z_i)(z_l-z_j)}{(z_j-z_k)(z_i-z_l)}$. Then there are unique M\"{o}bius transformations $\phi_{\tau,ij}$ mapping $p_i,p_j,p_k$ to $z_i,z_j,z_k$ and $\phi_{\sigma,ij}$ mapping $q_i,q_j,q_l$ to $z_i,z_j,z_l$. We define the transition map $\phi_{\tau}^{\sigma}:= \phi^{-1}_{\sigma,ij} \circ \phi_{\tau,ij} $ which is independent of the choice of $z$. Taking the cocycle condition into account, one can show that the disjoint union  $\sqcup D_{ijk}$ with a quotient relation $x \sim y$ if $y =\phi_{\tau}^{\sigma}(x)$ induces a surface with a complex projective structure. However, it should be careful that in general the resulting surface is not homeomorphic to $M$ unless the Delaunay condition is assumed (See section \ref{sec:one-vertex} for a counterexample). 
	
	\begin{proposition}\cite{Fock1993} \label{prop:cp1} 
		Every Delaunay cross ratio system defines a complex projective structure on $M$ together with a circle pattern. It yields a forgetful map $f: \mathcal{D} \to P(M)$ from the space of all Delaunay cross ratio systems $\mathcal{D}$ to the space of all marked complex projective structures $P(M)$.
	\end{proposition}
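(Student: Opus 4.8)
The plan is to upgrade the vertex developing map of the preceding Proposition to an honest developing map of a complex projective structure, and then to use the Delaunay hypothesis to certify that the resulting surface is $M$ itself rather than a branched cover. Concretely, I would fix a developing map $z:\hat V\to\mathbb{C}\cup\{\infty\}$ on the universal cover $\hat M$ with holonomy $\rho:\pi_1(M)\to PSL_2(\mathbb{C})$, and for each face $\tau=\{ijk\}$ of $\hat M$ form the open circumdisk $D_\tau\subset\mathbb{C}\cup\{\infty\}$. Adjacent disks $D_{ijk}$ and $D_{jil}$ both pass through $z_i$ and $z_j$, so I glue them along the lens $D_{ijk}\cap D_{jil}$ by the identity of the sphere; abstractly this is the M\"obius transition map $\phi^{\sigma}_{\tau}$ introduced before the statement, which is insensitive to the normalization of $z$. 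The cocycle condition around each vertex is exactly the triviality of holonomy furnished by Lemma \ref{lem:cross2} from Definition \ref{def:crsys}, so the glued object $S$ is a surface carrying projective charts $D_\tau$ and a free combinatorial $\pi_1(M)$-action intertwining the deck transformations of $\hat M$ with $\rho$.

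The crux, and the only place the Delaunay condition enters, is to prove that $S$ is a manifold homeomorphic to $\hat M$. For this I would examine the star of a single vertex $i$. After normalizing $z_i=\infty$ by an inversion, Lemma \ref{lem:cross2} gives $\cratio_{ij}=(z_{j+1}-z_j)/(z_j-z_{j-1})$, so $\Arg\cratio_{ij}$ is the exterior turning angle of the Euclidean link polygon $z_1z_2\cdots z_n$, and the circumdisks of the faces at $i$ become the half-planes bounded by its edges. The Delaunay bound $\Arg\cratio\in[0,\pi)$ makes every turning angle lie in $[0,\pi)$, the empty-circle conditions (i)--(iii) prevent a link vertex from entering a neighboring half-plane, and the ramification hypothesis $s_i\equiv 1$ forces $\sum_j\Arg\cratio_{ij}=2\pi$. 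Together these say the link is a convex polygon traversed once, so its half-planes cover a punctured neighborhood of $z_i=\infty$ exactly once; hence the charts are local homeomorphisms near every vertex (and trivially so along edges and faces), and $S\to\hat M$ is a homeomorphism.

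It then remains to descend and to check that $f$ is well defined. Quotienting $S$ by the $\pi_1(M)$-action equips $M$ with a complex projective structure whose charts are the descended circumdisks, and the circumcircles $C_\tau$ descend to the advertised circle pattern with intersection angles $\Arg\cratio$; the collapse of each glued disk to its face supplies a canonical marking $M\cong S/\pi_1(M)$. By the uniqueness clause of the developing-map Proposition, changing the normalization of $z$ alters every chart by one global M\"obius transformation and therefore yields a marked-isomorphic structure, so $\cratio\mapsto[S]\in P(M)$ depends only on $\cratio\in\mathcal D$ and defines $f$. I expect the winding-number step of the second paragraph to be the genuine obstacle: deducing embeddedness of the vertex link from (i)--(iii) together with $s_i\equiv 1$ is precisely what fails for non-Delaunay systems, where the branched-cover pathology noted in Section \ref{sec:one-vertex} appears.
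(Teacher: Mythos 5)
Your construction is the same one the paper uses: it only sketches the gluing of circumdisks along M\"obius transition maps before Proposition \ref{prop:cp1} and defers the details to \cite{Fock1993}, noting exactly as you do that the Delaunay condition is what makes the glued surface homeomorphic to $M$. Your normalization $z_i=\infty$ turning the link into a convex polygon with exterior angles $\Arg\cratio_{ij}$ summing to $2\pi$ correctly supplies the vertex-link step the paper leaves to the citation, so the proposal is a fleshed-out version of the paper's own argument rather than a different route.
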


	The holonomy representation of a Delaunay cross ratio system is identical to that of the underlying complex projective structure.

\subsection{Discrete holomorphic quadratic differentials} \label{sec:hqd}
We introduce discrete holomorphic quadratic differentials as tangent vectors of $P(\Theta)$. We first rewrite the equations for cross ratio systems in Definition \ref{def:crsys}. 

\begin{definition}
	We define $\Phi:\mathbb{C}^{E} \to \mathbb{C}^{2V}$ as follows: for every vertex $i\in V$
	\begin{gather*}
	\Phi(\cratio)_{i,1} :=  \Pi_{j=1}^m \cratio_{ij} -1  \\
	\Phi(\cratio)_{i,2}:= \cratio_{i1} + \cratio_{i1} \cratio_{i2} + \cratio_{i1}\cratio_{i2}\cratio_{i3} + \dots +  \cratio_{i1}\cratio_{i2}\dots\cratio_{im}
	\end{gather*}
	Furthermore, given an angle structure $\Theta$, we define the exponential maps
	\begin{enumerate}[(i)]
	\item $\exp:\mathbb{C}^{E} \to \mathbb{C}^{E} $ by $\exp(X)_{ij}:= e^{X_{ij}}$ and
	\item  $\exp_{\Theta}: \mathbb{R}^{E} \to \mathbb{C}^{E} $ by $\exp_{\Theta}(X)_{ij}:= e^{X_{ij}+ \mathbf{i} \Theta_{ij}}$ where $\mathbf{i}= \sqrt{-1}$.
	\end{enumerate}
\end{definition}  
	Notice that $\exp_{\Theta}$ is diffeomorphic to its image and the space of cross ratio systems $P(\Theta)$ can be identified with the zero set $(\Phi\circ \exp_{\Theta})^{-1} \{0\}$. To show that it is a manifold, we need to show that the Jacobian $D(\Phi\circ \exp_{\Theta})$ has constant rank along the zero set. Equivalently its kernel has constant dimension.

 \begin{definition} \label{prop:jacobian}
	Suppose $X:E\to \mathbb{R}$ satisfies $\Phi \circ \exp_{\Theta}(X)=0$. We write $\cratio:=\exp_{\Theta}(X)$. Then $q:E\to \mathbb{R}$ is called a discrete holomorphic quadratic differential if it is in the kernel of the Jacobian $D(\Phi\circ \exp_{\Theta})_X$, i.e. for every vertex $i$
	\begin{align*}
	0=&D(\Phi\circ \exp_{\Theta})_X(q)_{i,1}= \sum_j q_{ij} \\
	0=&D(\Phi\circ \exp_{\Theta})_X(q)_{i,2} = q_{i1} \cratio_{i1} + (q_{i1} + q_{i2})\cratio_{i1} \cratio_{i2} +  \dots + (q_{i1}+ \dots + q_{im}) \cratio_{i1}\cratio_{i2}\dots\cratio_{im}
	\end{align*} 	
	We write the real vector space $Q^{\mathbb{R}}(\cratio):=\Ker(D(\Phi\circ \exp_{\Theta})_X)  \subset \mathbb{R}^{|E|}$. Similarly, we define the complex vector space $Q^{\mathbb{C}}(\cratio):=\Ker(D(\Phi\circ \exp)_X) \subset \mathbb{C}^{|E|}$.
\end{definition}

In the smooth theory, a holomorphic quadratic differential can be obtained via the Schwarzian derivative, which measures the degree how much a holomorphic map fails to be a M\"{o}bius transformation. Here we have a discrete analog: A discrete holomorphic quadratic differential $q$ is a tangent vector of $P(\Theta)$ that describes an infinitesimal deformation of the underlying circle pattern preserving the intersection angle $\Theta$. Such a deformation is induced by a M\"{o}bius transformation if and only if $q\equiv0$. 

\begin{proposition}\label{prop:lowerbound}
	For every cross ratio system $\cratio$ on a closed surface with genus $g$
	\begin{align*}
		\dim_{\mathbb{R}} Q^{\mathbb{R}}(\cratio)&\geq 6g-6 \\
		\dim_{\mathbb{C}} Q^{\mathbb{C}}(\cratio)&\geq 6g-6 + |V|
	\end{align*}
\end{proposition}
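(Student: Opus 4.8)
The plan is to derive both inequalities from a single rank--nullity count, with Euler's formula supplying the only geometric input. Since $M$ is a triangulation of a closed surface of genus $g$, every face is a triangle and every edge borders two faces, so $3|F| = 2|E|$; combining this with $|V| - |E| + |F| = 2 - 2g$ gives
\[
|E| = 3|V| + 6g - 6.
\]
This identity is what converts a bound on the rank of the Jacobian into the stated lower bounds on $\dim Q^{\mathbb{R}}(\cratio)$ and $\dim Q^{\mathbb{C}}(\cratio)$.

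For the complex inequality I would simply observe that the Jacobian $D(\Phi\circ\exp)_X \colon \mathbb{C}^{E} \to \mathbb{C}^{2V}$ has a target of complex dimension $2|V|$, so $\rank_{\mathbb{C}} D(\Phi\circ\exp)_X \le 2|V|$. Rank--nullity then yields
\[
\dim_{\mathbb{C}} Q^{\mathbb{C}}(\cratio) = |E| - \rank_{\mathbb{C}} D(\Phi\circ\exp)_X \ge |E| - 2|V| = 6g - 6 + |V|,
\]
which is exactly the asserted complex bound, with no further work required.

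For the real inequality the naive count is too weak, since viewing $\mathbb{C}^{2V}\cong\mathbb{R}^{4V}$ would only give $|E|-4|V| = 6g-6-|V|$. The improvement I would exploit is that half the constraints are secretly real-valued on real tangent vectors: for $q\in\mathbb{R}^{E}$,
\[
D(\Phi\circ\exp_{\Theta})_X(q)_{i,1} = \sum_j q_{ij} \in \mathbb{R},
\]
because the angles $\Theta$ are frozen (so each $q_{ij}=\delta X_{ij}$ is real) and $\prod_j \cratio_{ij}=1$ at the solution. Consequently the image of $D(\Phi\circ\exp_{\Theta})_X$ lies in the real subspace $\mathbb{R}^{V}\oplus\mathbb{C}^{V}\subset\mathbb{C}^{2V}$, whose real dimension is $|V|+2|V|=3|V|$. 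Hence $\rank_{\mathbb{R}} D(\Phi\circ\exp_{\Theta})_X \le 3|V|$, and rank--nullity gives
\[
\dim_{\mathbb{R}} Q^{\mathbb{R}}(\cratio) = |E| - \rank_{\mathbb{R}} D(\Phi\circ\exp_{\Theta})_X \ge |E| - 3|V| = 6g - 6.
\]

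The computation is essentially bookkeeping, so I do not expect a hard obstacle; the one genuinely substantive point is the real/complex distinction in the last step, namely recognizing that varying only the moduli $X$ with $\Theta$ held fixed forces the product equations $\Phi_{i,1}$ to impose real rather than complex linear conditions, which is precisely what halves their contribution to the rank. The only care needed is the degree bookkeeping at a vertex carrying a loop, where an incident edge is counted twice in the link; this affects neither the edge/vertex totals entering Euler's formula nor the target dimension of $\Phi$, so both estimates hold as stated.
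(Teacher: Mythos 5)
Your proof is correct and follows essentially the same route as the paper: a dimension count of variables against linear constraints, using $|E|=3|V|+6g-6$ from Euler's formula, with the key observation that at a solution the product equation contributes only one real condition per vertex to the real kernel (since $\sum_j q_{ij}$ is real for real $q$) while the second equation contributes a full complex condition. The paper states this more tersely ("one real equation and one complex equation per vertex"), but the content is identical.
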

\begin{proof}
	It follows by counting the number of variables and linear constraints. An inequality is obtained since the constraints might be linearly dependent. 
	
	For $Q^{\mathbb{R}}(\cratio)$, there are $|E|$ real variables. For every vertex, there is one real equation and one complex equation. Hence \[\dim_{\mathbb{R}} Q^{\mathbb{R}}(\cratio)\geq |E|-|V|-2|V|=6g-6.\]
	
	For $Q^{\mathbb{C}}(\cratio)$, there are $|E|$ complex variables. For every vertex, there are two complex equations. Hence \[\dim_{\mathbb{C}} Q^{\mathbb{C}}(\cratio)\geq |E|-|V|-|V|=6g-6+|V|.\]
	
\end{proof}

By expressing the cross ratios in terms of the developing map $z$, we obtain the original formulation of discrete holomorphic quadratic differentials introduced in \cite{Lam2015a}.

\begin{proposition}
	Suppose a cross ratio system is given on $M=(V,E,F)$. We denote its developing map $z: \hat{V} \to \mathbb{C}\cup\{\infty\}$ of the universal cover $\hat{M}=(\hat{V},\hat{E},\hat{F})$. Then $q:E \to \mathbb{R}$ is a holomorphic quadratic differential if and only if for every vertex $i \in \hat{V}$
	\begin{align}
	\sum_j \hat{q}_{ij} &=0 \label{qsum} \\
	\sum_j \hat{q}_{ij}/(z_j - z_i) &=0 \label{zsum}
	\end{align} 
	where $\hat{q}$ is the lift of $q$ to the universal cover $\hat{M}$ \[\
	\hat{q}_{ij} := q_{p(ij)}.
	\]
\end{proposition}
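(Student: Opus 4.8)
The plan is to verify the equivalence vertex by vertex, after translating the cross ratios into the developing map. Fix a vertex $i$ and label its neighbors cyclically $z_0,z_1,\dots,z_n=z_0,z_{n+1}=z_1$ as in Lemma~\ref{lem:cross1}, so that $\cratio_{ij}$ is the cross ratio of the edge $\{i,z_j\}$ and $\hat q_{ij}$ its lifted value. The first equation of Definition~\ref{prop:jacobian}, namely $\sum_j q_{ij}=0$, is literally \eqref{qsum}; since the covering is a local homeomorphism near $i$, the neighbor sums over $\hat V$ and over $V$ agree, so this half is immediate. All the content therefore lies in matching the second equation of Definition~\ref{prop:jacobian} with \eqref{zsum}.

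For this I would reuse the partial-product identity already established inside the proof of Lemma~\ref{lem:cross1}. Writing $w_k:=1/(z_k-z_i)$ and $C:=(z_0-z_i)(z_1-z_i)/(z_0-z_1)$, that proof gives
\[
P_k:=\prod_{j=1}^k \cratio_{ij} = C\,(w_k-w_{k+1}),\qquad k=1,\dots,n .
\]
The second expression in Definition~\ref{prop:jacobian} is $\sum_{k=1}^n S_kP_k$ with $S_k:=q_{i1}+\dots+q_{ik}$ and $S_0=0$. Substituting the identity and summing by parts (using $w_{n+1}=w_1$, since $z_{n+1}=z_1$),
\[
\sum_{k=1}^n S_k P_k = C\sum_{k=1}^n S_k(w_k-w_{k+1}) = C\Big(\sum_{k=1}^n (S_k-S_{k-1})\,w_k - S_n\,w_{n+1}\Big) = C\sum_{k=1}^n q_{ik}\,w_k - C\,S_n\,w_{n+1}.
\]
The boundary term carries the factor $S_n=\sum_j q_{ij}$, which vanishes exactly by \eqref{qsum}. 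Hence, whenever the first equation holds, the second equation equals $C\sum_j \hat q_{ij}/(z_j-z_i)$, and since $z_i,z_0,z_1$ are distinct we have $C\neq 0$. This yields the equivalence in both directions: the two equations of Definition~\ref{prop:jacobian} hold at $i$ if and only if \eqref{qsum} and \eqref{zsum} hold at $i$, with the first equation being precisely what kills the boundary term in the reduction of the second.

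The one point needing care, which I regard as the only genuine subtlety, is that the developing map is defined only up to M\"obius transformations and satisfies the equivariance $z\circ\gamma=\rho_\gamma\circ z$, whereas the statement requests \eqref{zsum} at every lifted vertex. To see the condition is well posed I would record that under $w=(az+b)/(cz+d)$ one has $w_j-w_i=(ad-bc)(z_j-z_i)/((cz_j+d)(cz_i+d))$, so, using $\sum_j\hat q_{ij}=0$,
\[
\sum_j \frac{\hat q_{ij}}{w_j-w_i}=\frac{(cz_i+d)^2}{ad-bc}\sum_j \frac{\hat q_{ij}}{z_j-z_i}.
\]
Thus \eqref{zsum} transforms covariantly by a nonzero factor: it is independent of the normalization of $z$, compatible with the $\pi_1$-action (so that holding at one lift is equivalent to holding at all lifts of a given vertex, matching the per-vertex equations of Definition~\ref{prop:jacobian}), and one may normalize by a M\"obius transformation so that $z_i$ and its neighbors are finite, with the convention $1/(z_j-z_i)=0$ when $z_j=\infty$. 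Everything else is mechanical; the essential step is recognizing that summation by parts, combined with \eqref{qsum}, is exactly what converts the telescoping partial products of Lemma~\ref{lem:cross1} into the weighted sum appearing in \eqref{zsum}.
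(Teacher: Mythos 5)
Your proof is correct. The paper itself states this proposition without proof (deferring to the original reference), but your argument is exactly the natural one implicit in its setup: the telescoping identity $\prod_{j=1}^k \cratio_{ij} = C\,(w_k - w_{k+1})$ from the proof of Lemma~\ref{lem:cross1}, followed by Abel summation, with \eqref{qsum} killing the boundary term and $C\neq 0$ giving the equivalence of the second linearized equation with \eqref{zsum}; your closing check that \eqref{zsum} transforms by the nonzero factor $(cz_i+d)^2/(ad-bc)$ under M\"obius changes of the developing map (given \eqref{qsum}) correctly settles the only well-posedness issue.
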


Discrete holomorphic quadratic differentials are related to discrete minimal surfaces and discrete integrable systems. See \cite{Lam2016,Lam2017,Lam2015} for more details.

\begin{remark}
	Here we explain the smooth counterparts of Equations \eqref{qsum} and \eqref{zsum}. In the smooth theory, a holomorphic quadratic differential differential $q$ on a Riemann surface locally is of the form $q= f(z) \, dz^2$
	where $f$ is a holomorphic function. 	
	Equation \eqref{qsum} mimics the property in the smooth theory that
	\[
	q(X)+q(JX)+q(J^2 X) + q(J^3 X) =0
	\]
	where $X$ is any tangent vector and $J$ is the counterclockwise rotation of $\pi/2$. Particularly $q(X)=q(-X)$. The holomorphicity is encoded in Equation \eqref{zsum}: $f$ being holomorphic is equivalent to the 1-form
	\[
	q/dz = f(z) \, dz
	\]
	being closed, which holds if and only if $\oint q/dz=0$ over any contractible loop. 
\end{remark}

\subsection{Discrete harmonic functions: cotangent Laplacian} \label{sec:harmonic}

We recall some facts about a particular graph Laplacian, whose edge weights depend on a realization of the graph into the plane. It arises in the finite element discretization \cite{Pinkall1993} and has been applied to statistical mechanics \cite{Smirnov2010}. We will use it to study the Jacobian of $\Phi\circ \exp_{\Theta}$ and the projection to the Teichm\"{u}ller space in the next two sections.

\begin{definition}\label{def:cotlap}
	Suppose $z:V \to \mathbb{C}$ is a realization of a triangulated surface. Then a function $u:V \to \mathbb{R}$ is harmonic in the sense of the cotangent Laplacian if for every vertex $i$
	\[
	\sum_j c_{ij} (u_j - u_i) = 0
	\]
	where $c_{ij}=c_{ji}:=\cot \angle jki + \cot \angle ilj $ is called the cotangent weight (See Figure \ref{fig:delaunay} left). Here $\angle jki \in (-\pi,\pi)$ and it is positive if $z_j,z_k,z_i$ are in counterclockwise order and is negative otherwise. 
\end{definition}

There is a conjugate harmonic function for every discrete harmonic function.

\begin{proposition}
	On a simply connected domain, a function $u:V \to \mathbb{R}$ is harmonic if and only if there exists $u^*:F \to \mathbb{R}$ unique up to an additive constant such that
	\[
	u^*_{ijk} - u^*_{jil} = \frac{1}{2}(\cot \angle jki + \cot \angle ilj) (u_j-u_i)
	\]
	where $\{ijk\}, \{jil\}$ are the left and the right faces of the oriented edge pointing from $i$ to $j$.
\end{proposition}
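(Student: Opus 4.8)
The plan is to reinterpret the prescribed differences as a discrete $1$-form on the dual graph and to show that this form admits a potential exactly when $u$ is harmonic. For an oriented edge pointing from $i$ to $j$, with left face $\{ijk\}$ and right face $\{jil\}$, I would set
\[
\eta(i\to j) := \tfrac{1}{2}\big(\cot\angle jki + \cot\angle ilj\big)(u_j-u_i) = \tfrac12 c_{ij}(u_j - u_i).
\]
The first thing to check is that this is consistent on unoriented edges, i.e. $\eta(j\to i)=-\eta(i\to j)$: reversing the edge interchanges the left and right faces, which leaves the symmetric weight $c_{ij}=c_{ji}$ unchanged while flipping the sign of $u_j-u_i$. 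Thus $\eta$ is an antisymmetric edge function, and the desired identity $u^*_{ijk}-u^*_{jil}=\eta(i\to j)$ says precisely that $u^*$, viewed as a function on the vertices of the dual graph $M^*$ (whose vertices are the faces of $M$ and whose edges are dual to the edges of $M$), is a \emph{potential} for $\eta$.

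Next I would invoke the elementary fact that on a connected graph a $1$-form is the coboundary of a vertex function if and only if its sum around every cycle vanishes, and that in this case the potential is unique up to an additive constant. Connectivity of $M$ gives connectivity of $M^*$, which yields the uniqueness clause immediately. For existence I would use the planarity afforded by simple connectivity: the cycle space of the dual graph of a triangulated disk is generated by the boundaries of its bounded faces, and these bounded faces are in bijection with the \emph{interior} vertices of $M$, the boundary of the dual face at $i$ being the cyclic sequence of triangles incident to $i$. Hence $\eta$ admits a potential if and only if its sum around the dual face at each interior vertex $i$ vanishes.

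It then remains to evaluate this sum. Labelling the neighbours of $i$ cyclically as $1,\dots,n$, the triangles around $i$ are $\{i,j,j+1\}$, and traversing the dual face at $i$ crosses the edge $\{i,j\}$ exactly once, contributing $u^*_{\{i,j,j+1\}}-u^*_{\{i,j-1,j\}}=\eta(i\to j)$. The contributions telescope, so the closedness condition around $i$ reads
\[
\sum_{j}\eta(i\to j)=\tfrac12\sum_{j}c_{ij}(u_j-u_i)=0,
\]
which is exactly the cotangent-Laplacian equation at $i$. This proves both implications at once: $u$ is harmonic at every interior vertex if and only if $\eta$ is closed if and only if the potential $u^*$ exists.

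I expect the main obstacle to lie in the bookkeeping of the middle and final steps, namely correctly identifying the cycle space of $M^*$ with loops around interior vertices (where simple connectivity is what rules out extra homology, and where one must treat boundary vertices with care, since no Laplacian equation is imposed there) and fixing the orientation conventions so that the signed sum of $\eta$ around a dual face collapses to $\sum_j c_{ij}(u_j-u_i)$ rather than its negative or a mismatched combination. Once these conventions are pinned down, the telescoping computation itself is routine.
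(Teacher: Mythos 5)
Your argument is correct and is the standard one; the paper states this proposition without proof, treating it as a known fact about the cotangent Laplacian, so there is nothing to compare against. The only point requiring care is the one you already flag: the equivalence holds with ``harmonic'' read at interior vertices (the dual $1$-form $\eta$ lives only on interior edges, and simple connectivity identifies the cycle space of the dual graph with the loops around interior vertices), after which the antisymmetry check and the telescoping sum $\sum_j \tfrac12 c_{ij}(u_j-u_i)=0$ close both directions exactly as you describe.
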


As in the smooth theory, linear functions are discrete harmonic. For any two complex numbers $a,b$, we write their inner product $
\langle a,b \rangle = \Re( a \bar{b})$.

\begin{proposition}
	Given a realization $z:V \to \mathbb{C}$ of a triangulated surface, a function $u:V \to \mathbb{R}$ is called linear if there exists $a \in \mathbb{C}, b \in \mathbb{R}$ such that
	\[
	u_i = \langle a,z_i \rangle + b.
	\]
	It is harmonic with respect to the cotangent Laplacian and its conjugate harmonic function $u^*:F \to \mathbb{R}$ is of the form
	\begin{equation}\label{eq:charmonic}
			u^*_{ijk} = \langle a, -\mathbf{i} z^*_{ijk} \rangle + d
	\end{equation}
	for some $d \in \mathbb{R}$ where $z^*_{ijk}$ is the circumcenter of triangle $\{ijk\}$ and $\mathbf{i}= \sqrt{-1}$.
\end{proposition}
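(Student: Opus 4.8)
The plan is to reduce both assertions to a single geometric identity relating the circumcenter $z^*_{ijk}$ to the edge it sits over, and then verify each claim by a short direct computation. The identity I would establish first is that, for a positively oriented triangle $\{ijk\}$,
\[
z^*_{ijk} = \frac{z_i + z_j}{2} + \frac{1}{2}\cot(\angle jki)\,\mathbf{i}\,(z_j - z_i),
\]
where $\mathbf{i}$ denotes rotation by $\pi/2$. This follows from the inscribed angle theorem: the circumcenter lies on the perpendicular bisector of $ij$, its signed distance to the midpoint is $R\cos\angle jki$ while the half-edge length is $R\sin\angle jki = \tfrac12|z_j - z_i|$, so the displacement from the midpoint to the circumcenter equals $\tfrac12\cot(\angle jki)$ times the left unit normal, i.e. $\tfrac12\cot(\angle jki)\,\mathbf{i}(z_j - z_i)$. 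The signed-angle convention $\angle jki \in (-\pi,\pi)$ makes this formula valid also for obtuse triangles and when the circumcenter falls outside the triangle.

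For harmonicity of a linear $u$, note that since $\langle a,\cdot\rangle$ is $\mathbb{R}$-linear we have $u_j - u_i = \langle a, z_j - z_i\rangle$, whence $\sum_j c_{ij}(u_j - u_i) = \langle a, \sum_j c_{ij}(z_j - z_i)\rangle$. It therefore suffices to prove that the realization itself is harmonic, $\sum_j c_{ij}(z_j - z_i) = 0$. Applying the identity to the two faces adjacent to an edge $ij$ around vertex $i$ yields $\tfrac12 c_{ij}\,\mathbf{i}(z_j - z_i) = z^*_{\mathrm{left}} - z^*_{\mathrm{right}}$; summing over the edges emanating from $i$ telescopes over the cyclically ordered circumcenters and vanishes. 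Dividing by $\mathbf{i}/2$ gives $\sum_j c_{ij}(z_j - z_i) = 0$, hence harmonicity of $z$ and of every linear $u$.

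For the conjugate formula, I would set $u^*_{ijk} = \langle a, -\mathbf{i}\,z^*_{ijk}\rangle + d$ and compute its jump across the edge $i\to j$, whose left and right faces are $\{ijk\}$ and $\{jil\}$. Applying the identity to each face (with edge vectors $z_j - z_i$ and $z_i - z_j$, so that the two cotangents add rather than cancel) gives
\[
z^*_{ijk} - z^*_{jil} = \frac{1}{2}(\cot\angle jki + \cot\angle ilj)\,\mathbf{i}(z_j - z_i) = \frac{1}{2}c_{ij}\,\mathbf{i}(z_j - z_i).
\]
Then $u^*_{ijk} - u^*_{jil} = \langle a, -\mathbf{i}(z^*_{ijk} - z^*_{jil})\rangle$, and since $-\mathbf{i}\cdot\mathbf{i} = 1$ this equals $\tfrac12 c_{ij}\langle a, z_j - z_i\rangle = \tfrac12 c_{ij}(u_j - u_i)$, which is exactly the defining relation of the conjugate harmonic function. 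As the conjugate is unique up to an additive constant on a simply connected domain, this identifies $u^*$ with the claimed expression.

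The main point requiring care is pinning down the orientation and sign in the circumcenter identity — namely which normal direction $\pm\mathbf{i}(z_j - z_i)$ to use, and confirming via the signed-angle convention that the same formula survives for obtuse and degenerate triangles — together with checking that the left and right faces of a directed edge contribute opposite edge vectors, so that the cotangent weights add. Once the identity is fixed with the correct sign, both the telescoping argument and the conjugate computation are routine bookkeeping.
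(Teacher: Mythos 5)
Your proof is correct and follows essentially the same route as the paper: both rest on the circumcenter identity $z^*_{ijk} - z^*_{jil} = \frac{\mathbf{i}}{2}(\cot \angle jki + \cot \angle ilj)(z_j - z_i)$ and the same one-line computation showing that $u^*_{ijk} = \langle a, -\mathbf{i} z^*_{ijk}\rangle + d$ satisfies the defining relation of the conjugate. The only minor difference is that you prove harmonicity directly by telescoping the circumcenters around a vertex (showing $\sum_j c_{ij}(z_j - z_i) = 0$), and you derive the edge identity from a finer single-face formula, whereas the paper states the difference identity outright and deduces harmonicity from the existence of the conjugate via the preceding equivalence.
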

\begin{proof}
	Notice that for any two neighboring triangles $\{ijk\},\{jil\}$, the difference between the circumcenters can be written as
	\[
	z^*_{ijk} - z^*_{jil} = \frac{\mathbf{i}}{2}(\cot \angle jki + \cot \angle ilj) (z_j -z_i).
	\]
	Suppose $u$ is linear and of the form $u= \langle a,z \rangle + b$, we define $u^*:F \to \mathbb{R}$ via $\eqref{eq:charmonic}$. Then
	\begin{align*}
			u^*_{ijk} - 	u^*_{jil} =  \langle a, -\mathbf{i}  z^*_{ijk} \rangle - \langle a, -\mathbf{i} z^*_{jil} \rangle &= \frac{1}{2}(\cot \angle jki + \cot \angle ilj) ( \langle a,z_j\rangle -\langle a, z_i \rangle )\\ &=\frac{1}{2}(\cot \angle jki + \cot \angle ilj) (u_j-u_i)
	\end{align*}
Hence $u$ is harmonic with $u^*$ as its conjugate harmonic function.
\end{proof}

It is known that there is a correspondence between discrete holomorphic quadratic differentials and discrete harmonic functions with respect to the cotangent Laplacian. We outline here the results from \cite{Lam2015a}. 

\begin{proposition}[\cite{Lam2015a}] \label{prop:hqdharmonic}
	Suppose $z:V \to \mathbb{C}$ is a realization of a simply connected triangular mesh. Then there is a one-to-one correspondence between the following:
	\begin{enumerate}[(i)]
		\item a discrete holomorphic quadratic differential $q:E \to \mathbb{R}$;
		\item an infinitesimal deformation of the developing map $\dot{z}:V \to \mathbb{C}$ such that the argument of the cross ratios  $\Arg \cratio=\Imaginary \log \cratio$ is preserved. This $\dot{z}$ is unique up to an infinitesimal M\"{o}bius transformation;
		\item  a discrete harmonic function $u:V\to \mathbb{R}$ unique up to a linear function.
	\end{enumerate}
\end{proposition}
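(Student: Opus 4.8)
The plan is to regard the quadratic differential in (i) as the central object and to prove the two correspondences (i)$\Leftrightarrow$(ii) and (i)$\Leftrightarrow$(iii) separately; the two stated ambiguities (M\"{o}bius versus linear) need not match each other, since both are measured against $q$. For (i)$\Leftrightarrow$(ii), given an infinitesimal deformation $\dot z$ of the developing map I would set on each edge
\[
q_{ij} := \frac{\dot{\cratio}_{ij}}{\cratio_{ij}} = \frac{d}{dt}\Big|_{0} \log \cratio_{ij}.
\]
Because $\Arg \cratio_{ij} = \Im \log \cratio_{ij}$, preserving the argument to first order is the same as $\Im(\dot{\cratio}_{ij}/\cratio_{ij}) = 0$, i.e.\ $q$ being real-valued; this matches the reality requirement on $q$. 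Differentiating the two vertex relations then reproduces exactly the defining equations of a discrete holomorphic quadratic differential: differentiating the product relation \eqref{eq:crproduct} gives $\sum_j q_{ij} = 0$, which is \eqref{qsum}, and differentiating the sum relation \eqref{eq:crsum}, after an Abel summation that uses the telescoping partial-product identity of Lemma \ref{lem:cross1}, gives $\sum_j q_{ij}/(z_j - z_i) = 0$, which is \eqref{zsum}. The Abel summation needs $\sum_j q_{ij}=0$ to cancel the boundary terms, so \eqref{zsum} is produced modulo \eqref{qsum}.

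For the remaining content of (i)$\Leftrightarrow$(ii) I would argue about the kernel and surjectivity. If $q\equiv 0$ then $\dot{\cratio}\equiv 0$, so the deformation preserves every cross ratio; since by Lemma \ref{lem:cross2} a realization is determined by its cross ratios up to $PSL(2,\C)$, such a $\dot z$ is tangent to the M\"{o}bius orbit, i.e.\ an infinitesimal M\"{o}bius transformation, which yields the clause that $\dot z$ is unique up to an infinitesimal M\"{o}bius transformation. Conversely, given $q$ with \eqref{qsum} and \eqref{zsum} I reconstruct $\dot z$ by differentiating the construction of the developing map: fix $\dot z$ freely on a seed triangle (spending the M\"{o}bius freedom) and propagate across each edge, where differentiating \eqref{eq:developingmap} determines the new value $\dot z_l$ from $\dot z_i,\dot z_j,\dot z_k$ and $\dot{\cratio}_{ij}=\cratio_{ij}q_{ij}$ through a nondegenerate complex-linear equation. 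On the simply connected mesh, path-independence of this propagation reduces to closing up around each interior vertex, which is the differentiated form of Lemma \ref{lem:cross2}, and the closing-up holds precisely because of \eqref{qsum} and \eqref{zsum}.

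For (i)$\Leftrightarrow$(iii) I would follow the smooth model, in which the quadratic differential attached to a harmonic $u=\Re h$ is $h''\,dz^2$: this is linear in $u$ and vanishes exactly when $u$ is affine, matching ``unique up to a linear function''. In the discrete setting $u:V\to\mathbb{R}$ carries its conjugate $u^*:F\to\mathbb{R}$, and the pair $(u,u^*)$ plays the role of $h$; the edge quantity $q$ is built as the discrete second derivative of $(u,u^*)$, using the two endpoints and the two adjacent circumcenters of each edge. Running this backwards is two successive discrete integrations (from $q$ to a discrete holomorphic one-form, then to $(u,u^*)$), available on the simply connected mesh, and the two integration constants realize exactly the linear-function ambiguity; the discrete second derivative of an affine $u$ vanishes, which pins down the kernel.

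The main obstacle is the verification that the two vertex equations \eqref{qsum} and \eqref{zsum} for $q$ collapse to the single cotangent-Laplace equation $\sum_j c_{ij}(u_j-u_i)=0$ for $u$. The mechanism is the circumcenter identity $z^*_{ijk}-z^*_{jil}=\tfrac{\mathbf{i}}{2}\,c_{ij}(z_j-z_i)$ together with its dual $u^*_{ijk}-u^*_{jil}=\tfrac12 c_{ij}(u_j-u_i)$: substituting these into the discrete second derivative, one shows that a suitable real combination of \eqref{qsum} and \eqref{zsum} reduces to harmonicity, while the complementary combinations hold identically for every $u$ by the very definitions of $z^*$ and $u^*$. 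Carrying out this reduction, with the primal--dual bookkeeping and the reality of $q$ tracked carefully, is where the real work lies; by comparison the steps of (i)$\Leftrightarrow$(ii) are formal once the differentiated vertex identities are in hand.
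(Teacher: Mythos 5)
Your treatment of (i)$\Leftrightarrow$(ii) is essentially the paper's: it also takes $q_{ij}=\dot{\cratio}_{ij}/\cratio_{ij}$ (written out as the four difference quotients around the edge), observes that reality of $q$ is exactly preservation of $\Im\log\cratio$, and recovers $\dot z$ from $q$ by prescribing it on a seed face and propagating, the M\"{o}bius ambiguity accounting for the seed. Where you genuinely diverge is the harmonic-function correspondence. The paper does not pass from $q$ to $u$ directly; it chains (iii) to (ii), defining $u$ on vertices through the edge relation $\Im\frac{\dot z_j-\dot z_i}{z_j-z_i}=\frac{u_i+u_j}{2}$, so that the existence of $u$, its harmonicity for the cotangent Laplacian, the companion relation \eqref{eq:conjharmonic} for $u^*$, and the statement that $u$ is linear exactly when $\dot z$ is an infinitesimal M\"{o}bius transformation all fall out of the single intermediate object $\dot z$. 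Your route --- building $q$ from $(u,u^*)$ as a discrete second derivative and showing that a real combination of \eqref{qsum} and \eqref{zsum} collapses to $\sum_j c_{ij}(u_j-u_i)=0$ --- is viable and closer to the original formulation in the cited reference, but you have left precisely that collapse as ``the real work'': without the intermediate $\dot z$ and the edge relation above, the verification that two complex vertex equations reduce to one real Laplace equation is the entire content of (i)$\Leftrightarrow$(iii), and your proposal names the mechanism (the circumcenter identity and its dual) without executing it. If you want a self-contained argument, either carry out that computation or adopt the paper's factorization through $\dot z$, where harmonicity follows from closing up the identity $\Im\frac{\dot z_j-\dot z_i}{z_j-z_i}=\frac{u_i+u_j}{2}$ around a vertex.
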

	These correspondences have been proved in \cite{Lam2015a}. Since we shall use them in Section \ref{sec:upperbound} and \ref{sec:immersion}, we sketch the relations here.
	
	$(1)\leftrightarrow(2)$: Given a first order deformation $\dot{z}:V \to \mathbb{C}$, we compute the logarithmic change in the cross ratio (see Fig \ref{fig:delaunay} left for the notation)
	\[
	q_{ij}:= \frac{\dot{z}_i-\dot{z}_k}{z_i-z_k}-\frac{\dot{z}_l-\dot{z}_i}{z_l-z_i}+\frac{\dot{z}_j-\dot{z}_l}{z_j-z_l}-\frac{\dot{z}_k-\dot{z}_j}{z_k-z_j} \left( =\frac{\dot{\cratio}_{ij}}{\cratio_{ij}}  \right)
	\]
	which yields a holomorphic quadratic differential. Conversely $\dot{z}$ is determined by $q$ whenever $\dot{z}_i,\dot{z}_j,\dot{z}_k$ is prescribed for some face $\{ijk\}$. Here $q$ is real-valued because $\dot{z}$ preserves $\Imaginary \log \cratio$.
	
	$(2)\leftrightarrow (3)$: $\dot{z}$ preserve $\Imaginary \log \cratio$ if and only if there exists a function $u:V\to\mathbb{R}$ such that for every edge $\{ij\}$
	\[
	\Imaginary \frac{\dot{z}_j-\dot{z}_i}{z_j-z_i} = \frac{u_i + u_j}{2}.
	\]
	The function $u$ is uniquely determined from $\dot{z}$ as follows: Pick a face $\{ijk\}$ then
	\[
	u_i := 	\Imaginary( \frac{\dot{z}_j-\dot{z}_i}{z_j-z_i}+\frac{\dot{z}_i-\dot{z}_k}{z_i-z_k}- \frac{\dot{z}_k-\dot{z}_j}{z_k-z_j})
	\] 
	is independent of the choice of faces containing vertex $i$. It turns out that $u$ is a discrete harmonic function with respect to the cotangent Laplacian. It is a linear function if and only if $\dot{z}$ is induced by an infinitesimal M\"{o}bius transformation. The conjugate harmonic function $u^*:F \to \mathbb{R}$ similarly satisfies
	\begin{align} \label{eq:conjharmonic}
		\Real \frac{\dot{z}_j-\dot{z}_i}{z_j-z_i} = -\frac{u^*_{ijk} \cot \angle ilj  + u^*_{ilj} \cot \angle jki }{\cot \angle ilj  + \cot \angle jki }.
	\end{align}

\section{Circle patterns on complex projective tori} \label{sec:cp1tori}

\subsection{Complex affine torus and holonomy} \label{subsec:holonomy}

For the torus, it is known that every complex projective structure can be reduced to an affine structure. 

\begin{definition}
	A complex affine structure on a surface $M$ is a maximal atlas of charts from open subsets of $M$ to $\mathbb{C}$ such that the transition functions are restrictions of complex affine transformations $z \mapsto a z + b$ for some $a,b \in \mathbb{C}$ with $a \neq 0$.
\end{definition}

It is elementary to characterize the holonomy representation of the fundamental group of the torus in $PSL(2,\mathbb{C})$ up to conjugation. Assume $\gamma_1,\gamma_2$ are generators of the fundamental group of the torus, which satisfy $\gamma_1 \gamma_2 = \gamma_2 \gamma_1$. Pick a developing map $z: \hat{M} \to \mathbb{C} \cup \{\infty\}$ for the complex projective structure. The corresponding holonomy $\rho_1,\rho_2$ in $PGL(2,\mathbb{C})$ satisfy
$\rho_1 \rho_2 =  \rho_2 \rho_1$. If $\rho_1$ is not the identity, there are three types:
\begin{enumerate}[(I)]
	\item $\rho_1$ has only one fixed point. Then it is also the only fixed point of $\rho_2$. We can normalize the developing map by composing a M\"{o}bius transformation such that the fixed point is at infinity and the holonomy $\rho_1,\rho_2$ are translation, i.e. $\exists \beta_r \in \mathbb{C}$ such that \[
	(z\circ \gamma_r)_i = \rho_r (z_i) = z_i + \beta_r  \quad \forall \, i \in \hat{V}.\]
	\item $\rho_1$ has two fixed points and $\rho_2$ does not exchange them. Then they are fixed points of $\rho_2$ as well. We can normalize the developing map such that the fixed points are at the origin and infinity. The holonomy becomes stretched rotation, i.e. $\exists \alpha_r \in \mathbb{C}$ such that \[
	(z\circ \gamma_r)_i = \rho_r (z_i) = \alpha_r z_i  \quad \forall \, i \in \hat{V}.\]
	\item $\rho_1$ has two fixed points and $\rho_2$ does exchange them. We can normalize the developing map such that the fixed points of $\rho_1$ are the origin and infinity. Then the holonomy is of the form
	\[
	\rho_1 (z_i) = - z_i  \quad  \rho_2 (z_i) = \alpha_2 /z_i  \quad \forall \, i \in \hat{V}.\]
\end{enumerate}
In the case where $\rho_1$ is the identity, then we either have case (I) or (II) depending on the number of fixed points of $\rho_2$. Notice that the holonomy in case (I) and (II) becomes affine transformations.

Complex projective structures on the torus are induced by affine structures (See \cite[Ch.9, p.189-192]{Gunning1966} and \cite{Loray2009}). More details can be found in the survey \cite{Baues2014}.

\begin{proposition}[Gunning]\label{prop:gunning}
	Every complex projective structure on a torus can be reduced to an affine structure.
\end{proposition}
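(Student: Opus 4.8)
The plan is to use the Schwarzian derivative to compare the given projective structure with an explicit family of affine structures, exploiting that the canonical bundle of the torus is trivial. First I would record that a complex projective structure induces a conformal structure, so the torus may be written as $\mathbb{C}/\Lambda$ with a global flat coordinate $z$ coming from the holomorphic $1$-form $dz$. This flat (translation) structure is itself affine and will serve as a reference projective structure.

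Next I would invoke the standard fact that, relative to a fixed reference projective structure on a Riemann surface, every other projective structure is determined by its Schwarzian derivative, which is a holomorphic quadratic differential; conversely, two projective structures that induce the same conformal structure and have the same Schwarzian coincide. Applying this with the translation structure as reference, the given projective structure corresponds to a holomorphic quadratic differential $q$ on $\mathbb{C}/\Lambda$. Since $K \cong \mathcal{O}$ on the torus, one has $H^0(K^2) \cong \mathbb{C}$, so $q = c\, dz^2$ for a single constant $c \in \mathbb{C}$.

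I would then realize every such $q$ by an affine structure explicitly. For $a \in \mathbb{C}$ the map $w = e^{az}$ is a local biholomorphism $\mathbb{C} \to \mathbb{C}\setminus\{0\}$ satisfying $w(z+\omega) = e^{a\omega} w(z)$ for each period $\omega \in \Lambda$; hence it is an equivariant developing map into $\mathbb{C}$ whose holonomy is the scaling $w \mapsto e^{a\omega} w$, and the two period transformations commute. This defines a genuine affine structure on $\mathbb{C}/\Lambda$ (for $a=0$ it is the translation structure). A direct computation gives the Schwarzian $S(e^{az}) = -\tfrac{1}{2}a^2$ relative to $z$, so choosing $a$ with $-\tfrac{1}{2}a^2 = c$ produces an affine structure whose underlying projective structure has Schwarzian $c\, dz^2$ and the same conformal structure $\mathbb{C}/\Lambda$. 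By the uniqueness above it agrees with the given projective structure, which is therefore reduced from an affine one.

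The main obstacle is conceptual rather than computational: affine holonomy alone (cases (I) and (II)) would not by itself guarantee an affine structure, since the developing map could a priori meet the common fixed point, and one must also rule out the non-affine case (III). The Schwarzian comparison circumvents both issues at once, since it never requires the developing map to avoid a point and forces the holonomy to be affine a posteriori, so case (III) simply does not occur. The only points needing care are the global well-definedness of the Schwarzian as a section of $K^2$ (using that the reference transitions are translations, under which the Schwarzian is invariant) and the surjectivity of $a \mapsto -\tfrac{1}{2}a^2$ onto $\mathbb{C}$, both of which are routine.
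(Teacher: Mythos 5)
The paper does not prove this proposition at all --- it is quoted as a known result of Gunning with references to \cite{Gunning1966}, \cite{Loray2009}, \cite{Baues2014} --- so there is no internal argument to compare against; what you have written is a correct, self-contained proof of the cited fact, and it is essentially the classical one. The key points all check out: projective structures on a fixed Riemann surface form an affine space over $H^0(K^2)$ via the Schwarzian relative to a reference structure (well defined here because the reference transitions are translations), $H^0(K^2)\cong\mathbb{C}\,dz^2$ on $\mathbb{C}/\Lambda$, the developing maps $w=e^{az}$ define genuine affine structures with Schwarzian $-\tfrac12 a^2\,dz^2$, and $a\mapsto -\tfrac12 a^2$ is onto $\mathbb{C}$, so every Schwarzian is realized by an affine structure and injectivity of the Schwarzian parameterization finishes the argument. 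Your closing remark is also apt: this route sidesteps the issue the paper handles separately in Section \ref{subsec:holonomy}, namely that type (III) holonomy (where the generators swap fixed points) must be excluded --- the paper does this by a liftability argument, whereas your proof rules it out a posteriori since all realized structures are affine. The only cosmetic slip is the parenthetical ``for $a=0$ it is the translation structure'': $e^{0\cdot z}$ is constant, hence not a developing map; for $c=0$ one should instead say the Schwarzian vanishes, so the given structure coincides with the translation structure itself, which is already affine. This does not affect the proof.
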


\begin{corollary}\label{cor:affinedev}
	Every developing map of a Delaunay cross ratio system on a torus can be normalized to have holonomy as complex affine maps (type I or II). In particular, all circumdisks are bounded and the Euclidean image of any two neighboring triangles are locally embedded, i.e. there is no fold.
\end{corollary}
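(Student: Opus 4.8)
The plan is to combine Gunning's reduction (Proposition~\ref{prop:gunning}) with the observation that, among the three classified holonomy types, only (I) and (II) are affine, and then to read off the two geometric consequences from the resulting normalization of the developing map. By Proposition~\ref{prop:cp1} a Delaunay cross ratio system on the torus induces a complex projective structure whose holonomy representation agrees with that of the cross ratio system. By Proposition~\ref{prop:gunning} this projective structure reduces to an affine structure, so its holonomy is conjugate into the affine group $\{z \mapsto az+b : a \in \mathbb{C}^{*},\, b \in \mathbb{C}\}$, every element of which fixes $\infty$. Comparing with the classification of commuting pairs $\rho_1,\rho_2$: in types (I) and (II) the generators share the fixed point $\infty$ after normalization and are affine, whereas in type (III) the generator $\rho_1(z)=-z$ fixes $\{0,\infty\}$ while $\rho_2(z)=\alpha_2/z$ interchanges them, so $\rho_1,\rho_2$ have no common fixed point and cannot be conjugated into the affine group. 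Hence type (III) is excluded and the holonomy is of type (I) or (II).

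Next I would normalize the developing map so that the affine holonomy fixes $\infty$. Since the structure is affine and, because we assume $s\equiv 1$, has no cone points, its developing map is a $\mathbb{C}$-valued local homeomorphism on the whole universal cover; in particular every vertex develops to a finite point $z_i \in \mathbb{C}$, and the cross ratio developing map $z$, which is this developing map restricted to the vertices up to the $PSL(2,\mathbb{C})$ ambiguity, can be taken to avoid $\infty$. With all vertices finite, each circumcircle $C_{ijk}$ is a genuine circle bounding one bounded and one unbounded disk. Because a developing map is an orientation-preserving local homeomorphism, each face develops to a positively oriented triangle $z_iz_jz_k$, whose positively oriented circumdisk $D_{ijk}$ is exactly the bounded one; thus all circumdisks are bounded.

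Finally, the ``no fold'' assertion follows from the same local injectivity: across a shared edge $\{ij\}$ the developing map embeds a neighborhood of the edge, so the images of the two adjacent faces $\{ijk\}$ and $\{jil\}$ lie on opposite sides of the segment $z_iz_j$, i.e.\ the Euclidean images are locally embedded. The \emph{main obstacle} is the middle step: rigorously identifying the cross ratio developing map $z$ with the vertex restriction of the affine developing map supplied by Gunning, so that the abstract affine reduction is upgraded to the concrete statements that $z$ omits $\infty$ and that the positively oriented circumdisks are the bounded ones. Once this identification is in place, both boundedness and the no-fold property are immediate consequences of the fact that the developing map of a cone-angle-free projective structure is an orientation-preserving local homeomorphism.
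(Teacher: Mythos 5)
Your proposal is correct and follows essentially the same route as the paper: apply Gunning's reduction (Proposition~\ref{prop:gunning}) to get an affine structure whose developing map lands in $\mathbb{C}$, conclude that circumdisks avoid $\infty$ and are therefore bounded, and deduce the no-fold property from the resulting positive orientation of neighboring triangles. The extra care you take in excluding type (III) and in identifying the cross ratio developing map with the restriction of the affine developing map is a reasonable elaboration of what the paper leaves implicit, not a different argument.
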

\begin{proof}
	Every Delaunay cross ratio system induces a complex projective structure on the torus and thus an affine structure by Proposition \ref{prop:gunning}. The developing map for the affine structure has image in $\mathbb{C}$. Hence no circumdisk under this developing map passes through infinity and thus every circumdisk is bounded. It implies the Euclidean images of any two neighboring triangles have no fold, i.e. Figure \ref{fig:delaunay} (right) does not occur.
\end{proof}

	The holonomy representation of type III is not liftable to $SL(2,\mathbb{C})$ and hence excluded from complex projective tori. However it does occur for some non-Delaunay cross ratio systems. See \ref{sec:one-vertex} for examples.

\subsection{Delaunay cross ratio systems on the torus} \label{sec:delcrstori}

Making use of the affine structures, in this subsection we aim to prove that

\begin{theorem}\label{prop:kernel}
	The space $Q^{\mathbb{R}}(\cratio)$ of discrete holomorphic quadratic differentials has dimension $2$ for any Delaunay cross ratio system $\cratio$ on the torus. 
\end{theorem}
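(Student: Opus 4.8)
The plan is to read off $\dim_{\mathbb{R}}Q^{\mathbb{R}}(\cratio)$ from the correspondence between discrete holomorphic quadratic differentials and discrete harmonic functions (Proposition~\ref{prop:hqdharmonic}), transported to the closed torus through its universal cover. By Corollary~\ref{cor:affinedev} I may assume the developing map $z$ is affine; then all circumdisks are bounded, the cotangent weights $c_{ij}$ of Definition~\ref{def:cotlap} are well defined, and the Delaunay hypothesis gives $c_{ij}\ge 0$ with strict positivity on the connected spanning subgraph $(V,E_+)$. Note that the general count of Proposition~\ref{prop:lowerbound} only yields $\dim_{\mathbb{R}}Q^{\mathbb{R}}\ge 6g-6=0$, so the content of the theorem is a sharp bound that must come from the affine geometry rather than from dimension counting alone.

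First I would set up the local dictionary on $\hat M$. Given $q\in Q^{\mathbb{R}}(\cratio)$, Proposition~\ref{prop:hqdharmonic} yields an infinitesimal deformation $\dot z$ of the developing map, unique up to an infinitesimal M\"{o}bius transformation, and a discrete harmonic function $u$ on $\hat V$ with $\tfrac{u_i+u_j}{2}=\Im\frac{\dot z_j-\dot z_i}{z_j-z_i}$. Since $q$ descends to $M$, differentiating the equivariance $z_t\circ\gamma=\rho_{\gamma,t}\circ z_t$ shows that $\dot z$ is equivariant up to the infinitesimal variation $\dot\rho_\gamma$ of the affine holonomy, and consequently $u$ is \emph{not} $\pi_1(M)$-invariant but transforms under each deck transformation $\gamma$ by the addition of a fixed affine function read off from $\dot\rho_\gamma$. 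The genuine invariant attached to $q$ is thus the class of this cocycle of affine functions, equivalently the infinitesimal holonomy variation $[\dot\rho]$ in the character variety, taken modulo the coboundaries produced by the M\"{o}bius gauge.

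The upper bound is the heart of the matter and is where the Delaunay condition is used. I would show the assignment $q\mapsto[\dot\rho]$ is injective: if $[\dot\rho]=0$ then, after a M\"{o}bius normalisation, $u$ becomes genuinely $\pi_1(M)$-periodic, hence a single-valued discrete harmonic function on the \emph{closed} torus; the maximum principle for the cotangent Laplacian---valid precisely because the weights are non-negative and positive on the connected subgraph $(V,E_+)$---forces $u$ to be constant, so $u$ is linear and $q=0$ by Proposition~\ref{prop:hqdharmonic}. It then remains to bound the target. The space of angle-preserving (i.e.\ $\Arg\cratio$-fixing) holonomy variations of an abelian representation of $\pi_1(M)$ is two real-dimensional: the tangent space to the relevant component of the character variety is two complex-dimensional, carried by the two multipliers, and the reality constraint coming from fixing $\Theta$ selects a real form of it. Matching this against two explicit elements of $Q^{\mathbb{R}}(\cratio)$---built from the angle-preserving infinitesimal variations of the two holonomy multipliers and checked directly against \eqref{qsum}--\eqref{zsum}---gives the lower bound, and hence $\dim_{\mathbb{R}}Q^{\mathbb{R}}(\cratio)=2$.

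I expect the main obstacle to be twofold. First, the bookkeeping of the multivaluedness: the naive hope that $Q^{\mathbb{R}}$ is the space of single-valued harmonic functions is false, because a genuine deformation produces a harmonic $u$ that picks up a non-constant affine function around the cycles of $M$; one must track the whole $\pi_1(M)$-cocycle, and it is exactly the degeneracy of \eqref{zsum} forced by the affine holonomy that creates the two dimensions beyond the generic count $6g-6$. Second, and more delicate, is pinning the target dimension to \emph{exactly} two uniformly in $\cratio$, including at the single Euclidean cross ratio system: there the holonomy is a translation representation, the character variety is singular, and linear functions themselves become $\pi_1(M)$-periodic, so the vanishing of $q$ must be extracted purely from the cocycle class together with the maximum principle rather than from any canonical choice of $u$. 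This is the step I expect to require the most care, and where Rivin's rigidity results for Delaunay circle patterns are likely to be invoked.
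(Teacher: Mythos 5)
Your overall skeleton (affine developing map via Corollary~\ref{cor:affinedev}, the dictionary $q\leftrightarrow u$ of Proposition~\ref{prop:hqdharmonic}, and the maximum principle to kill the kernel of $q\mapsto[\dot\rho]$) is the same as the paper's, and your injectivity step is sound. The genuine gap is in bounding the target. You assert that the image of $q\mapsto[\dot\rho]$ lies in a two-real-dimensional space because ``the reality constraint coming from fixing $\Theta$ selects a real form'' of the two-complex-dimensional tangent space to the character variety. Nothing in the definition of $Q^{\mathbb{R}}(\cratio)$ supplies such a constraint a priori: a formal tangent vector $q$ produces a harmonic $u$ whose periods $(u\circ\gamma_r)-u=\langle a_r,z\rangle+b_r$ live in $\mathbb{C}^2\times\mathbb{R}^2\cong\mathbb{R}^6$, and quotienting only by the periods of linear functions leaves four real dimensions, not two. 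The missing two conditions are exactly what the paper extracts in Lemma~\ref{lem:periodicu}: applying the commutativity $\rho_1\rho_2=\rho_2\rho_1$ to the period cocycle of $u$ gives only the \emph{real} part of the relation $a_1\bar{\beta}_2=a_2\bar{\beta}_1$ (resp.\ $a_2(\bar{\alpha}_1-1)=a_1(\bar{\alpha}_2-1)$); one must also run the same argument on the \emph{conjugate} harmonic function $u^*$ (whose periods are circumcenter-linear, Equation~\eqref{eq:charmonic}) to get the imaginary part. Your proposal never invokes $u^*$, so it cannot close this step. Moreover, your fallback --- invoking ``Rivin's rigidity results'' here --- is circular: Proposition~\ref{prop:rivin} controls the nonlinear solution set, and concluding from it that the kernel of the linearization is two-dimensional presupposes that $P(\Theta)$ is a $2$-manifold, which is what Theorem~\ref{prop:kernel} is being used to establish.

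Two smaller points. For the lower bound, the paper does not build explicit deformations from the holonomy multipliers (which would require differentiability of Rivin's family in $(A_1,A_2)$, an extra argument); it simply exhibits two explicit linear dependencies among the $3|V|$ real equations cutting out $Q^{\mathbb{R}}(\cratio)$ --- namely $\sum_i\sum_j q_{ij}/(z_j-z_i)=0$ in the translational case and $\sum_i\sum_j z_i q_{ij}/(z_j-z_i)=0$ in the stretched-rotation case --- giving $\dim\ge|E|-3|V|+2=2$ by pure linear algebra. Finally, your anticipated difficulty at the Euclidean point is misplaced for this theorem: translations are handled uniformly in Lemma~\ref{lem:dimU} (and linear functions do \emph{not} become periodic there; their periods are the constants $\langle a,\beta_r\rangle$). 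The Euclidean torus only becomes problematic later, in Proposition~\ref{prop:injective}, where the Dirichlet-energy/period argument breaks down.
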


\begin{proof}
	It follows from Lemma \ref{lem:g1lower} that $\dim_{\mathbb{R}} Q^{\mathbb{R}}(\cratio) \geq 2$ and Lemma \ref{lem:g1upper} that   $\dim_{\mathbb{R}} Q^{\mathbb{R}}(\cratio) \leq 2$.
\end{proof}

Before discussing the proofs of the lemmas, we mention here an immediate consequence:

\begin{corollary}\label{cor:realsurface}
	The space of cross ratio systems with prescribed Delaunay angle structure $\Theta$ is a real analytic surface in the algebraic variety $\Phi^{-1}\{0\}$.
\end{corollary}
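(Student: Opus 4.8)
The plan is to exhibit $P(\Theta)$ as the common zero set of finitely many real-analytic functions and to prove smoothness by matching its local dimension with the dimension of its Zariski tangent space. Write $G := \Phi \circ \exp_\Theta : \mathbb{R}^E \to \mathbb{C}^{2V}$; since $\Phi$ is polynomial and $\exp_\Theta$ is a real-analytic diffeomorphism onto its image, $G$ is real-analytic, and $\exp_\Theta$ identifies $P(\Theta)$ with the real-analytic set $Z := G^{-1}\{0\}$, which sits as a real slice inside the complex algebraic variety $\Phi^{-1}\{0\}$ of all cross ratio systems. At every $p \in Z$ the kernel of the Jacobian is by definition $Q^{\mathbb{R}}(\cratio)$, so Theorem \ref{prop:kernel} gives $\dim_{\mathbb{R}}\ker DG_p = 2$. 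Since the Zariski tangent space of the reduced variety $Z$ at $p$ is contained in $\ker DG_p$, this already forces the upper bound $\dim_p Z \leq 2$ on the local dimension.

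The heart of the argument, and what I expect to be the main obstacle, is the matching lower bound $\dim_p Z \geq 2$. This cannot be extracted from the kernel computation alone: constancy of $\dim\ker DG$ along the zero set does not by itself yield a manifold, as the map $(x,y)\mapsto x^2$ shows, where the kernel is two-dimensional all along the one-dimensional zero set. In fact the dimension count of Proposition \ref{prop:lowerbound} suggests the generic rank of $G$ is full, so $Z$ lies in the locus where the rank drops, and one must produce genuine deformations rather than merely infinitesimal ones. I would obtain them by integrating the two linearly independent discrete holomorphic quadratic differentials supplied by the lower-bound half of Theorem \ref{prop:kernel} (Lemma \ref{lem:g1lower}): under the correspondence of Proposition \ref{prop:hqdharmonic} these are the infinitesimal deformations of the developing map preserving $\Theta$, and I would realize them as tangent to an honest two-parameter family of cross ratio systems obtained by deforming the underlying complex affine structure of the torus (whose affine moduli are two-real-dimensional). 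Verifying that this family stays inside $P(\Theta)$, i.e. keeps the arguments of the cross ratios equal to $\Theta$ while sweeping out two independent directions, is the delicate point.

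Granting the two bounds, the conclusion is immediate: at every $p \in Z$ one has $2 \leq \dim_p Z \leq \dim_{\mathbb{R}} T_p^{\mathrm{Zar}} Z \leq \dim_{\mathbb{R}}\ker DG_p = 2$, so all these quantities equal $2$. The equality of the local dimension with the Zariski tangent dimension is exactly the criterion for a reduced real-analytic set to be smooth at $p$; hence $Z$, and therefore $P(\Theta)$, is a real-analytic submanifold of dimension $2$ of $\Phi^{-1}\{0\}$, which is the assertion of the corollary. Real-analyticity of the submanifold structure is inherited from that of $G$ and requires no separate argument.
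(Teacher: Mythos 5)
Your proposal departs from the paper's own proof, which is a one-line application of the constant rank theorem: since $Q^{\mathbb{R}}(\cratio)=\Ker D(\Phi\circ\exp_\Theta)$ has dimension $2$ at every point of the zero set by Theorem \ref{prop:kernel}, the paper concludes directly that the zero set is a real analytic surface. You are right to be wary of deducing smoothness from the kernel dimension on the zero set alone --- the constant rank theorem needs constant rank on a \emph{neighborhood}, and Theorem \ref{prop:kernel} only controls the rank at points where $\Phi\circ\exp_\Theta$ vanishes (your $(x,y)\mapsto x^2$ example makes the point). So your two-sided strategy (Zariski tangent space for the upper bound, genuine deformations for the lower bound, then the smoothness criterion for reduced real analytic germs) is a legitimate and in some respects more careful route; the upper-bound half and the concluding criterion are fine as stated.

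The gap is that the lower bound $\dim_p Z\ge 2$ is exactly the step you defer as ``the delicate point,'' and it cannot be obtained by softly ``integrating'' the two infinitesimal quadratic differentials of Lemma \ref{lem:g1lower}: producing honest solutions with $\Arg\cratio\equiv\Theta$ is a global existence problem, not a consequence of the linearized picture. The needed ingredient is precisely Proposition \ref{prop:rivin} (Rivin's variational theorem): for every $(A_1,A_2)\in\mathbb{R}^2$ there is a unique affine torus with $\Arg\cratio\equiv\Theta$ and $\log|\alpha_r|=A_r$, and this parametrization is continuous and two-to-one onto $P(\Theta)$ away from the Euclidean point, giving a topologically two-dimensional family of cross ratio systems through every point of $Z$. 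The paper itself notes, immediately after the corollary, that the statement ``might also be induced from Rivin's result,'' the required care being the branch point of the $(A_1,A_2)\mapsto(-A_1,-A_2)$ identification at the Euclidean torus. Your plan closes once you invoke Proposition \ref{prop:rivin} (and the continuity/injectivity of that parametrization) for the lower bound; without it, the argument is incomplete at its crucial step.
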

\begin{proof}
	Recall that $Q^{\mathbb{R}}(\cratio)$ is precisely the kernel of the Jacobian $D(\Phi\circ \exp_{\Theta})$ at $\exp^{-1}(\cratio)$. Theorem \ref{prop:kernel} implies that the Jacobian $D(\Phi\circ \exp_{\Theta})$ has constant rank along $(\Phi\circ \exp_{\theta})^{-1}\{0\} \subset \mathbb{R}^{E}$. The constant rank theorem yields that $(\Phi\circ \exp_{\theta})^{-1}\{0\} \subset \mathbb{R}^{E} \subset \mathbb{C}^{E}$ is a real analytic surface. 
\end{proof}

The corollary might also be induced from Rivin's result in Section \ref{sec:Rivin}, though it requires special attention when considering Euclidean tori. In the following two subsections, we provide a proof involving discrete harmonic functions instead, which will be an essential tool to study the projection from the space of Delaunay cross ratio systems to the Teichm\"{u}ller space in Section \ref{sec:immersion}.

\subsubsection{Lower bound for $\dim_{\mathbb{R}} Q^{\mathbb{R}}(\cratio)$} \label{sec:lower}
Notice that the lower bound in Proposition \ref{prop:lowerbound} does not provide any information for $\dim_{\mathbb{R}} Q^{\mathbb{R}}(\cratio)$ since $6g-6=0$ for the torus ($g=1$).

\begin{lemma}\label{lem:g1lower}
	For a Delaunay cross ratio system $\cratio$ on the torus, we have $\dim_{\mathbb{R}} Q^{\mathbb{R}}(\cratio) \geq 2$.
\end{lemma}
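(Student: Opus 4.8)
The plan is to produce a two–dimensional space of discrete holomorphic quadratic differentials by combining the affine normalization of the developing map with the correspondence to discrete harmonic functions, and to treat the Euclidean locus — the genuine obstacle — by a separate direct argument.

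First I would use Corollary~\ref{cor:affinedev} to normalize the developing map $z:\hat V\to\mathbb{C}$ so that the holonomy is affine, $z\circ\gamma_r=\alpha_r z+\beta_r$, and split into the two cases it allows: the Euclidean (translation) case $\alpha_r\equiv 1$, and the generic (stretch–rotation) case $\beta_r\equiv 0$, $\alpha_r\neq 1$. In both cases the cotangent weights $c_{ij}$ of Definition~\ref{def:cotlap} depend only on the Euclidean shapes of the triangles and hence descend to $M$; the Delaunay hypothesis forces $c_{ij}\ge 0$, so the cotangent Laplacian is a bona fide Hodge Laplacian on the closed torus and its space of discrete harmonic $1$-forms has dimension $b_1(M)=2$.

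For the generic case I would run the correspondence of Proposition~\ref{prop:hqdharmonic} on the universal cover. A discrete harmonic $1$-form lifts to an exact form $du$ whose primitive $u$ is additively automorphic, $u\circ\gamma_r=u+s_r$; the associated infinitesimal deformation $\dot z$ preserves $\Arg\cratio\equiv\Theta$, and one checks that $(\dot z_j-\dot z_i)/(z_j-z_i)$ is shifted by the \emph{constant} $\dot\alpha_r/\alpha_r$ under $\gamma_r$. Being constant, this shift cancels in the alternating four–term expression for $q$ in Proposition~\ref{prop:hqdharmonic}, so $q$ is $\pi_1$–invariant and defines an element of $Q^{\mathbb{R}}(\cratio)$. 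The resulting linear map from harmonic $1$-forms to $Q^{\mathbb{R}}(\cratio)$ is injective: an element of its kernel has $\dot z$ induced by a M\"obius transformation, i.e.\ a linear primitive $u$, and when $\alpha_r\neq 1$ the only additively automorphic linear functions are the constants, whose $1$-form vanishes. Hence $\dim_{\mathbb{R}}Q^{\mathbb{R}}(\cratio)\ge 2$ in the generic case.

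The Euclidean case is where the argument must change, since there every additively automorphic harmonic function is linear and the construction above collapses to $q\equiv 0$; this is precisely the degeneracy anticipated after Corollary~\ref{cor:realsurface}, and it is the main obstacle. Here I would argue directly from \eqref{qsum} and \eqref{zsum}, which for real $q$ realize $Q^{\mathbb{R}}(\cratio)$ as the kernel of a square real–linear system of size $3|V|$ (one real equation \eqref{qsum} and one complex equation \eqref{zsum} per vertex, using $|E|=3|V|$). Since the holonomy is now a translation, the edge vectors $z_j-z_i$ are $\pi_1$-invariant and descend to $M$; pairing the two ends of each edge shows that the equation obtained by summing \eqref{zsum} over all vertices is $\sum_{\{ij\}} q_{ij}\big(1/(z_j-z_i)+1/(z_i-z_j)\big)=0$ identically, with loop edges cancelling because $z_{\gamma i}-z_i=\beta_\gamma$ is constant. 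Thus the real and imaginary parts of this summed equation are two independent linear dependencies among the $3|V|$ constraints, so the rank is at most $3|V|-2$ and again $\dim_{\mathbb{R}}Q^{\mathbb{R}}(\cratio)\ge 2$. The two delicate points to verify are the injectivity of the $1$-form map in the generic case and the exactness of the edge cancellation, including loops, in the Euclidean case; both rest on the affine normalization furnished by Corollary~\ref{cor:affinedev}.
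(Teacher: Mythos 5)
Your argument is correct, and it is worth separating the two cases when comparing it with the paper. In the translation (Euclidean) case you do exactly what the paper does: the kernel of $1/(z_j-z_i)$ under deck transformations makes the summed equation $\sum_i\sum_j q_{ij}/(z_j-z_i)=0$ an identity in $q$ (each unoriented edge contributes two cancelling terms), so the $3|V|$ real constraints on the $|E|=3|V|$ unknowns have corank at least $2$. In the stretch--rotation case you genuinely diverge. The paper stays with the same elementary rank count: when $\rho_r(z)=\alpha_r z$ the quantity $z_i\,q_{ij}/(z_j-z_i)$ (rather than $q_{ij}/(z_j-z_i)$) is deck-invariant, and summing it over a fundamental domain gives $-\tfrac12\sum_{i,j}q_{ij}$, which exhibits one complex linear dependence among the complex and real constraints together and again yields corank $\geq 2$. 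You instead produce the two dimensions explicitly, as the image of the space of closed and co-closed cotangent $1$-forms under the correspondence of Proposition~\ref{prop:hqdharmonic}, with injectivity coming from the fact that an additively automorphic linear function is constant once some $\alpha_r\neq 1$. This route is more conceptual (the two dimensions literally realize $H^1$ of the torus) and it anticipates the machinery the paper reserves for the upper bound (Lemma~\ref{lem:periodicu}) and for Proposition~\ref{prop:injective}; the price is that you must check two things you correctly flag plus one you do not: (a) that the conjugate function $u^*$ also has constant periods, so that the real part of $(\dot z_j-\dot z_i)/(z_j-z_i)$ shifts by a constant and the alternating four-term expression for $q$ descends --- this follows from $u^*_{ijk}-u^*_{jil}=\tfrac12 c_{ij}(u_j-u_i)$ together with deck-invariance of $c$; and (b) that the space of harmonic $1$-forms still has dimension at least $2$ when some cotangent weights vanish (the Hodge pairing can degenerate on edges with $\Arg\cratio_{ij}=0$), which holds because the closedness conditions have rank at most $|F|-1$ and the co-closedness conditions rank at most $|V|-1$, so the intersection has dimension at least $|E|-|F|-|V|+2=2$. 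Both verifications go through, so your proof is complete; the paper's version is shorter and uniform across the two cases, while yours identifies the source of the two-dimensionality.
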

\begin{proof}
	We show that there is linearly dependence in the equations for discrete holomorphic quadratic differentials using the affine developing map $z: \hat{V} \to \mathbb{C}$ in Corollary \ref{cor:affinedev}. There are two cases:  
	
	Case (1): the holonomy consists of translations (type I). Pick an arbitrary function $q:E \to \mathbb{R}$. Then the complex number $q_{ij}/(z_j-z_i)$ is well defined on the oriented edges $e_{ij}$ of $M$. Namely, we have
	\[
	q_{ij}/(\rho_k(z_j)-\rho_k(z_i)) = q_{ij}/(z_j-z_i)
	\]
	for $k=1,2$. Thus
	\[
	\sum_i \sum_j q_{ij}/(z_j-z_i) = 0
	\]
	where the sum on the right is over the neighboring vertices $j$ of $i$ on the universal cover and the sum on the left is over all the vertices  $i$ in a fundamental domain.
	
	Case (2): the holonomy consists of stretched rotations (type II). Pick an arbitrary $q:E\to \mathbb{R}$ satisfying $\sum_j q_{ij}=0$. The complex number $z_i q_{ij}/(z_j-z_i)$ is well defined on the oriented edges $e_{ij}$ of $M$. Namely, we have
	\[
	\rho_k(z_i) q_{ij}/(\rho_k(z_j)-\rho_k(z_i)) = z_i q_{ij}/(z_j-z_i)
	\]
	for $k=1,2$. Thus we always have
	\[
	\sum_i  \sum_j z_i \frac{q_{ij}}{(z_j-z_i)} = \frac{1}{2}\sum_i  \sum_j \frac{q_{ij} (z_i -z_j)}{(z_j-z_i)} = -\frac{1}{2}\sum_i  \sum_j q_{ij} =0
	\]
	where the sum on the right is over the neighboring vertices $j$ of $i$ on the universal cover and the sum on the left is over all the vertices  $i$ in a fundamental domain.
	
	In both cases, the complex constraints
	\[
	\sum_j q_{ij}/(z_j -z_i) =0 \quad \forall i \in V
	\]
are linearly dependent. Hence
	\[
	\dim_{\mathbb{R}} Q^{\mathbb{R}}(\cratio) \geq |E| - 3|V| + 2 = 2.
	\]
\end{proof}

\subsubsection{Upper bound for $\dim_{\mathbb{R}} Q^{\mathbb{R}}(\cratio)$} \label{sec:upperbound}

We make use of the maximum principle for the cotangent Laplacian and the correspondence in Proposition \ref{prop:hqdharmonic} in order to obtain an upper bound for $\dim_{\mathbb{R}} Q^{\mathbb{R}}(\cratio)$.

\begin{proposition}
	Suppose $z:\tilde{V}\to \mathbb{C}$ is a developing map with affine holonomy induced from a Delaunay cross ratio system on a triangulated torus $M=(V,E,F)$. Then the cotangent Laplacian has the following property:
	\begin{enumerate}[(i)]
		\item the cotangent weights are invariant under deck transformations, i.e. $c_{\gamma(ij)}=c_{ij}$	for any deck transformation $\gamma$ on the universal cover;
		 \item the cotangent weights are non-negative;
		 \item the maximum principle holds: a discrete harmonic function $u:\tilde{V} \to \mathbb{R}$ achieving a local minimum or maximum at an interior vertex must be constant. 
	\end{enumerate}
\end{proposition}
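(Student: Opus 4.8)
The plan is to establish the three assertions in the stated order, since (ii) is the engine that drives (iii). For (i), I would use directly that the holonomy is affine. Each deck transformation $\gamma$ acts on the developing image by a complex affine map $z\mapsto a z+b$ with $a\in\mathbb{C}^{*}$ (a translation in type I, a stretched rotation in type II), which is an orientation-preserving similarity of the plane and hence preserves every triangle angle. Since the weight $c_{ij}=\cot\angle jki+\cot\angle ilj$ is assembled from such angles, and each angle $\angle jki$ is unchanged when $z_i,z_j,z_k$ are all transported by $\gamma$, we obtain $c_{\gamma(ij)}=c_{ij}$. This step is routine.

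For (ii), the crucial input is Corollary \ref{cor:affinedev}: with the affine developing map there are no folds, so each triangle is positively oriented and both opposite angles $\alpha:=\angle jki$ and $\beta:=\angle ilj$ lie in $(0,\pi)$. I would then invoke the empty-circle characterization of the Delaunay condition, namely that $\Arg\cratio\in[0,\pi)$ forces $z_l\notin D_{ijk}$. Because $z_k$ and $z_l$ lie on opposite sides of the line through $z_i,z_j$ (no fold), the inscribed-angle theorem applied to the cyclic configuration $z_i,z_k,z_j,z_l$ shows that $z_l\notin D_{ijk}$ is equivalent to $\alpha+\beta\le\pi$, with equality exactly when the four points are concyclic. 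The identity $\cot\alpha+\cot\beta=\sin(\alpha+\beta)/(\sin\alpha\sin\beta)$, combined with $\alpha,\beta\in(0,\pi)$ and $\alpha+\beta\le\pi$, then yields $c_{ij}\ge 0$. I expect this to be the main obstacle: correctly translating the analytic condition $\Arg\cratio\in[0,\pi)$ into the geometric inequality $\alpha+\beta\le\pi$, keeping careful track of orientation and of the open-disk convention, and verifying that $c_{ij}=0$ occurs precisely when $\Arg\cratio_{ij}=0$.

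For (iii), I would run the standard discrete maximum principle. If $u$ is harmonic and attains a local maximum at an interior vertex $i_0$, then from $\sum_j c_{i_0 j}(u_j-u_{i_0})=0$ with every $c_{i_0 j}\ge 0$ by (ii) and every $u_j\le u_{i_0}$, each summand is nonpositive, forcing $u_j=u_{i_0}$ for all $j$ with $c_{i_0 j}>0$. The key structural observation, recorded at the end of (ii), is that $c_{ij}>0$ holds exactly on the edge set $E_{+}=\{ij: \Arg\cratio_{ij}\neq 0\}$, since $c_{ij}=0$ is equivalent to $\alpha+\beta=\pi$, i.e.\ to the four points being concyclic, i.e.\ to $\Arg\cratio_{ij}=0$. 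Because $(V,E_{+})$ is the $1$-skeleton of a CW decomposition of the connected surface $M$, its lift $(\tilde{V},\tilde{E}_{+})$ to the universal cover is connected; propagating the equality $u_j=u_{i_0}$ along positive-weight edges therefore forces $u$ to be constant on $\tilde V$. The case of a local minimum is identical after replacing $u$ by $-u$.
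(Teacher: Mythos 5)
Your proposal is correct and follows essentially the same route as the paper: affine holonomy preserves angles for (i), the no-fold consequence of Corollary \ref{cor:affinedev} plus the identity $\cot\alpha+\cot\beta=\sin(\alpha+\beta)/(\sin\alpha\sin\beta)$ with the Delaunay bound on $\alpha+\beta$ for (ii), and the standard non-negative-weight maximum principle for (iii). The only (welcome) difference is that you are more explicit than the paper in (iii) about why equality propagates, namely that $c_{ij}>0$ exactly on $E_{+}$ and that the lift of the $1$-skeleton $(V,E_{+})$ to the universal cover is connected.
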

\begin{proof}
	Firstly, the cotangent weights depend on the angles of triangles which are preserved by the affine holonomy. Hence the weights are invariant under deck transformations.
	
	Secondly, by Corollary \ref{cor:affinedev}, the circumdisks are bounded and hence all faces under $z$ are counterclockwisely oriented. By Definition \ref{def:cotlap}, all the angles within faces take positive values and
	\[
	c_{ij} = \cot \angle jki + \cot \angle ilj = \frac{\sin(\angle jki  + \angle ilj)}{\cos \angle jki  \, \cos  \angle jki} =  \frac{\sin(\Arg \cratio_{ij}) }{\cos \angle jki  \, \cos  \angle jki} \geq 0
	\]
	
	Thirdly, since the cotangent weights are non-negative, the proof for the maximum principle is a standard argument for graph Laplacian: Suppose $u$ is a discrete harmonic function with a local minimum at an interior vertex $i$ then
	\[
	0= \sum_j c_{ij} (u_j-u_i) \geq 0
	\]
	The inequality is strict unless $u_j = u_i$ for all $\{ij\} \in E$ with $c_{ij}\neq0$. Because the graph is connected, it implies $u$ is constant.
\end{proof}

Under an affine developing map, we can characterize discrete harmonic functions on the universal cover that correspond to discrete holomorphic quadratic differentials on the torus ascertained by Proposition \ref{prop:hqdharmonic}. 

\begin{lemma}\label{lem:periodicu}
	Suppose $z$ is a developing map with affine holonomy by $\rho_r = \alpha_r z_j + \beta_r$
	for $\alpha_r, \beta_r \in \mathbb{C}$ and $r=1,2$. Let $q:\hat{E} \to \mathbb{R}$ be a lift of a holomorphic quadratic differential from $M$, which satisfies $ (q\circ \gamma)_{ij} = q_{ij}$ and $u$ be one of its corresponding harmonic functions. Then there exists $a_r \in \mathbb{C}, b_r \in \mathbb{R}$ such that
	\[
	(u \circ \gamma_r)_i - u_i = \langle a_r, z_i \rangle + b_r
	\]
	with 	\begin{align*}
	a_1 \bar{\beta}_2 &= a_2 \bar{\beta}_1 \\
	a_2 (\bar{\alpha}_1 - 1) &= a_1  (\bar{\alpha}_2 - 1) .
	\end{align*}
\end{lemma}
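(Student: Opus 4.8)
The plan is to lift the correspondence of Proposition~\ref{prop:hqdharmonic} to the universal cover and to track how each object transforms under the generating deck transformations $\gamma_1,\gamma_2$. Fix an infinitesimal deformation $\dot z:\hat V\to\mathbb{C}$ realizing $q$ through $q_{ij}=\dot{\cratio}_{ij}/\cratio_{ij}$ as in part $(1)\leftrightarrow(2)$; recall that $\dot z$ is determined by $q$ only up to an infinitesimal M\"obius field $w\mapsto p+tw+sw^2$, and that $u$ is recovered from $\dot z$ by $\Imaginary\frac{\dot z_j-\dot z_i}{z_j-z_i}=\frac{u_i+u_j}{2}$. Thus it suffices to understand $\dot z\circ\gamma_r$.

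The first key step is to show that
\[
\dot z\circ\gamma_r=\alpha_r\,\dot z+P_r(z),\qquad P_r(w)=p_r+t_rw+s_rw^2
\]
for some $p_r,t_r,s_r\in\mathbb{C}$. Since $z\circ\gamma_r=\alpha_r z+\beta_r$, substituting into the defining formula for $q$ and using $z_a-z_b=\alpha_r^{-1}(z_{\gamma_r(a)}-z_{\gamma_r(b)})$ shows that $\dot z\circ\gamma_r$ produces, relative to $z$, the logarithmic cross-ratio change $\alpha_r\,q_{\gamma_r(ij)}=\alpha_r q_{ij}$, where the invariance $q\circ\gamma_r=q$ is used. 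As $\alpha_r\dot z$ produces the same change and the map $\dot z\mapsto q$ is $\mathbb{C}$-linear with kernel exactly the infinitesimal M\"obius fields, the difference $\dot z\circ\gamma_r-\alpha_r\dot z$ is such a field $P_r$. Now $(u\circ\gamma_r)$ is the harmonic function attached to $\dot z\circ\gamma_r$ relative to $z\circ\gamma_r$, so the defining relation for $u$, applied at the shifted vertices, yields with $v_i:=(u\circ\gamma_r)_i-u_i$
\[
v_i+v_j=2\Imaginary\Big(\tfrac{t_r}{\alpha_r}\Big)+2\Imaginary\Big(\tfrac{s_r z_i}{\alpha_r}\Big)+2\Imaginary\Big(\tfrac{s_r z_j}{\alpha_r}\Big).
\]
Writing $B_i:=2\Imaginary(s_rz_i/\alpha_r)$, the quantity $v_i-B_i$ is constant across every edge, hence constant on the connected triangulation because its triangles are odd cycles; therefore $v_i=\langle a_r,z_i\rangle+b_r$ with $a_r=2\mathbf{i}\,\overline{s_r}/\overline{\alpha_r}$ and $b_r=\Imaginary(t_r/\alpha_r)$, which is the asserted linear form.

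The two relations among the $a_r$ now come from $\gamma_1\gamma_2=\gamma_2\gamma_1$. Expanding $\dot z\circ\gamma_1\gamma_2$ in two ways via the cocycle above yields the identity
\[
\alpha_1 P_2+P_1\circ\rho_2=\alpha_2 P_1+P_2\circ\rho_1,
\]
where $\rho_r(w)=\alpha_r w+\beta_r$; both sides are polynomials of degree $\le 2$ agreeing at every $z_i$, hence equal identically. Comparing the coefficients of $w^2$ and $w^1$ gives
\[
s_2\,\alpha_1(\alpha_1-1)=s_1\,\alpha_2(\alpha_2-1),\qquad s_1\,\alpha_2\beta_2=s_2\,\alpha_1\beta_1,
\]
the $w^1$-comparison being clean because the $t_r$-terms cancel. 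Taking complex conjugates and inserting $a_r=2\mathbf{i}\,\overline{s_r}/\overline{\alpha_r}$ converts these exactly into $a_2(\overline{\alpha_1}-1)=a_1(\overline{\alpha_2}-1)$ and $a_1\overline{\beta_2}=a_2\overline{\beta_1}$; the $w^0$-comparison only constrains $p_r,t_r$ and is not needed.

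I expect the main obstacle to be the first step: verifying carefully that $\dot z\circ\gamma_r-\alpha_r\dot z$ is genuinely an infinitesimal M\"obius field, which rests on the kernel of $\dot z\mapsto q$ on the simply connected $\hat M$ being precisely $\{w\mapsto p+tw+sw^2\}$. Once this equivariance with cocycle $P_r$ is in hand, the reduction of $v_i+v_j$ to a linear function via the odd-cycle argument and the polynomial coefficient matching are routine; the only side point is that the coefficient comparison requires the $z_i$ to take at least three distinct values, which holds because any single triangle of the developing map is nondegenerate.
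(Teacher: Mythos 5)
Your proof is correct, but it runs along a genuinely different track from the paper's. The paper works entirely at the level of the harmonic function: it observes that $v_r = u\circ\gamma_r - u$ is again harmonic (deck-invariance of the cotangent weights) with vanishing associated quadratic differential (since $q\circ\gamma = q$), hence linear by Proposition~\ref{prop:hqdharmonic}; it then extracts $a_2(\bar\alpha_1-1)=a_1(\bar\alpha_2-1)$ and only the \emph{real} part of $a_1\bar\beta_2=a_2\bar\beta_1$ from commutativity of the $u$-periods, and has to invoke the conjugate harmonic functions $v_r^*$ (of the form $\langle a_r,-\mathbf{i}z^*\rangle+\tilde b_r$) and repeat the commutativity argument to recover the imaginary part. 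You instead lift everything to the deformation field $\dot z$, establish the cocycle $\dot z\circ\gamma_r=\alpha_r\dot z+P_r(z)$ with $P_r$ an infinitesimal M\"obius (quadratic) field, recover the linear form of $v_r$ from the edge relation plus the odd-cycle argument, and then obtain \emph{both} relations in one stroke by comparing the $w^2$- and $w^1$-coefficients of the polynomial identity $\alpha_1P_2+P_1\circ\rho_2=\alpha_2P_1+P_2\circ\rho_1$; I checked that with $a_r=2\mathbf{i}\,\overline{s_r}/\overline{\alpha_r}$ your coefficient identities conjugate exactly to the asserted relations. The step you flag as the main obstacle --- that the kernel of $\dot z\mapsto q$ on the simply connected cover is precisely the quadratic fields --- is indeed the crux, but it is exactly the uniqueness-up-to-infinitesimal-M\"obius statement in Proposition~\ref{prop:hqdharmonic} (propagation across a face forces $\dot z_l=0$ once three vertex values vanish, so the kernel is $3$-dimensional and must coincide with the quadratic fields since nondegenerate triangles give at least three distinct $z_i$). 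What your route buys is a unified derivation that bypasses the conjugate harmonic function entirely; what the paper's route buys is that it never needs to verify the M\"obius cocycle for $\dot z$ and stays within the harmonic-function formalism that is reused later in Section~\ref{sec:immersion}.
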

\begin{proof}
	We define $v_{r,i}:= 	(u \circ \gamma_r)_i - u_i $ for every vertex $i$ and $r=1,2$. Then $v_1,v_2$ are discrete harmonic functions again since the cotangent weights are invariant under deck transformations. In addition the corresponding holomorphic quadratic differential of $v_1,v_2$ are identically zero because $q\circ\gamma =q$. Hence $v_1,v_2$ are linear functions, i.e. there exists $a_r \in \mathbb{C}$ and $b_r \in \mathbb{R}$ such that for every $i$
	\[
	v_{r,i} = (u \circ \gamma_r)_i - u_i=  \langle a_r, z_i \rangle + b_r.
	\]
	Since $\rho_1,\rho_2$ commute, we have
	\[
	(u\circ \gamma_2 \circ \gamma_1) - u = (u\circ \gamma_1 \circ \gamma_2)- u
	\]
	which implies for all $i$
	\[
	\langle a_2, \alpha_1 z_i + \beta_1 \rangle + b_2 + \langle a_1,z_i \rangle + b_1 = 	\langle a_1, \alpha_2 z_i + \beta_2 \rangle + b_1 + \langle a_2,z_i \rangle + b_2
	\]
	and thus
	\begin{align*}
	\langle a_2, \beta_1 \rangle = \langle a_1, \beta_2 \rangle &\implies \Re ( a_1 \bar{\beta}_2) = \Re (a_2 \bar{\beta}_1 )\\
	a_2 (\bar{\alpha}_1 - 1) &= a_1  (\bar{\alpha}_2 - 1)
	\end{align*}
	We consider the conjugate harmonic function $v^*_r$ of $v_r$. They are of the form
	\begin{align*}
	v^*_{r,klm} = \langle a_r, -\mathbf{i} z^*_{klm} \rangle + \tilde{b}_r .
	\end{align*}
	for every face $\{klm\}$. Applying a similar argument to $v^*_j$, we can deduce
	\[
	\Im ( a_1 \bar{\beta}_2) = \Im (a_2 \bar{\beta}_1 )
	\]
	Thus $a_1 \bar{\beta}_2 = a_2 \bar{\beta}_1$.
\end{proof}

We apply the above lemma to two cases: (i) the affine holonomy has only one fixed point at infinity where we have $\alpha_1=\alpha_2=1$ and (ii) the affine holonomy shares an additional fixed point at the origin where we have $\beta_1=\beta_2=0$.

\begin{lemma} \label{lem:dimU}
	Suppose the developing map $z$ is a Delaunay triangulation with the holonomy as affine transformations. We consider harmonic functions $u:\tilde{V}\to \mathbb{R}$ in two cases:
	\begin{enumerate}[(i)]
		\item $\rho_r(z)=z+\beta_r$ for some $\beta_r \in \mathbb{C}$. We define
		\[
		\mathcal{U}:=\{ u \text{ harmonic}| \exists a_r \in \mathbb{C}, b_r \in \mathbb{R} \text{ s.t.} (u \circ \gamma_r)_i - u_i = \langle a_r, z_i \rangle + b_r \, \& \, a_1 \bar{\beta}_2 = a_2 \bar{\beta}_1 \}.
		\]
		\item $\rho_r(z)=\alpha_r  z$ for some $\alpha_j \in \mathbb{C}$. We define
		\[
		\mathcal{U}:=\{ u \text{ harmonic}| \exists a_r \in \mathbb{C}, b_r \in \mathbb{R} \text{ s.t.} (u \circ \gamma_r)_i - u_i = \langle a_r, z_i \rangle + b_r \, \& \, a_2 (\bar{\alpha}_1 - 1) = a_1  (\bar{\alpha}_2 - 1) \}.
		\]
	\end{enumerate}
	Then in both cases
	\[
	\dim \mathcal{U} \leq 5.
	\]
\end{lemma}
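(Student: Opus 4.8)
The plan is to introduce the linear evaluation map
\[
\Psi: \mathcal{U} \to \mathbb{C}^2 \oplus \mathbb{R}^2, \qquad u \mapsto (a_1, a_2, b_1, b_2),
\]
recording the coefficients of the linear functions $v_r := u\circ\gamma_r - u = \langle a_r, z\rangle + b_r$ furnished by Lemma \ref{lem:periodicu}, and then to bound $\dim\mathcal{U}$ by estimating $\dim\Ker\Psi$ and $\dim\Psi(\mathcal{U})$ separately via rank--nullity. First I would check that $\Psi$ is well defined and linear: since the developing map is an affine Delaunay realization (Corollary \ref{cor:affinedev}), the points $z_i$ are not collinear, so the assignment $i\mapsto(z_i,1)$ spans enough to determine $a_r\in\mathbb{C}$ and $b_r\in\mathbb{R}$ uniquely from the function $v_{r,i}=\langle a_r,z_i\rangle+b_r$; as $v_r$ depends linearly on $u$, so does $(a_r,b_r)$.

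Next I would compute the kernel. If $\Psi(u)=0$ then $a_1=a_2=0$ and $b_1=b_2=0$, so $v_r\equiv 0$, i.e. $u\circ\gamma_r=u$ for $r=1,2$. Hence $u$ is invariant under the whole deck group and descends to a harmonic function on the compact torus $M$. By the maximum principle established above (the cotangent weights are nonnegative and deck-invariant), such a function is constant. Therefore $\Ker\Psi$ consists precisely of the constants and $\dim\Ker\Psi=1$.

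Then I would bound the image. By Lemma \ref{lem:periodicu}, every $(a_1,a_2,b_1,b_2)\in\Psi(\mathcal{U})$ satisfies the compatibility relation defining $\mathcal{U}$: in case (i) this is $a_1\bar\beta_2=a_2\bar\beta_1$, and in case (ii) it is $a_2(\bar\alpha_1-1)=a_1(\bar\alpha_2-1)$. In either case this is a single complex-linear equation in $(a_1,a_2)\in\mathbb{C}^2$ whose coefficient vector, namely $(\bar\beta_2,-\bar\beta_1)$ respectively $(\bar\alpha_2-1,\,-(\bar\alpha_1-1))$, is nonzero: the periods $\beta_1,\beta_2$ are $\mathbb{R}$-linearly independent since they generate a lattice, and in case (ii) a genuine torus forces at least one $\alpha_r\neq1$. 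Thus the relation cuts the real dimension of the $(a_1,a_2)$-part from $4$ to $2$, and together with the two free real parameters $b_1,b_2$ this gives $\dim_{\mathbb{R}}\Psi(\mathcal{U})\le 4$.

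Combining, $\dim\mathcal{U}=\dim\Ker\Psi+\dim\Psi(\mathcal{U})\le 1+4=5$. The steps requiring the most care are verifying that the kernel is exactly the constants---which rests on the maximum principle and hence on the Delaunay hypothesis guaranteeing nonnegative cotangent weights---and confirming the nondegeneracy of the coefficient vector in each holonomy type, so that the compatibility relation genuinely cuts one complex dimension rather than being vacuous.
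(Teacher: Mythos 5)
Your proposal is correct and follows essentially the same route as the paper: the paper's proof is exactly the rank--nullity argument for the map $u \mapsto (a_1,a_2,b_1,b_2)$, with the kernel identified as the constants via the maximum principle and the image cut down by one real dimension... in fact by two, since the compatibility relation is a nontrivial complex equation, giving $1+4=5$. Your additional checks (well-definedness of $\Psi$ from non-collinearity of the $z_i$, and non-vacuousness of the relation in each holonomy type) are points the paper leaves implicit but do not change the argument.
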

\begin{proof}
	Notice that $ \mathcal{U}$ is a real vector space. If $u \in \mathcal{U}$ with $a_1=a_2=b_1=b_2=0$, then $u$ is a bounded harmonic function and hence $u$ is constant as a result of the maximum principle. Furthermore, since $a_1,a_2 \in \mathbb{C}$ and $b_1,b_2 \in \mathbb{R}$ satisfy a non-trivial condition $a_1 \bar{\beta}_2 = a_2 \bar{\beta}_1$ for case (i) and $ a_2 (\bar{\alpha}_1 - 1) = a_1  (\bar{\alpha}_2 - 1)$ for case (ii), we conclude that  $\dim \mathcal{U} \leq 5$.
\end{proof}

\begin{lemma}\label{lem:g1upper}
	For a Delaunay cross ratio system $\cratio$ on a triangulated torus, we have
	\[
	\dim_{\mathbb{R}} Q^{\mathbb{R}}(\cratio) \leq 2
	\]
\end{lemma}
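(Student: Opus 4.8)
The plan is to transport the bound $\dim\mathcal{U}\le 5$ from Lemma \ref{lem:dimU} to $Q^{\mathbb{R}}(\cratio)$ through the correspondence of Proposition \ref{prop:hqdharmonic}, paying the price of the three real dimensions occupied by linear functions. First I would invoke Corollary \ref{cor:affinedev} to fix an affine developing map $z:\hat V\to\mathbb{C}$ and normalize the holonomy to be of type I ($\rho_r(z)=z+\beta_r$) or type II ($\rho_r(z)=\alpha_r z$); these are exactly cases (i) and (ii) of Lemma \ref{lem:dimU}, where the compatibility relation of Lemma \ref{lem:periodicu} degenerates to $a_1\bar\beta_2=a_2\bar\beta_1$ and to $a_2(\bar\alpha_1-1)=a_1(\bar\alpha_2-1)$ respectively. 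In either normalization the cotangent weights are nonnegative and deck-invariant, so the maximum principle holds.

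Next, given $q\in Q^{\mathbb{R}}(\cratio)$ I would lift it to a $\gamma$-periodic quadratic differential $\hat q$ on the simply connected universal cover $\hat M$. Applying Proposition \ref{prop:hqdharmonic} on $\hat M$ produces a discrete harmonic function $u:\hat V\to\mathbb{R}$, unique up to adding a linear function, whose associated quadratic differential is $\hat q$. Because $q$ is periodic, Lemma \ref{lem:periodicu} applies and shows $u\circ\gamma_r-u=\langle a_r,z\rangle+b_r$ with the required relation between the $a_r,\beta_r$ (or $a_r,\alpha_r$); that is precisely the membership condition for $\mathcal{U}$, so $u\in\mathcal{U}$. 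This assignment $q\mapsto [u]\in\mathcal{U}/\{\text{linear functions}\}$ is well defined, and it is injective: by the one-to-one correspondence of Proposition \ref{prop:hqdharmonic}, two quadratic differentials with harmonic representatives differing by a linear function must coincide.

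To finish I would verify that the linear functions $u_i=\langle a,z_i\rangle+b$ form a genuine three-dimensional subspace of $\mathcal{U}$. They lie in $\mathcal{U}$ since a direct computation gives, in type I, $u\circ\gamma_r-u=\langle a,\beta_r\rangle$ (so $a_r=0$ and the relation holds trivially), and in type II, $u\circ\gamma_r-u=\langle a(\bar\alpha_r-1)^{\text{-side}},z\rangle$ with $a_r$ proportional to $a$, which again satisfies the compatibility relation; and they have dimension $3$ (two real from $a\in\mathbb{C}$, one from $b\in\mathbb{R}$) because the affine developing image of a torus triangulation is non-degenerate, so distinct $(a,b)$ give distinct functions on $V$. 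These linear functions are exactly the kernel of the correspondence, since $q\equiv 0$ iff $\dot z$ is an infinitesimal M\"obius deformation iff $u$ is linear. Combining the injection with Lemma \ref{lem:dimU} yields
\[
\dim_{\mathbb{R}} Q^{\mathbb{R}}(\cratio)\;\le\;\dim\big(\mathcal{U}/\{\text{linear functions}\}\big)\;=\;\dim\mathcal{U}-3\;\le\;5-3\;=\;2 .
\]
The main obstacle I anticipate is not the dimension arithmetic but the careful bookkeeping of how the correspondence of Proposition \ref{prop:hqdharmonic} descends from $\hat M$ to the torus: one must confirm that periodicity of $\hat q$ is equivalent to $u\circ\gamma_r-u$ being linear, and that the normalization into the two holonomy types matches the two cases of Lemma \ref{lem:dimU}; once these are pinned down, the injectivity into the five-dimensional space $\mathcal{U}$ modulo the three-dimensional linear functions gives the bound.
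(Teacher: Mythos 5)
Your argument is correct and is essentially the paper's own proof: the paper compresses it into two lines, quoting Lemma \ref{lem:dimU} and the $3$-dimensional subspace of linear functions to conclude $\dim_{\mathbb{R}} Q^{\mathbb{R}}(\cratio)\le \dim\mathcal{U}-3=2$, whereas you spell out the intermediate steps (lifting $q$ to the universal cover, invoking Proposition \ref{prop:hqdharmonic} and Lemma \ref{lem:periodicu} to land in $\mathcal{U}$, and checking injectivity modulo linear functions). The bookkeeping you flag at the end is exactly what the paper leaves implicit, and your verification of it is sound.
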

\begin{proof}
	The set  $\mathcal{U}$ contains a 3-dimensional subspace of linear functions which correspond to trivial discrete holomorphic quadratic differentials by Proposition \ref{lem:dimU}. It implies the space of holomorphic quadratic differentials is of dimension $\leq \dim \mathcal{U} -3=2$.
\end{proof}

\subsection{Rivin's variational approach} \label{sec:Rivin}

\begin{proposition}\cite[Theorem 7.2]{Rivin} \label{prop:rivin}
	Let $\Theta$ be a Delaunay angle structure on a triangulated torus and $A_1,A_2 \in \mathbb{R} $. Then there exists a unique affine structure on the torus with affine holonomy  $\rho_r(z)=\alpha_r z + \beta_r$ such that $\log |\alpha_r|= A_r$ and the induced cross ratio $\cratio$ satisfies $\Arg \cratio \equiv \Theta$. The map $z$ is unique up to a global affine transformation.
\end{proposition}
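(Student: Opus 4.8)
The plan is to recast the statement as the Euler--Lagrange problem for a strictly concave functional, following Rivin's variational method. Because the arguments $\Arg\cratio\equiv\Theta$ are prescribed, the analysis of the Delaunay condition (Figure \ref{fig:delaunay}) shows that the only remaining freedom in a cross ratio system is the modulus $|\cratio_{ij}|$ on each edge, which is the ratio of the circumradii of the two faces meeting along $\{ij\}$. I would therefore take as variables the logarithmic circumradii $\phi\colon\hat F\to\mathbb{R}$ of the faces of the universal cover. Demanding that the holonomy act by $\rho_r(z)=\alpha_r z+\beta_r$ with $|\alpha_r|=e^{A_r}$ forces the quasi-periodicity $\phi\circ\gamma_r-\phi\equiv A_r$; since the edge differences $\phi_t-\phi_s$ are then invariant under the deck group, this leaves only finitely many genuine unknowns on $M$. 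Condition (i) of the angle structure, $\sum_j\Theta_{ij}=2\pi$, guarantees that $\arg\prod_j\cratio_{ij}=\sum_j\Theta_{ij}\equiv 0$, so the argument part of \eqref{eq:crproduct} holds automatically and only the real equations --- the modulus of \eqref{eq:crproduct} and the full equation \eqref{eq:crsum} --- remain to be solved.

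I would then write down Rivin's concave functional $\mathcal{S}(\phi)$, assembled from Milnor--Lobachevsky (Clausen) terms in the neighbouring log-radii and the fixed angles $\Theta$, arranged so that $\nabla\mathcal{S}$ is exactly the vector of vertex closing defects corresponding to the remaining equations. As in Rivin and Bobenko--Springborn, the Hessian of $\mathcal{S}$ is negative semidefinite with kernel spanned only by the global shift $\phi\mapsto\phi+c$; hence $\mathcal{S}$ is strictly concave transverse to that shift. A critical point on the affine slice $\{\phi:\phi\circ\gamma_r-\phi\equiv A_r\}$ is then equivalent to a Delaunay cross ratio system realizing $\Theta$ with holonomy scaling $A_r$, and strict concavity yields uniqueness up to the one-parameter shift, which corresponds exactly to rescaling the developing map, i.e.\ to a global affine transformation.

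Producing the critical point is the coercivity step, where I expect the real difficulty. Existence follows once $\mathcal{S}$ is shown to tend to $-\infty$ along every ray on which some log-radius ratio escapes to $\pm\infty$, so that the supremum is attained at an interior point of the slice. Condition (ii) of the angle structure --- the strict inequality $\sum_i\Theta_{e_i}>2\pi$ over every contractible dual cycle that does not enclose a single vertex --- is tailored to exactly this: it is the obstruction preventing a subfamily of circumcircles from collapsing, and through the asymptotics of the Lobachevsky terms it translates into the required properness of $\mathcal{S}$ (condition (i) having already been used to make $\prod_j\cratio_{ij}=1$ consistent with the prescribed arguments). The main obstacle is therefore to transport Rivin's feasibility argument to the quasi-periodic setting on the torus: one must verify that condition (ii) controls degenerations within the slice $\phi\circ\gamma_r-\phi\equiv A_r$ rather than on a closed surface, and that no limiting configuration acquires the excluded type III holonomy (Corollary \ref{cor:affinedev}). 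Once properness is secured, the maximizer furnishes the desired affine structure, unique up to a global affine transformation.
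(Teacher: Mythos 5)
The paper does not prove this proposition at all: it is imported verbatim as \cite[Theorem 7.2]{Rivin}, so there is no in-paper argument to compare against, and your proposal has to stand on its own as a proof of Rivin's theorem. In spirit it does reproduce Rivin's strategy --- a strictly concave functional built from Lobachevsky-type terms whose critical points on the slice $\phi\circ\gamma_r-\phi\equiv A_r$ are the desired affine structures, with conditions (i) and (ii) of the angle structure supplying consistency and coercivity respectively. That is the correct architecture.

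However, as written there is a genuine gap: the existence half of the statement is never established. You do not write down $\mathcal{S}$, you assert the negative semidefiniteness of its Hessian and the one-dimensionality of its kernel by analogy with Rivin and Bobenko--Springborn, and you explicitly defer the properness argument (``the main obstacle is therefore to transport Rivin's feasibility argument to the quasi-periodic setting''). Since the entire content of condition (ii) --- the strict inequality $\sum\Theta_{e_i}>2\pi$ over dual cycles --- lives in exactly that step, the proof of existence is missing rather than sketched. There is also a concrete factual slip in the setup: $|\cratio_{ij}|$ is \emph{not} the ratio of the two circumradii. Writing each chord length via the inscribed angle theorem, $|z_k-z_i|=2R_{ijk}\lvert\sin\angle z_kz_jz_i\rvert$ and so on, the radii cancel in $\bigl\lvert\frac{(z_k-z_i)(z_l-z_j)}{(z_i-z_l)(z_j-z_k)}\bigr\rvert$ and the modulus depends only on the corner angles of the two triangles. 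The log-circumradii are still legitimate variables (two circles with given radii and intersection angle have determined center distance, so $\phi$ together with $\Theta$ determines the local configuration), but the motivation you give for choosing them is wrong, and the correct relation between $\phi$, $\Theta$ and the vertex-closing equations is precisely what must be computed to define $\mathcal{S}$ so that $\nabla\mathcal{S}$ is the defect vector. Finally, strict concavity transverse to $\phi\mapsto\phi+c$ only gives uniqueness of the radii up to global scaling; you still need the (easy but unstated) step that the radii plus $\Theta$ determine the developing map up to orientation-preserving similarity, which is where the rotation and translation parts of the ``global affine transformation'' ambiguity come from.
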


Two different affine structures might correspond to the same complex projective structure. The affine holonomy always have a common fixed point at $\infty$. As long as it shares another fixed point in $\mathbb{C}$, we can apply an inversion to interchange the two fixed points and obtain a new affine structure, while the underlying complex projective structure remains the same. In the notation of Proposition \ref{prop:rivin}, an affine holonomy shares two fixed points if and only if $(A_1,A_2)\neq0$. 

\begin{proof}[Proof of Theorem \ref{thm:holo}]
	Firstly, $P(\Theta)$ being a real analytic surface in the algebraic variety is asserted in Corollary \ref{cor:realsurface}. Proposition \ref{prop:rivin} implies that the affine tori with $\Arg \cratio \equiv \Theta$ are parameterized by two real numbers $(A_1,A_2)$. By uniqueness, two different affine tori $(A_1,A_2)$ and $(\tilde{A}_1,\tilde{A}_2)$ share the same complex projective structure if and only if $(\tilde{A}_1,\tilde{A}_2)=(-A_1,-A_2)$. Thus the space of affine tori with $\Arg \cratio \equiv \Theta$ is a 2-to-1 covering of $P(\Theta)$ branched at the Euclidean torus $(A_1,A_2)=(0,0)$. Since the space of affine tori with $\Arg \cratio \equiv \Theta$ is homeomorphic to $\mathbb{R}^2$ (parameterized by $(A_1,A_2)$), we deduce that  $P(\Theta)$ is homeomorphic to $\mathbb{R}^2$.
	
	Secondly, we claim that the holonomy representation is an embedding. Suppose two Delaunay cross ratio systems represent the same point in the character variety. Since Delaunay cross ratio systems induce complex projective structures and thus affine structures on the torus, their holonomy representation can be reduced to affine transformations. By interchanging the fixed points, the affine holonomy of the Delaunay cross ratio systems coincide. By the uniqueness in \ref{prop:rivin}, the two Delaunay cross ratio systems must be identical.
	
	Thirdly, if two Delaunay cross ratio systems induce the same complex projective structures, then they represent the same point in the character variety. Hence the cross ratio systems are the same again by the uniqueness.
\end{proof}

\section{Projection to the Teichm\"{u}ller space of the torus} \label{sec:projection}

Given a Delaunay cross ratio system on the torus, it induces a complex projective structure and thus a conformal structure. The underlying conformal structure can be read off easily from the affine developing map.

Recall that we can parameterize the affine structures with a fixed underlying conformal structure on the torus by a complex parameter $c$ as follows: Start with a Euclidean torus obtained by gluing the opposite sides of a parallelogram spanned by complex numbers $1$ and $\tau$ in the upper half plane. The parameter $\tau$ represent a marked conformal structure in the Teichmm\"{u}ller space. Let $d$ be a developing map of an affine structure with the same marked conformal structure. Its holonomy satisfies $d(z+1)= \alpha_1 d(z) + \beta_1$ and $d(z+\tau) = \alpha_2 d(z) + \beta_2$. Notice that $d$ is holomorphic and $d' \neq 0$. We have $d''/d'$ holomorphic and periodic on the torus. Thus $d''/d' = c$ for some constant $c\in \mathbb{C}$. In the case $c=0$, $d(z) =az + b$ and we get a Euclidean torus. In the case $c\neq 0$, we have $d(z) = a e^{c z} + b$ for some constants $a,b$, which can be normalized to $d(z)= e^{cz}$ by translation and scaling. Its developing map satisfies $d(z+1) = e^c d(z)$ and $d(z+\tau)= e^{c\tau} d(z)$. Thus the holonomy is generated by 
\begin{gather*}
z\circ \gamma_1 = \rho_1(z) = e^{c}z \\ z\circ \gamma_2= \rho_2(z) = e^{c \tau}z
\end{gather*}

Given an affine developing map $z:\tilde{V} \to \mathbb{C}$, we are going to find its underlying conformal structure. Notice that it is incorrect to conclude that $c$ and $c \tau$ are respectively $\log \frac{(z\circ \gamma_1)_i}{z_i}$ and $\log \frac{(z\circ \gamma_2)_i}{z_i}$ since the branch for the imaginary part of $\log$ is unclear. However, it could be fixed as follows: Associating  every face (i.e. dual vertex) with the circumcenter defines a map $z^{*}:F \to \mathbb{C}$ which has the same affine holonomy representation as $z$. Pick a simple path of the dual graph on the universal cover that projects to a loop homotopic to $\gamma_r$ on the torus. Under $z^*$, the path becomes a polygonal curve with vertices $(z^{*}_0,z^{*}_1,\dots,z^{*}_n=\rho_r(z^{*}_0),z^{*}_{n+1}=\rho_r(z^{*}_{1}))$ and we define
\begin{align}\label{eq:hhol}
	h_r:= \sum_{i=1}^{n} \log \frac{z^*_{i+1}-z^*_{i}}{z^*_{i}-z^*_{i-1}}
\end{align}
where the imaginary part of the logarithmic for each term take values in $(-\pi,\pi)$,  which is positive if the curve is turning right while negative otherwise. Notice that $|\Imaginary \log \frac{z^*_{i+1}-z^*_{i}}{z^*_{i}-z^*_{i-1}}|$ is the corner angle of the triangle opposite to $\angle z^*_{i-1}z^*_{i}z^*_{i+1}$. It can verified that the complex parameter $\tau$ and the affine parameter $c$ can be determined by 
\[
c= h_1 \quad \text{and} \quad c\, \tau= h_2.
\]

\begin{lemma}\label{lem:bounded}
	For any Delaunay cross ratio system, both $\Imaginary c$ and $\Imaginary c \tau$ are bounded by a constant depending only on the triangulation.
\end{lemma}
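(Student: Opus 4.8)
The plan is to read off both $c$ and $c\tau$ from the explicit formula \eqref{eq:hhol} along a fixed combinatorial dual path and to bound the accumulated turning term by term. First I would fix, once and for all, a simple edge-path $P_r$ in the dual graph of $M$ representing the homotopy class $\gamma_r$ for $r=1,2$. Because the combinatorics of the dual graph depends only on the triangulation and not on the particular cross ratio system, each such path has a fixed number $n_r$ of dual edges, and $n_1,n_2$ are constants determined by $M$ alone. Since $h_r$ is independent of the choice of representative, I may legitimately evaluate \eqref{eq:hhol} along $P_1$ and $P_2$ to compute $c=h_1$ and $c\tau=h_2$ for every Delaunay cross ratio system using these same fixed paths.

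The key step is the termwise estimate. For a Delaunay cross ratio system, Corollary \ref{cor:affinedev} guarantees an affine developing map under which every circumdisk is bounded and no two neighboring triangles fold; consequently every triangle is positively oriented and non-degenerate, so all of its corner angles lie in the open interval $(0,\pi)$. By the geometric interpretation recorded just before the statement, $|\Imaginary \log \tfrac{z^*_{i+1}-z^*_i}{z^*_i-z^*_{i-1}}|$ equals a corner angle of a triangle of the triangulation and is therefore strictly less than $\pi$. Summing along the fixed path and applying the triangle inequality then yields
\[
|\Imaginary c| = |\Imaginary h_1| \le \sum_{i=1}^{n_1} \left| \Imaginary \log \frac{z^*_{i+1}-z^*_i}{z^*_i-z^*_{i-1}} \right| < n_1\pi,
\]
and likewise $|\Imaginary c\tau| = |\Imaginary h_2| < n_2 \pi$. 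As $n_1,n_2$ depend only on $M$, this is precisely the asserted bound.

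The main obstacle — and the only place the Delaunay hypothesis is genuinely used — is the termwise bound by $\pi$. It is exactly the nondegeneracy supplied by Corollary \ref{cor:affinedev} (bounded circumdisks, no folds, corner angles in $(0,\pi)$) that keeps each turning term below a full half-turn; for a general, non-Delaunay cross ratio system the developing map may have folds or circumdisks through infinity, the identification of the turning term with a genuine corner angle breaks down, and the individual terms need no longer be controlled, so $\Imaginary c$ could run off to infinity. I would therefore present the reduction to the affine developing map as the crux, after which the bound follows directly from the construction of $h_r$ in \eqref{eq:hhol} together with the purely combinatorial boundedness of the path lengths $n_r$.
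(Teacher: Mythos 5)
Your proof is correct and follows essentially the same route as the paper: the paper's proof is exactly the termwise bound $|\Imaginary h_r| \leq n\pi$ from Equation \eqref{eq:hhol}, with $n$ determined by the combinatorics of the chosen dual path. Your additional discussion of why each turning term is a genuine corner angle in $(0,\pi)$ (via Corollary \ref{cor:affinedev}) is a welcome elaboration of what the paper leaves implicit in its choice of logarithm branch, but it is not a different argument.
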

\begin{proof}
	 Equation \eqref{eq:hhol} implies that
	 \[
	 |\Imaginary c| = |\sum_{i=1}^{n} \Imaginary \log \frac{z^*_{i+1}-z^*_{i}}{z^*_{i}-z^*_{i-1}}| \leq n \pi
	 \]
	 where $n$ depends on the triangulation and is independent of the cross ratio systems. The same argument applies to $\Imaginary c \tau$.
\end{proof}

\subsection{Properness} \label{sec:properness}

\begin{proposition}\label{prop:proper}
	Suppose $\Theta$ is a Delaunay angle structure on the torus. Then the projection from the space of Delaunay cross ratios $P(\Theta)$ to the Teichm\"{u}ller space of the torus is a proper map.
\end{proposition}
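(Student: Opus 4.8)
The plan is to reduce the statement to an elementary boundedness estimate by working in the real coordinates $(A_1,A_2)$ supplied by Rivin's Proposition \ref{prop:rivin}. Recall from the proof of Theorem \ref{thm:holo} that the affine tori with $\Arg\cratio\equiv\Theta$ are parameterized by $(A_1,A_2)\in\mathbb{R}^2$ and that the induced map $p\colon\mathbb{R}^2\to P(\Theta)$ is a $2$-to-$1$ cover, branched over the Euclidean torus $(0,0)$, obtained by quotienting out the involution $(A_1,A_2)\mapsto(-A_1,-A_2)$. Since $p$ is continuous, surjective, and proper (being the quotient by a finite group action), the projection $\pi\circ f$ is proper once the composite
\[
 G\colon \mathbb{R}^2 \longrightarrow \mathcal{T}(M), \qquad (A_1,A_2)\longmapsto \tau,
\]
is proper, where $\tau$ denotes the marked conformal structure of the affine torus with parameters $(A_1,A_2)$; indeed for compact $K$ one then has $(\pi\circ f)^{-1}(K)=p\big(G^{-1}(K)\big)$, a continuous image of a compact set. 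So I would prove that $G$ is proper.

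The key inputs are the relations recorded just before \eqref{eq:hhol} together with Lemma \ref{lem:bounded}. Writing the holonomy as $\rho_r(z)=\alpha_r z+\beta_r$, Rivin's normalization gives $\Re c=\log|\alpha_1|=A_1$ and $\Re(c\tau)=\log|\alpha_2|=A_2$, where $c=h_1$ and $c\tau=h_2$ are the quantities defined in \eqref{eq:hhol}. The decisive point is that while the real parts $A_1,A_2$ are free, the imaginary parts are trapped: Lemma \ref{lem:bounded} furnishes a constant $N$, depending only on the triangulation, with $|\Im c|\le N$ and $|\Im(c\tau)|\le N$.

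Now I would run the estimate. Let $K\subset\mathcal{T}(M)$ be compact; realizing $\mathcal{T}(M)$ as the upper half-plane, compactness gives $\varepsilon>0$ and $R>0$ with $\Im\tau\ge\varepsilon$ and $|\tau|\le R$ for all $\tau\in K$. Expanding the identity $c\tau=\tau\cdot c$ into real and imaginary parts yields
\[
\Im(c\tau)=A_1\,\Im\tau+(\Im c)\,\Re\tau, \qquad A_2=A_1\,\Re\tau-(\Im c)\,\Im\tau.
\]
Solving the first relation for $A_1$ gives
\[
A_1=\frac{\Im(c\tau)-(\Im c)\,\Re\tau}{\Im\tau}, \qquad\text{so}\qquad |A_1|\le\frac{N+NR}{\varepsilon},
\]
and the second then produces $|A_2|\le R\,|A_1|+NR$. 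Hence $G^{-1}(K)$ is contained in a fixed bounded rectangle in $\mathbb{R}^2$; being also closed (as $G$ is continuous and $K$ is closed), it is compact. Thus $G$ is proper, and by the reduction above so is $\pi\circ f$.

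I do not expect any serious obstacle beyond this estimate. The genuine difficulty — that $c$ and $c\tau$ cannot be read directly off $\log\big((z\circ\gamma_r)_i/z_i\big)$ because of the branch ambiguity in the imaginary part — has already been overcome by passing to circumcenters in \eqref{eq:hhol} and controlling the imaginary parts in Lemma \ref{lem:bounded}; the rest is linear algebra. The only remaining points are bookkeeping: that $G$ descends through $p$ (the involution $(A_1,A_2)\mapsto(-A_1,-A_2)$ amounts to interchanging the two fixed points of the holonomy by an inversion, which preserves the complex projective and hence the conformal structure), and that the branch point at the Euclidean torus is irrelevant since properness concerns only the behaviour as $(A_1,A_2)\to\infty$.
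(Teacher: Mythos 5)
Your argument is correct and follows essentially the same route as the paper: both rest on Rivin's parameterization of $P(\Theta)$ by $(A_1,A_2)$, the identity $h_2=\tau h_1$ with $A_r=\Re h_r$, and the triangulation-dependent bound on $\Im h_1,\Im h_2$ from Lemma \ref{lem:bounded} to trap $(A_1,A_2)$ in a compact set. The only difference is packaging — you phrase properness via preimages of compact sets and the branched cover $p$, whereas the paper extracts convergent subsequences — and both treatments rely on the same (asserted) continuity of the correspondence $(A_1,A_2)\mapsto\cratio\mapsto\tau$.
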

\begin{proof}
	Suppose a sequence of Delaunay cross ratio systems $\{\cratio^{(k)}\}_{k\in \mathbb{N}}$ in $P(\Theta)$ is given such that their underlying marked conformal structures $\{\tau^{(k)}\}_{k\in \mathbb{N}}$ converge to some conformal structure $\tau^{(\infty)}$ in the upper half plane. Proposition \ref{prop:rivin} implies that the sequence of Delaunay cross ratio systems can be represented, up to a sign, by the sequence $\{(A_1^{(k)},A_2^{(k)})\}_{k\in \mathbb{N}}$ in $\mathbb{R}^2$ such that under an affine developing map, these numbers represent the scaling part of the holonomy. Namely, for $r=1,2$ and $k \in \mathbb{N}$
	\[
	A_r^{(k)} = \Re h^{(k)}_r
	\] 
	where $h$ is defined in \eqref{eq:hhol}.
	
	We claim that $\{(A_1^{(k)},A_2^{(k)})\}_{k\in \mathbb{N}}$ has a convergent subsequence. Notice that
	\begin{align}\label{eq:sequh}
		h^{(k)}_2= c^{(k)} \tau^{(k)} = \tau^{(k)} h^{(k)}_1
	\end{align}
	Taking the imaginary of both sides yields for $k\in \mathbb{N}$
	\[
	\Im h^{(k)}_2 =  \Imaginary \tau^{(k)}  \Real h^{(k)}_1 +\Real \tau^{(k)}  \Imaginary h^{(k)}_1 =A^{(k)}_1   \Imaginary \tau^{(k)}  +\Real \tau^{(k)}  \Imaginary h^{(k)}_1
	\]
	Recall from Lemma \ref{lem:bounded} that both $\{\Imaginary h^{(k)}_1\}_{k\in \mathbb{N}}$ and $\{\Imaginary h^{(k)}_2\}_{k\in \mathbb{N}}$ are bounded. Since $\{\tau^{(k)}\}_{k\in \mathbb{N}}$ converges to $\tau^{(\infty)}$ in the upper half plane, we can assume $\{\tau^{(k)}\}_{k\in \mathbb{N}}$ is bounded by considering a subsequence. Thus $\{A^{(k)}_1\}_{k\in \mathbb{N}}$ is bounded and hence has a converging subsequence with the limit denoted as $A^{(\infty)}_1$. Similarly, taking the real part of \eqref{eq:sequh} yields that there is a subsequence of $\{A^{(k)}_2\}_{k\in \mathbb{N}}$ converging to some number $A^{(\infty)}_2$ in $\mathbb{R}$. By Rivin's variational argument (Proposition \ref{prop:rivin}), there is a Delaunay cross ratio system $\cratio^{(\infty)}$ corresponding to $(A^{(\infty)}_1,A^{(\infty)}_2)$.
	
	Furthermore, one can show that both $\cratio$ and $(h_1,h_2) \in \mathbb{C}^2$ depends continuously on the parameters $(A_1,A_2)$ for the affine structures. Hence we deduce that $\{\cratio^{(k)}\}_{k\in \mathbb{N}}$ has a subsequence converging to $\cratio^{(\infty)}$ and its underlying conformal structure is $\tau^{(\infty)}$ by taking the limit of Equation \eqref{eq:sequh} as $k\to \infty$.
\end{proof}

\subsection{Immersion around non-Euclidean tori} \label{sec:immersion}

 Under an infinitesimal change of the complex projective structure with the conformal structure fixed, the change in the affine holonomy is of the form \begin{align*}
(\dot{\rho}_1(z),\dot{\rho}_2(z)) = (\dot{c} e^{c}z, \dot{c}\tau e^{c\tau}z).
\end{align*}

\begin{proposition}\label{prop:injective}
	Suppose a Delaunay cross ratio system induces a non-Euclidean affine torus ($c\neq0$). Then a discrete holomorphic quadratic differential $q$ yields an infinitesimal change in the complex projective structure while preserving the conformal structure if and only if $q \equiv 0$.
\end{proposition}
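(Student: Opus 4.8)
The plan is to pass to discrete harmonic functions via Proposition \ref{prop:hqdharmonic}, read off the induced change of the holonomy multipliers, and thereby reduce the statement to the nonvanishing of a single determinant; the maximum principle for the cotangent Laplacian will do the rest. Fix the affine developing map $z$ with holonomy $\rho_r(w)=\alpha_r w$, so that $\tau=h_2/h_1$ with $h_r=\log\alpha_r$ as in \eqref{eq:hhol}. Given $q\in Q^{\mathbb{R}}(\cratio)$, Proposition \ref{prop:hqdharmonic} yields an angle-preserving deformation $\dot z$ of $z$ and a discrete harmonic function $u$. By Corollary \ref{cor:affinedev} the deformed structure is again affine, so I normalize the $PSL(2,\mathbb{C})$-freedom of $\dot z$ so that the holonomy change $\dot\rho_r$ is affine and fixes $0$ and $\infty$; then the linear parts $a_r$ in Lemma \ref{lem:periodicu} vanish and both $u$ and its conjugate $u^*$ have constant quasi-periods $u\circ\gamma_r-u=b_r$ and $u^*\circ\gamma_r-u^*=\tilde b_r$, with $b_r,\tilde b_r\in\mathbb{R}$.

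Since \eqref{eq:hhol} telescopes to $h_r=\log\alpha_r$, one has $\dot h_r=\dot\alpha_r/\alpha_r$. Comparing $\frac{\dot z_j-\dot z_i}{z_j-z_i}$ on an edge with its value on the $\gamma_r$-translate, the increment equals $\dot\alpha_r/\alpha_r$; its imaginary part is read from $\Im\frac{\dot z_j-\dot z_i}{z_j-z_i}=\tfrac12(u_i+u_j)$ and equals $b_r$, while \eqref{eq:conjharmonic} supplies the real part $-\tilde b_r$. Hence I obtain the clean identity $\dot h_r=-\tilde b_r+\mathbf{i}\,b_r$. Since $\tau=h_2/h_1$, the conformal structure is preserved precisely when $\dot\tau=\tfrac1c(\dot h_2-\tau\dot h_1)=0$.

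The maximum principle shows that a harmonic function with $b_1=b_2=0$ is (quasi-)periodic, hence bounded, hence constant, so $q\mapsto(b_1,b_2)$ is a linear isomorphism $Q^{\mathbb{R}}(\cratio)\xrightarrow{\sim}\mathbb{R}^2$; moreover conjugation $q\mapsto q^*$ makes $Q^{\mathbb{R}}(\cratio)$ one-dimensional over $\mathbb{C}$, so the periods $\omega_r:=b_r+\mathbf{i}\tilde b_r$ satisfy $\omega_2=\sigma\,\omega_1$ for a fixed modulus $\sigma$. As $\dot h_r=\mathbf{i}\,\omega_r$, this rewrites $\dot\tau=\tfrac{\mathbf{i}}{c}(\sigma-\tau)\,\omega_1$. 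A short computation with the resulting real $2\times2$ system then shows that $\dot\tau=0$ forces $b_1=\tilde b_1=0$, hence $\omega_1=\omega_2=0$ and $q=0$, precisely when $\sigma\neq\tau$.

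The main obstacle is therefore to prove $\sigma\neq\tau$ for every non-Euclidean structure ($c\neq0$). I would argue by rigidity. If $\sigma=\tau$ there is a nonzero $q$ with $\dot h_r=\lambda h_r$, i.e. $\dot\alpha_r=\lambda\,\alpha_r\log\alpha_r$; setting $G_i:=\dot z_i-\lambda\,z_i\log z_i$ one checks directly that $G\circ\gamma_r=\alpha_r G$, so $\chi:=G/z$ is single-valued, hence bounded, on the torus. The point is that $\chi$ being constant would identify $\dot z$, modulo an infinitesimal M\"{o}bius map, with the exponential conformal field $\lambda\,z\log z$, and such a field changes the cross ratios by a quantity whose argument cannot vanish unless $c=0$ -- contradicting the reality of $q$. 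Thus the crux is to upgrade ``$\chi$ bounded'' to ``$\chi$ constant'', for which I would exploit the positivity of the cotangent weights (the Delaunay hypothesis) through the maximum principle, exactly as in the statement that the developing map is a discrete harmonic embedding if and only if the torus is Euclidean. This is the step where $c\neq0$ is indispensable: at $c=0$ one has $\sigma=\tau$, which is precisely the branch point appearing in Theorem \ref{thm:covering}.
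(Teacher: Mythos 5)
Your setup is sound and agrees with the paper's bookkeeping: passing to the harmonic function $u$ and its conjugate $u^*$ via Proposition \ref{prop:hqdharmonic}, reading off constant quasi-periods, and the identity $\dot h_r=-\tilde b_r+\mathbf{i}\,b_r$ all match the computation in the paper, as does the final maximum-principle step (vanishing periods $\Rightarrow$ $u$ constant $\Rightarrow$ $q\equiv0$). But the core of the argument is missing. First, the assertion that ``conjugation $q\mapsto q^*$ makes $Q^{\mathbb{R}}(\cratio)$ one-dimensional over $\mathbb{C}$,'' so that $\omega_2=\sigma\,\omega_1$ for a single complex modulus $\sigma$, is unjustified: $Q^{\mathbb{R}}(\cratio)$ carries no given complex structure, $u^*$ lives on faces rather than vertices so ``rotating'' $(u,u^*)$ does not obviously produce another element of $Q^{\mathbb{R}}(\cratio)$, and the image of the real-linear period map $q\mapsto(\omega_1,\omega_2)\in\mathbb{C}^2$ is a priori just a real $2$-plane, not a complex line. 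Without this, what you must show is that the real-linear map $q\mapsto\dot\tau\in\mathbb{C}\cong\mathbb{R}^2$ has nonzero determinant --- which is exactly the statement of the proposition, not a reduction of it.

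Second, even granting the complex-line structure, you concede that the decisive inequality $\sigma\neq\tau$ is unproved, and your sketch for it (upgrading ``$\chi=G/z$ bounded'' to ``$\chi$ constant'') has no mechanism behind it: $\chi$ is not harmonic for any Laplacian established in the paper, and boundedness of a function on a compact quotient is vacuous. Since $\sigma=\tau$ is equivalent to the existence of a nonzero conformal-structure-preserving $q$, the argument is circular at precisely the point where new input is required. The paper supplies that input analytically: by the discrete Riemann bilinear identity of Bobenko--Skopenkov, the Dirichlet energy of $u$ is forced to equal $|\dot c|^2\Im\tau$, which is also the energy of the smooth harmonic competitor $\Re\bigl(-\mathbf{i}\tfrac{\dot c}{c}\log z\bigr)$ with the same periods; since that competitor is the \emph{strict} minimizer among piecewise smooth functions and is not piecewise linear when $\dot c\neq0$, one gets $\mathcal{E}(u^{\dagger})<\mathcal{E}(\tilde u)$ with both sides equal to $|\dot c|^2\Im\tau$, a contradiction forcing $\dot c=0$. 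Some comparison of this kind (or an equivalent positivity statement) is what your proof still needs.
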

\begin{proof}

We prove by contradiction: Suppose we have a Delaunay cross ratio and we have a developing map whose holonomy has fixed points at the origin and infinity. We write its holonomy as $\rho_1(z) = e^{c}z$ and $\rho_2(z) = e^{c \tau}z$. Let $q$ be a discrete holomorphic quadratic differential as in the assumption and $\dot{z}$ be an infinitesimal deformation of the developing map given by Proposition \ref{prop:hqdharmonic}. Since $z$ is equivariant with respect to the fundamental group, we have
\begin{align*}
(\dot{z}\circ \gamma_1)_i &= d\rho_1 (\dot{z}_i) + \dot{\rho}_1 (z_i)= e^{c}\dot{z}_i+ \dot{c}e^{c} z_i \\
(\dot{z}\circ \gamma_2)_i &= d\rho_2 (\dot{z}_i) + \dot{\rho}_2 (z_i)= e^{c\tau}\dot{z}_i+ \dot{c}\tau e^{c\tau} z_i
\end{align*}

By Proposition \ref{prop:hqdharmonic}, it determines a discrete harmonic function on the universal cover of the torus with constant periods: 
 for every triangle $\{ijk\}$ we have
\begin{align*}
(u\circ \gamma_1)_i &= \Imaginary\left( \frac{(\dot{z}\circ \gamma_1)_j - (\dot{z}\circ \gamma_1)_i}{(z\circ \gamma_1)_j - (z\circ \gamma_1)_i}  + \frac{(\dot{z}\circ \gamma_1)_k - (\dot{z}\circ \gamma_1)_i}{(z\circ \gamma_1)_k - (z\circ \gamma_1)_i}  - \frac{(\dot{z}\circ \gamma_1)_j - (\dot{z}\circ \gamma_1)_k}{(z\circ \gamma_1)_j - (z\circ \gamma_1)_k} \right) \\
&= u_i + \Imaginary( \dot{c})
\end{align*}
and similarly
\begin{align*}
(u\circ \gamma_2)_i =u_i + \Imaginary( \dot{c} \tau).
\end{align*}
It yields that its conjugate harmonic function $u^*:\hat{F} \to \mathbb{R}$ must have constant periods as well. We write for $r=1,2$
\begin{align*}
(u^* \circ \gamma_r)_{ijk} = u^*_{ijk} + \delta_r
\end{align*}
where $\delta_r \in \mathbb{R}$. Using \eqref{eq:conjharmonic}, we deduce that
\[
\delta_1 = -\Real( \dot{c}), \quad \delta_2 = -\Real( \dot{c} \tau)
\]
Though $u$ is defined on the universal cover, its difference $u_j- u_i$ is well defined on the torus. We can compute its Dirichlet energy over the torus:
\[
\mathcal{E}_{T}(u):= \frac{1}{2} \sum_{ij \in E} (\cot \angle jkl + \cot \angle jil) (u_j - u_i)^2 = \frac{1}{2} \sum_{ij \in E} (u^*_{ijk} - u^*_{jil}) (u_j - u_i)
\]
where the sum is over the edges of the torus. 

Analogous to Riemann's bilinear identity in the classical theory, Bobenko and Skopenkov \cite{Bobenko2016} showed that the Dirichlet energy of such a discrete harmonic function is determined by its periods: if $u:\hat{V} \to \mathbb{R}$ is a discrete harmonic function on the universal cover and $u^*:\hat{F} \to \mathbb{R}$ is its conjugate harmonic function with constant periods $A_1,A_2 \in \mathbb{C}$ where $\Re A_r = u \circ \gamma_r - u $ and $\Im A_r = u^* \circ \gamma_r - u^* $ for $r=1,2$. Then the Dirichlet energy is given by the periods
	\[
	\mathcal{E}_T(u) = -\Imaginary (A_1 \bar{A}_2).
	\]

Substituting $A_1= -\mathbf{i}\dot{c}$ and  $A_2= -\mathbf{i}\dot{c}\tau$ where $\mathbf{i}=\sqrt{-1}$ for our case, our harmonic function $u$ has energy
\[
\mathcal{E}_T(u) =  -\Imaginary (|\dot{c}|^2 \bar{\tau}) = |\dot{c}|^2 \Imaginary ( \tau) \geq 0.
\]

We claim that this energy is not achievable by a discrete harmonic function unless $\dot{c}=0$. Notice that $u$ is defined on the vertices of a triangulation. We can extend it piecewisely over faces to obtain a piecewise linear function $\tilde{u}: \tilde{M} \to \mathbb{R}$, which has the same periods as $u$.
Its Dirichlet energy in the classical theory is defined by:
\[
\mathcal{E}(\tilde{u}) := \iint_M |\grad \tilde{u}|^2 dA
\]
Using the property of the cotangent Laplacian \cite{Pinkall1993}, we have \[\mathcal{E}(\tilde{u}) = \mathcal{E}_T(u) =|\dot{c}|^2 \Imaginary ( \tau) \]
We compare this energy with a smooth harmonic function: Consider the function $u^{\dagger}:=\Re (-\mathbf{i} \frac{\dot{c}}{c} \log z)$ defined on the universal cover of $\mathbb{C}-{0}$. Pulled back by the developing map, it defines a harmonic function on the universal cover of the torus. Indeed it is a smooth a harmonic function with constant periods for the torus :
\begin{align*}
u^{\dagger} \circ \gamma_1(z) - u^{\dagger}(z) &= \Re (-\mathbf{i}\frac{\dot{c}}{c} \log (e^c z)) - \Re (-\mathbf{i}\frac{\dot{c}}{c} \log z) = \Re (-\mathbf{i}\dot{c}) = \Imaginary (\dot{c}) \\
u^{\dagger} \circ \gamma_2(z) - u^{\dagger}(z) &= \Re (-\mathbf{i}\frac{\dot{c}}{c} \log (e^{c\tau} z)) - \Re (-\mathbf{i}\frac{\dot{c}}{c} \log z) = \Re (-\mathbf{i}\dot{c}\tau)=\Imaginary (\dot{c} \tau) 
\end{align*}  
whose conjugate harmonic function $\Imaginary (-\mathbf{i}\frac{\dot{c}}{c} \log z)$ has periods $-\Real (\dot{c})$ and $-\Real (\dot{c}\tau) $ similarly. Using the Riemann bilinear identity from the smooth theory, the Dirichlet energy of $u^{\dagger}$ on the torus equals to $\mathcal{E}(u^{\dagger}) =  |\dot{c}|^2 \Imaginary ( \tau)$. However, notice that $u^{\dagger}$ is the unique minimizer (up to a constant) of the classical Dirichlet energy among piecewise smooth functions with periods $\Imaginary(\dot{c})$ and $\Imaginary (\dot{c}\tau)$, we have if $\dot{c} \neq 0$
\[
  |\dot{c}|^2 \Imaginary ( \tau) = \mathcal{E}(u^{\dagger}) < \mathcal{E}(\tilde{u}) =   |\dot{c}|^2 \Imaginary ( \tau)
\]
which is a contradiction. It implies $\dot{c}=0$ and the discrete harmonic function $u$ is constant by the maximum principle. Using Proposition \ref{prop:hqdharmonic}, we have $q\equiv0$.
\end{proof}

\begin{proof}[Proof of Theorem \ref{thm:covering}]
	For a Delaunay angle structure $\Theta$ on the torus, Proposition \ref{prop:injective} shows that $d(\pi\circ f)$ is injective over $P(\Theta)  \backslash \{\cratio_0\} $ and hence $ \pi\circ f:P(\Theta) \backslash \{\cratio_0\} \to  \mathcal{T}(M)  \backslash \{\tau_0\}$ is a local homeomorphism. On the other hand, Proposition \ref{prop:proper} implies the map is proper. Thus $\pi\circ f:P(\Theta) \backslash \{\cratio_0\} \to  \mathcal{T}(M) \backslash \{\tau_0\}$ is a finite-sheet covering.
\end{proof}

\begin{remark}
	For the Euclidean torus, the corresponding periods of discrete harmonic functions are not constant but linear functions. Hence their Dirichlet energy is not well defined on the torus and the argument in Proposition \ref{prop:injective} could not apply. 
\end{remark}

\section{Examples} \label{sec:examples}

\subsection{Non-Delaunay cross ratio system: Jessen's orthogonal icosahedron}  \label{sec:jessen}

	 Jessen's icosahedron \cite{Jessen1967} is combinatorially a regular icosahedron with some edges flipped (see Fig. \ref{fig:Jessen} left). Its vertices lie on a sphere and all dihedral angles are either $\pi/2$ or $3\pi/2$. It is a non-convex triangulated sphere that is infinitesimally flexible, i.e. it admits an infinitesimal deformation of vertices such that edge lengths are preserved.
	
	 It is known in \cite{Lam2015} that the non-trivial infinitesimal isometric deformation induces a non-vanishing holomorphic quadratic differential on its stereographic image (see Fig. \ref{fig:Jessen} right). Notice that the orthogonal icosahedron consists two kind of edges: Every vertex is connected to four short edges and one long edge. We define $q_{ij}=1$ on the short edges and $q_{ij}=-4$ on the long edges. One immediately have $\sum_j q_{ij}=0$ for every vertex $i$. Denoting $z$ the stereographic image of the vertices, one can check $\sum_j q/(z_j-z_i) =0$ around every vertex and $q$ is a holomorphic quadratic differential. We thus obtain a non-Delaunay triangulation of a sphere that carries a non-trivial holomorphic quadratic differential.
	
	\begin{figure}[!h]
		\begin{minipage}{0.45\textwidth}
			\centering
			\tdplotsetmaincoords{73.5}{22}
			\begin{tikzpicture}[scale=1,tdplot_main_coords]
			\coordinate (A) at (1,-2,0); 1
			\coordinate (E) at (-1,-2,0);2
			\coordinate (D) at (2,0,1);3
			\coordinate (H) at (2,0,-1);4
			\coordinate (B) at (0,-1,2);5
			\coordinate (I) at (0,1,2);6
			\coordinate (F) at (-2,0,1);7
			\coordinate (G) at (-2,0,-1);8
			\coordinate (bA) at (1,2,0);9
			\coordinate (bE) at (-1,2,0);10
			\coordinate (C) at (0,-1,-2);11
			\coordinate (bI) at (0,1,-2);12
			
			\draw (A)--(B)--(C)--(A);
			\draw (B)--(E)--(C);
			\draw (A)--(D)--(B);
			\draw (B)--(F)--(E);
			\draw (E)--(G)--(C);
			\draw (A)--(H)--(C);
			\draw (D)--(I)--(F);
			\draw[dashed] (F)--(D);
			\draw (A)--(bA)--(D);
			\draw (H)--(bA);
			\draw[dashed] (E)--(bE);
			\draw[dashed] (I)--(bA);
			\draw[dashed] (I)--(bE);
			\draw[dashed] (I)--(bI);
			\draw[dashed] (bI)--(H);
			\draw[dashed] (bI)--(G);
			\draw[dashed] (bI)--(bA);
			\draw[dashed] (bI)--(bE);
			\draw[dashed] (H)--(G);
			\draw[dashed] (G)--(bE);
			\draw[dashed] (F)--(bE);
			\end{tikzpicture}
		\end{minipage}
		\begin{minipage}{0.45\textwidth}
			\centering
			\includegraphics[width=0.7\textwidth]{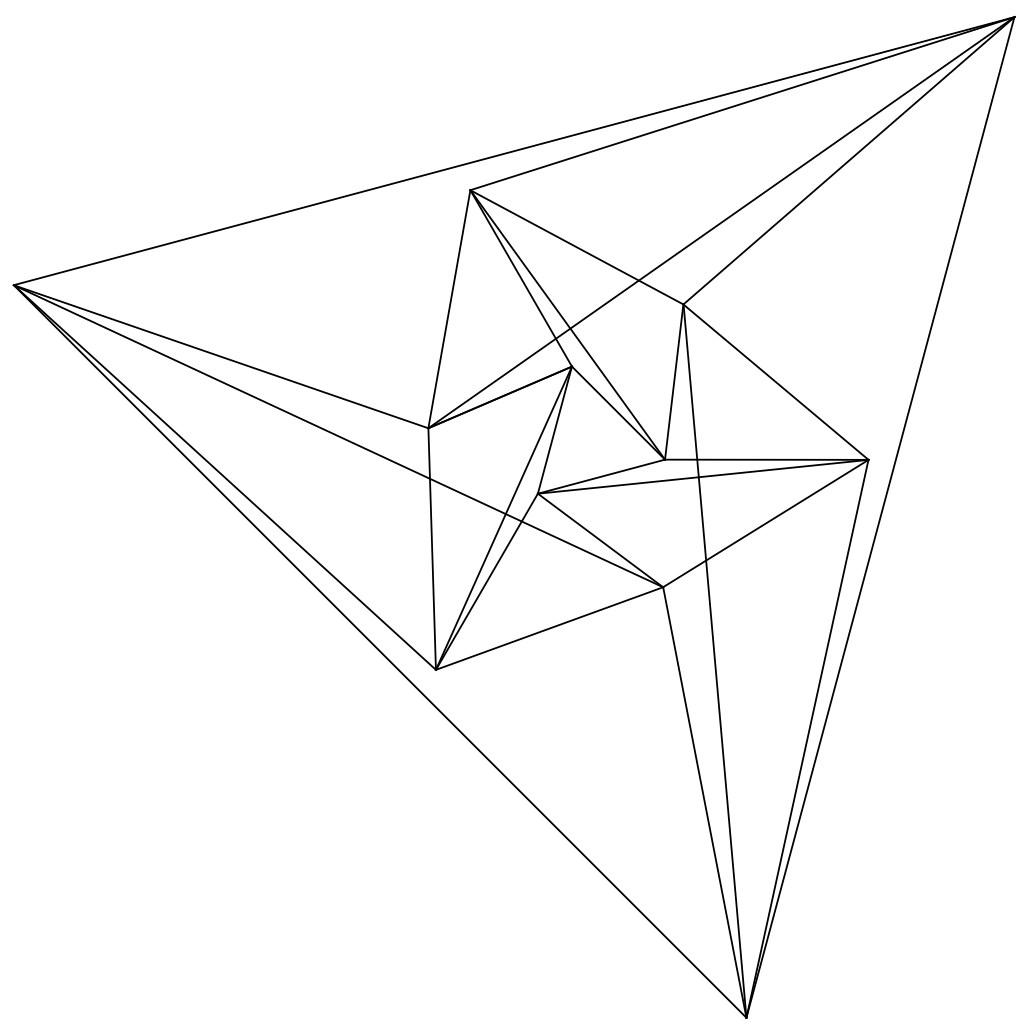}
		\end{minipage}
		\caption{Jessen's orthogonal icosahedron (left) and its stereographic projection (right).}
		\label{fig:Jessen}
	\end{figure}

	Jessen's orthogonal icosahedron provides a construction to obtain a non-Delaunay cross ratio system on a given surface with the dimension of discrete holomorphic quadratic differentials as large as one desires. To see this, suppose a cross ration system $\cratio$ is given on a triangulation $T$ of a surface and consider its developing map. Pick a face $\phi$ of $T$. Then we attach a M\"{o}bius image of Jessen's icosahedron to all the lifts of $\phi$ on the universal cover under the developing map. In this way, we obtain a new triangulation $\tilde{T}$ of the given surface together with a new cross ratio system $\tilde{cr}$. It can be verified that  $\dim Q^{\mathbb{R}}(\tilde{\cratio}) = \dim Q^{\mathbb{R}}(\cratio) +1$.

\subsection{One-vertex triangulated torus} \label{sec:one-vertex}
We consider the one-vertex triangulation of the torus, which has three edges and two faces. On the universal cover, each vertex is connected to six edges and the equations in Definition \ref{def:crsys} becomes:
\begin{align*}
1&= (\cratio_1 \cratio_2 \cratio_3)^2 \\
0&= (\cratio_1 + \cratio_1 \cratio_2+ \cratio_1 \cratio_2 \cratio_3) ( 1 +  \cratio_1 \cratio_2 \cratio_3)
\end{align*}
There are two cases for the solutions.

\textbf{Case (a): $\cratio_1 \cratio_2 \cratio_3=-1$}. One can verify that the holonomy $\rho_1$, $\rho_2$ share the same fixed points and can be normalized as complex affine transformations. 

On the other hand, $q_1,q_2,q_3$ on the edges yields a holomorphic quadratic differential if and only if $q_1+q_2+q_3=0$. It implies $	\dim_{\mathbb{C}} Q^{\mathbb{C}}=2$ and  $	\dim_{\mathbb{R}} Q^{\mathbb{R}}=2$. In particular, the space of the solution with prescribed $| \cratio|$ or $\Arg \cratio$ is a manifold of real dimension $2$.

\textbf{Case (b): $\cratio_1 \cratio_2 \cratio_3=1$ and $\cratio_1 + \cratio_1 \cratio_2+ \cratio_1 \cratio_2 \cratio_3=0$ }. One can show that $\cratio_1, \cratio_2, \cratio_3$ are of the form $b, -(b+1)/b, -1/(1+b)$ for some complex number $b$ and it is non-Delaunay for any choice of $b$. To see this, pick a vertex $z_0$ and denote its neighboring vertices in $\hat{M}$ as $z_1,z_2,z_3,z_{\tilde{1}},z_{\tilde{2}},z_{\tilde{3}}$ in counterclockwise orientation. One can show that $z_i=z_{\tilde{i}}$ for $i=1,2,3$ and furthermore the holonomy exchanges the fixed points of each other and is of type (III) in Section \ref{subsec:holonomy}, which does not appear for the torus in the smooth theory. In fact, the gluing construction in Proposition \ref{prop:cp1} yields a surface with boundary in this case.

On the other hand, $\{q_1,q_2,q_3\}$ is in the kernel of $D(\Phi\circ \exp)$ (see Section \ref{sec:hqd}) if it is in the form of
\begin{align*}
q_1 &= (b-1) q_2 \\
q_3 &= -b \, q_2
\end{align*}
up to scaling. It implies 	$\dim_{\mathbb{C}} Q^{\mathbb{C}} =1$. For quadratic differentials, since $q$ is required be to real-valued, we have
\begin{align*}
\dim_{\mathbb{R}} Q^{\mathbb{R}} &= 1 \quad \text{if } b\in \mathbb{R} \\
\dim_{\mathbb{R}} Q^{\mathbb{R}} &= 0 \quad \text{otherwise}
\end{align*}

\section{Discussion and open questions} \label{sec:discussion}

In this section, we discuss some open questions that extend the conjecture of Kojima, Mizushima and Tan \cite{Kojima2003}. 

\subsection{Surfaces with genus $g$}

Theorem \ref{thm:holo} and \ref{thm:covering} should have counterparts for surfaces other than the torus. The first question is whether $P(\Theta)$ is a manifold for a given Delaunay angle structures. It is equivalently to asking if $Q^{\mathbb{R}}(\cratio)$ has the right dimension as the Teichm\"{u}ller space. For the torus $(g=1)$, it is answered in Theorem \ref{prop:kernel}. For the sphere $(g=0)$, it is trivial.

\begin{proposition}
	Suppose $\Theta$ is a Delaunay angle structure on a triangulated sphere ($g=0$). Then along $P(\Theta)$, we have 
	\[
		\dim_{\mathbb{R}} Q^{\mathbb{R}}(\cratio) =0.
	\]
	and $P(\Theta)=\{\cratio_0\}$ consists of only one element.
\end{proposition}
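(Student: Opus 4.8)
The content of the statement is entirely the upper bound $\dim_{\mathbb R}Q^{\mathbb R}(\cratio)\le 0$, since for $g=0$ the inequality $6g-6=-6$ of Proposition \ref{prop:lowerbound} carries no information; it is the discrete counterpart of the classical fact that $S^2$ carries no nonzero holomorphic quadratic differential. My plan is to translate the problem into discrete potential theory exactly as in Section \ref{sec:upperbound}. Because $S^2$ is simply connected, the developing map $z:V\to\mathbb C\cup\{\infty\}$ is globally defined with trivial holonomy, and Proposition \ref{prop:hqdharmonic} gives a bijection between $Q^{\mathbb R}(\cratio)$ and the space of discrete harmonic functions $u:V\to\mathbb R$ modulo linear functions. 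Since the linear functions $u_i=\langle a,z_i\rangle+b$ form a $3$-dimensional space lying in the kernel of the cotangent Laplacian $L$, one has $\dim_{\mathbb R}Q^{\mathbb R}(\cratio)=\dim\Ker L-3$, so everything reduces to proving $\dim\Ker L=3$, i.e.\ that every discrete harmonic function is linear.

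I would attack $\dim\Ker L\le 3$ through a maximum principle, as for the torus. The Delaunay condition $\Arg\cratio\in[0,\pi)$ is the local empty-circle condition, so whenever two adjacent faces are positively oriented their opposite angles satisfy $\angle jki+\angle ilj\le\pi$ and the weight $c_{ij}=\cot\angle jki+\cot\angle ilj\ge0$; on a closed triangulation every vertex is interior, and the standard graph-Laplacian argument would then force a harmonic function attaining an extremum to be constant, making $\Ker L$ consist of constants alone. The discrepancy with the expected value $\dim\Ker L=3$ already exposes the obstacle, which I expect to be the main technical point: unlike the torus there is no affine reduction (contrast Corollary \ref{cor:affinedev}), the image of $z$ genuinely meets $\infty$, exactly one circumdisk is unbounded, and its face is negatively oriented, so $c_{ij}$ can be negative on its three edges. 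The naive maximum principle therefore breaks — and it \emph{must} break, because the nonconstant linear functions are already harmonic, so $\Ker L$ is strictly larger than the constants.

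The honest route is thus to control these finitely many exceptional edges rather than to apply the maximum principle verbatim. I would realize $\cratio$ as a genuine circle pattern on the round sphere via discrete uniformization, where the empty-circle condition holds globally with honest spherical caps and no face is distinguished by $\infty$; passing the cotangent energy $\tfrac12\sum c_{ij}(u_j-u_i)^2$ through this conformally adapted model should restore non-negativity of all weights and leave the kernel spanned exactly by the $3$-dimensional family of linear data, giving $\dim_{\mathbb R}Q^{\mathbb R}(\cratio)=0$. By the constant-rank argument of Corollary \ref{cor:realsurface} this makes $P(\Theta)$ a discrete real-analytic set. Finally, the existence and rigidity of spherical circle patterns with prescribed intersection angles (Koebe--Andreev--Thurston and Rivin, \cite{Bobenko2004,Rivin}), together with the M\"obius-invariance of cross ratios, force $P(\Theta)$ to consist of exactly one point $\{\cratio_0\}$.
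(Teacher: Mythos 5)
Your reduction is the right one: by Proposition \ref{prop:hqdharmonic} everything comes down to showing that every discrete harmonic function for the cotangent Laplacian of the developing map is linear, and you correctly diagnose why the torus-style maximum principle does not apply verbatim --- after normalizing so that exactly one circumdisk contains $\infty$, that one face is negatively oriented and contributes negative cotangent weights, while the nonconstant linear functions show the kernel cannot reduce to constants. Your appeal to Rivin for $P(\Theta)=\{\cratio_0\}$ also matches the paper.

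However, the step that actually carries the proposition is left unproved. You write that realizing the pattern as spherical caps on the round sphere ``should restore non-negativity of all weights and leave the kernel spanned exactly by the linear data,'' but $Q^{\mathbb{R}}(\cratio)$ is defined through the \emph{planar} cotangent Laplacian of the developing map (Definition \ref{def:cotlap} and Proposition \ref{prop:hqdharmonic}); no spherical cotangent energy is defined in the paper, and you neither define one nor show that its kernel computes the same space. As stated this is a conjecture, not an argument, and it is precisely the point where the proof must do work. The paper's resolution is concrete and avoids the spherical detour entirely: delete the single face whose circumdisk contains $\infty$; what remains is an embedded Delaunay triangulation of a closed disk in the plane with exactly three boundary vertices and nonnegative weights at all interior vertices, so by uniqueness of the Dirichlet problem a harmonic function there is determined by its three boundary values; since the $3$-dimensional space of linear functions $u_i=\langle a,z_i\rangle+b$ already realizes every triple of boundary values (the three boundary vertices are non-collinear, being distinct points on a circle), every harmonic function is linear and hence $q\equiv 0$. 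You should replace the ``conformally adapted model'' paragraph with this face-removal argument, or else supply an actual definition and kernel identification for the spherical energy you invoke.
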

\begin{proof}
	The fact that $P(\Theta)=\{\cratio_0\}$ consists of only one element is asserted by Rivin's variational method \cite{Rivin}.
	
	To see $\dim_{\mathbb{R}} Q^{\mathbb{R}}(\cratio) =0$, we can use discrete harmonic functions again. Consider a developing map of the underlying complex projective structure, it covers the sphere exactly once since there is only one complex projective structure on the sphere. We normalize the developing map such that only one circumdisk contains infinity and remove the corresponding face. What remains is an embedded triangulated disk in the plane with three boundary vertices. Notice that by Dirichlet's principle, any discrete harmonic function on a domain with three boundary vertices must be a linear function.
	
	Thus, for any $q \in Q^{\mathbb{R}}(\cratio)$, the corresponding discrete harmonic function from Proposition \ref{prop:hqdharmonic} is a linear function on the triangulated disk. Hence $q\equiv 0$.
\end{proof}

\begin{proposition}
	The space of cross ratio systems (not necessary to be Delaunay) on a triangulated sphere $M=(V,E,F)$ is a manifold of complex dimension $|V|-3$.
\end{proposition}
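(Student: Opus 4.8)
The plan is to identify the space of cross ratio systems on the sphere with a space of realizations modulo Möbius transformations, exploiting that the sphere is simply connected so that every cross ratio system has a genuine single-valued developing map. Fix one face $\{i_0 j_0 k_0\}$. By the Proposition in Section \ref{sec:develop}, each cross ratio system $\cratio$ admits a developing map $z:V\to\mathbb{CP}^1$, unique up to $PSL(2,\mathbb{C})$; normalizing $z_{i_0}=0$, $z_{j_0}=1$, $z_{k_0}=\infty$ kills the Möbius freedom and yields a map $\mathrm{dev}:\cratio\mapsto z$ into $U'\subset(\mathbb{CP}^1)^{V\setminus\{i_0,j_0,k_0\}}$, the open set of normalized realizations whose cross ratios are all defined and nonzero. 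Conversely, Lemma \ref{lem:cross1} shows that every $z\in U'$ produces a cross ratio system $\mathrm{cr}(z)$ via \eqref{eq:developingmap}, and by uniqueness of the developing map $\mathrm{dev}$ and $\mathrm{cr}$ are mutually inverse bijections.

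To upgrade this bijection to a biholomorphism I would exhibit both maps as holomorphic. The map $\mathrm{cr}:U'\to\mathbb{C}^E$ is manifestly holomorphic, being a ratio of polynomials in the $z_v$. For the inverse I would build a global left inverse: propagating the developing map from the base face along a fixed spanning tree of the dual graph expresses each $z_v$ as an explicit Möbius function of the cross ratios, defining a holomorphic map $G$ on a neighborhood of the solution set in $\mathbb{C}^E$ with $G\circ\mathrm{cr}=\mathrm{id}_{U'}$. This forces $\mathrm{cr}$ to be an injective holomorphic immersion that is a homeomorphism onto its image $\Phi^{-1}\{0\}$, hence a holomorphic embedding. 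Therefore $\Phi^{-1}\{0\}$ is biholomorphic to the open set $U'\subset(\mathbb{CP}^1)^{V\setminus\{i_0,j_0,k_0\}}$, a complex manifold of dimension $|V|-3$.

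An equivalent infinitesimal formulation, which ties the count to Definition \ref{prop:jacobian}, is to show directly that $\dim_{\mathbb{C}}Q^{\mathbb{C}}(\cratio)=|V|-3$ for every cross ratio system and then invoke the constant rank theorem exactly as in Corollary \ref{cor:realsurface}. Here one sends an infinitesimal deformation $\dot z:V\to\mathbb{C}$ of the developing map to its logarithmic cross ratio change $q=\dot{\cratio}/\cratio$, as in the correspondence $(1)\leftrightarrow(2)$ of Proposition \ref{prop:hqdharmonic} but without the reality constraint. This assignment lands in $Q^{\mathbb{C}}(\cratio)$ because any deformation of a realization keeps $\Phi=0$; it is surjective because the defining equations of $Q^{\mathbb{C}}$ are precisely the linearized vertex-closing conditions, so a given $q$ can be integrated to a globally consistent $\dot z$ on the simply connected sphere; and its kernel is exactly the $3$-complex-dimensional space of infinitesimal Möbius transformations $\dot z_v=a+bz_v+cz_v^2$, which act infinitesimally freely since each face carries three distinct vertices. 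The two approaches agree, and both explain the discrepancy with the naive bound $6g-6+|V|=|V|-6$ of Proposition \ref{prop:lowerbound}: there are exactly three relations among the $2|V|$ vertex equations, matching the $3$-dimensional Möbius gauge.

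The main obstacle is verifying the global well-definedness and holomorphy of the inverse correspondence, i.e. that $\mathrm{dev}$ (equivalently the integration $q\mapsto\dot z$) closes up consistently. Around each vertex this is guaranteed by the equations of Definition \ref{def:crsys}, whose linearizations are exactly the equations cutting out $Q^{\mathbb{C}}$, and globally it is automatic because $\pi_1$ of the sphere is trivial. The remaining care is bookkeeping the vertices sent to $\infty$ and the possibility of coinciding points; both are handled by first applying a Möbius transformation so that no vertex lies at $\infty$ and by restricting to the open locus $U'$ on which all relevant points are distinct, which also circumvents any pathology of the $PSL(2,\mathbb{C})$-quotient by replacing it with the normalizing slice.
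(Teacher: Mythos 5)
Your proof is correct and follows essentially the same route as the paper: the paper likewise identifies cross ratio systems on the sphere with realizations of the vertices modulo M\"{o}bius transformations (using that the sphere is simply connected so the developing map is globally defined), giving complex dimension $|V|-3$, and likewise notes the parallel infinitesimal count $\dim_{\mathbb{C}} Q^{\mathbb{C}}(\cratio)=|V|-3$ via arbitrary vertex deformations modulo infinitesimal M\"{o}bius transformations. Your write-up merely supplies the normalization slice, the holomorphy of the two mutually inverse maps, and the bookkeeping that the paper leaves implicit.
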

\begin{proof}
	We can put the vertices on the sphere arbitrarily as long as the endpoints of every edge are distinct. Thus, the configuration space modulo M\"{o}bius transformations has complex dimension $|V|-3$. 
	
	Similarly, one can assign a vector field to each vertex arbitrarily for a given developing map. Modulo infinitesimal M\"{o}bius transformations, it implies $\dim_{\mathbb{C}} Q^{\mathbb{C}}(\cratio)=|V|-3$.
\end{proof}

\begin{conjecture}\label{thm:dim}
	
	For a Delaunay cross ratio system $\cratio$ on  a closed triangulated surface with genus $g$, we have
	\begin{align*}
	\dim_{\mathbb{C}} Q^{\mathbb{C}}(\cratio) = \begin{cases}
	|V| + 1  \quad &\text{if } g =1 \\
	|V| + 6g- 6 \quad &\text{if } g \geq 2
	\end{cases}
	\end{align*}
	and
	\begin{align*}
	\dim_{\mathbb{R}} Q^{\mathbb{R}}(\cratio) = 
	6g - 6 \quad \text{if } g \geq 2
	\end{align*}
\end{conjecture}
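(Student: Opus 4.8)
The plan is to establish the conjecture by matching upper bounds to the lower bounds of Proposition \ref{prop:lowerbound}, following the two-sided scheme that settled the torus in Theorem \ref{prop:kernel}. The inequalities $\dim_{\mathbb{R}} Q^{\mathbb{R}}(\cratio)\ge 6g-6$ and $\dim_{\mathbb{C}} Q^{\mathbb{C}}(\cratio)\ge 6g-6+|V|$ are immediate from counting variables against constraints, so the whole content is the reverse inequalities. The only genus where the naive count is not already sharp is $g=1$ in the complex case, where one must produce the extra $+1$. First I would recover this exactly as in Lemma \ref{lem:g1lower}, observing that the argument there never used $q$ to be real: with an affine developing map (Corollary \ref{cor:affinedev}), the complex quantity $\hat q_{ij}/(z_j-z_i)$ in type I, and $z_i\,\hat q_{ij}/(z_j-z_i)$ in type II, descends to the directed edges of the torus, so that summing over a fundamental domain yields an identity holding for every $q$. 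This is one \emph{complex} linear dependence among the $2|V|$ vertex equations of Definition \ref{prop:jacobian}, upgrading the lower bound to $\dim_{\mathbb{C}}Q^{\mathbb{C}}(\cratio)\ge|V|+1$, after which it remains to exclude any further dependence.

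For the upper bounds the plan is to promote the harmonic-function correspondence of Proposition \ref{prop:hqdharmonic} to an equivariant statement on the universal cover and count solutions cohomologically. A class in $Q^{\mathbb{C}}(\cratio)$ is an equivariant infinitesimal deformation $\dot z$ of the developing map modulo infinitesimal M\"obius transformations; its data naturally splits into the motion of the $|V|$ vertices inside a fundamental domain together with the infinitesimal change of the holonomy representation, the latter living in the twisted cohomology $H^1(\pi_1(M),\mathfrak{sl}_2(\mathbb{C})_{\mathrm{Ad}\rho})$, of complex dimension $6g-6$ for $g\ge2$. The surface relation $\prod_{k=1}^{g}[\gamma_{2k-1},\gamma_{2k}]=1$ produces precisely the cocycle compatibility constraints generalizing Lemma \ref{lem:periodicu}; showing that these constraints are independent and that the vertex motions survive should yield $\dim_{\mathbb{C}} Q^{\mathbb{C}}(\cratio)\le |V|+6g-6$. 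Running the same machinery with the reality condition $\Arg\cratio\equiv\Theta$ imposed from the start---so that $\dot z$ preserves $\Imaginary\log\cratio$ and $q$ is real---should replace the twisted cohomology by its appropriate real form and remove the vertex motions, leaving $\dim_{\mathbb{R}} Q^{\mathbb{R}}(\cratio)\le 6g-6$, in direct analogy with the passage from $\dim\mathcal{U}\le5$ to $\dim_{\mathbb{R}}Q^{\mathbb{R}}\le2$ in Lemma \ref{lem:g1upper}.

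The hard part will be that the torus upper bound rested on a genuinely global tool that is unavailable for $g\ge2$: a realization $z:\hat V\to\mathbb{C}$ with nonnegative cotangent weights and a maximum principle, which existed only because projective tori reduce to affine ones (Proposition \ref{prop:gunning}). For higher genus no such reduction exists---the developing map is honestly $\mathbb{CP}^1$-valued and its image may be all of the Riemann sphere---so some circumdisks contain $\infty$, cotangent weights can turn negative, and the maximum principle that forced bounded discrete harmonic functions to be constant fails outright. Near the canonical hyperbolic structure the Fuchsian developing map has image in a round disk, the weights stay nonnegative, and I expect the equivariant argument above to reproduce the local count of Kojima, Mizushima and Tan; the genuine obstacle is to control the kernel dimension \emph{uniformly} as the holonomy leaves the Fuchsian locus and the potential-theoretic input disappears.

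Overcoming this will likely require trading the maximum principle for a purely cohomological estimate together with a higher-genus injectivity statement. Concretely, I would try to interpret the linearized vertex equations of Definition \ref{prop:jacobian} as a twisted cochain complex whose cohomology is computed by the holonomy representation, so that the corank is forced to be exactly $6g-6$ (respectively $6g-6+|V|$) independently of the maximum principle, and then invoke the analogue of Proposition \ref{prop:injective}---injectivity of $d(\hol\circ f)$ for $g\ge2$---to rule out spurious kernel directions. Establishing that injectivity without a maximum principle, presumably through a Dirichlet-energy or Riemann--bilinear identity valid for disk-free developing maps, is where I expect the essential new work to lie.
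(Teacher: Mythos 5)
This statement is labelled as a \emph{conjecture} in the paper and is listed among the open questions of Section \ref{sec:discussion}; the paper contains no proof of it, so there is nothing to match your argument against, and your text is a research plan rather than a proof. The parts you do carry out are consistent with what the paper already establishes: the lower bounds are Proposition \ref{prop:lowerbound}, and your observation that the linear dependence of Lemma \ref{lem:g1lower} never uses reality of $q$, hence upgrades the complex lower bound on the torus to $\dim_{\mathbb{C}}Q^{\mathbb{C}}(\cratio)\ge |E|-2|V|+1=|V|+1$, is correct (with the small caveat that the equivalence between the $\cratio$-form of Definition \ref{prop:jacobian} and the developing-map form \eqref{qsum}--\eqref{zsum} must be checked for complex $q$; it is algebraic and does go through). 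You also correctly identify the genuine obstruction for $g\ge 2$: Proposition \ref{prop:gunning} has no analogue, so there is no affine developing map, no nonnegativity of the cotangent weights, and no maximum principle.

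The concrete gap is in your proposed replacement for that missing tool. You suggest ``trading the maximum principle for a purely cohomological estimate,'' interpreting the linearized equations as a twisted cochain complex whose corank is forced by the holonomy representation to be exactly $6g-6$ (resp.\ $6g-6+|V|$) ``independently of the maximum principle.'' The paper's own Jessen construction (Section \ref{sec:jessen}) rules this out: attaching a M\"obius image of Jessen's icosahedron inside a face changes neither the holonomy representation nor the twisted cohomology $H^1(\pi_1(M),\mathfrak{sl}_2(\mathbb{C})_{\mathrm{Ad}\rho})$, yet it raises $\dim Q^{\mathbb{R}}$ by $1$ each time, so $\dim Q^{\mathbb{R}}(\cratio)$ is not a function of the holonomy and no formal corank computation can give the conjectured value. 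Any proof must use the Delaunay hypothesis analytically, exactly where the torus proof used it (nonnegative weights $c_{ij}=\sin(\Arg\cratio_{ij})/(\cos\angle jki\,\cos\angle ilj)\ge 0$ feeding the maximum principle in Lemma \ref{lem:dimU}). Your own closing sentence concedes that establishing the needed injectivity without that input ``is where the essential new work'' lies; that work is precisely the content of the conjecture, so the proposal does not prove the statement.
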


\begin{conjecture} \label{conj:full}
	For any Delaunay angle structure $\Theta$ on  a closed triangulated surface with genus $g>1$, the holonomy map
       \[ \hol: P(\Theta) \to \mathcal{X}(M) \]
       is an embedding of a manifold homeomorphic to $\mathbb{R}^{6g-6}$. Furthermore its projection to the Teichm\"{u}ller space is diffeomorphic.
\end{conjecture}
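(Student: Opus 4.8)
The plan is to reproduce the four-step architecture used for the torus---manifold structure, properness, infinitesimal injectivity, and the covering/diffeomorphism conclusion---while systematically replacing the Euclidean (affine) background geometry, which was special to $g=1$, by the hyperbolic background attached to the canonical element $\cratio_0\in P(\Theta)$ furnished by discrete uniformization \cite{Bobenko2004,Rivin}. Concretely, I would first try to settle the $g>1$ case of Conjecture \ref{thm:dim}, namely $\dim_{\mathbb{R}}Q^{\mathbb{R}}(\cratio)=6g-6$ along $P(\Theta)$. The lower bound $\dim_{\mathbb{R}}Q^{\mathbb{R}}(\cratio)\geq 6g-6$ is Proposition \ref{prop:lowerbound}, so the content is the matching upper bound. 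Once equality holds, the Jacobian $D(\Phi\circ\exp_{\Theta})$ has constant rank along its zero set and the constant rank theorem gives, exactly as in Corollary \ref{cor:realsurface}, that $P(\Theta)$ is a real-analytic manifold of dimension $6g-6$.

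For the upper bound I would translate discrete holomorphic quadratic differentials into discrete harmonic data via the correspondence of Proposition \ref{prop:hqdharmonic} and invoke a maximum principle. The obstacle is that Proposition \ref{prop:hqdharmonic} and the non-negativity of the cotangent weights were derived from an \emph{affine} (hence Euclidean, fold-free) developing map, which does not exist when $g>1$. I would therefore work with the developing map of the hyperbolic element $\cratio_0$, whose image lies in a disk with Fuchsian holonomy, and seek a hyperbolic analogue of the cotangent Laplacian and of the correspondence in Proposition \ref{prop:hqdharmonic}; the maximum principle would then bound the space of discrete harmonic functions with the appropriate equivariance, forcing $\dim_{\mathbb{R}}Q^{\mathbb{R}}(\cratio)\leq 6g-6$. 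Properness of $\pi\circ f$ is the second pillar, and here the work of Schlenker and Yarmola \cite{Schlenker2018}, who proved that the projection of fixed-angle circle patterns to $\mathcal{T}(M)$ is proper, should supply what the torus-specific Proposition \ref{prop:proper} provided for $g=1$.

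The third pillar is that $d(\pi\circ f)$ is injective everywhere, i.e. a nonzero discrete holomorphic quadratic differential necessarily deforms the conformal structure. For the torus this was Proposition \ref{prop:injective}, proved by computing the Dirichlet energy of the associated discrete harmonic function from its periods through the Bobenko--Skopenkov bilinear identity \cite{Bobenko2016} and comparing it with the strictly smaller energy of the unique smooth minimizer carrying the same periods. I would set up the genus-$g$ analogue as a statement about discrete harmonic $1$-forms and their period pairing, comparing the discrete energy against the harmonic representative of the same cohomology class for the underlying smooth conformal structure. A notable difference is that the torus comparison degenerates precisely at the Euclidean point (its periods become linear rather than constant), producing the single branch point, whereas Conjecture \ref{conj:full} asserts a genuine diffeomorphism with no branch point; the energy inequality must therefore be arranged to stay strict even at $\cratio_0$, which is plausible because the Fuchsian holonomy does not degenerate in the way the Euclidean translation holonomy does.

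Granting the first three pillars, $\pi\circ f\colon P(\Theta)\to\mathcal{T}(M)$ is a proper local diffeomorphism between manifolds of equal dimension $6g-6$, hence a finite-sheeted covering map; since $\mathcal{T}(M)\cong\mathbb{R}^{6g-6}$ is simply connected, each connected component of $P(\Theta)$ maps diffeomorphically onto $\mathcal{T}(M)$. The remaining---and I expect hardest---task is to upgrade this to a global diffeomorphism, which amounts to single-sheetedness: that no two distinct Delaunay circle patterns with angles $\Theta$ share the same conformal structure. This is exactly the step that is still open even for the torus, where only a covering with at most one branch point is known, so it is the true obstacle for $g>1$ and will likely require genuinely new input beyond adapting Rivin's uniqueness and the harmonic/energy machinery. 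The holonomy embedding would follow once injectivity of $\hol\circ f$ is established; note however that, unlike the torus case where affine reduction together with the uniqueness in Proposition \ref{prop:rivin} forced equality of cross ratio systems with coincident holonomy, for $g>1$ one must rule out distinct circle patterns sharing the same $PSL(2,\mathbb{C})$ holonomy directly, since complex projective structures with identical holonomy generally form a nontrivial grafting orbit.
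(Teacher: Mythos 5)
The statement you are addressing is stated in the paper as Conjecture \ref{conj:full}, in the ``Discussion and open questions'' section; the paper offers no proof of it and explicitly says that the analogues of Theorems \ref{thm:holo} and \ref{thm:covering} for $g>1$ are open. Your text is accordingly not a proof but a research outline, and---to your credit---you identify this yourself. The concrete gaps are the following. First, the upper bound $\dim_{\mathbb{R}}Q^{\mathbb{R}}(\cratio)\leq 6g-6$ is itself the open Conjecture \ref{thm:dim}; your plan to run the maximum-principle argument against ``a hyperbolic analogue of the cotangent Laplacian'' has no established substitute for Proposition \ref{prop:hqdharmonic}, whose derivation (non-negative cotangent weights, fold-free triangles, well-defined conjugate harmonic function) depends on Corollary \ref{cor:affinedev}, i.e.\ on the affine reduction that exists only for $g=1$. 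Moreover the bound must hold at \emph{every} point of $P(\Theta)$, not just at the uniformizing element $\cratio_0$: a generic Delaunay cross ratio system on a higher-genus surface has a developing map that need not be injective or have image in a disk, so there is no domain on which a maximum principle obviously applies. Second, the infinitesimal injectivity of $d(\pi\circ f)$ cannot be a direct transplant of Proposition \ref{prop:injective}: that proof uses the explicit model $d(z)=e^{cz}$, the fact that the fiber of $\pi$ over a fixed conformal torus is one complex parameter $\dot c$, and the Riemann bilinear identity for periods along $\gamma_1,\gamma_2$. For $g>1$ the fiber of $\pi$ has real dimension $6g-6$, equal to the expected dimension of $P(\Theta)$, so what is required is a genuine transversality statement between $T P(\Theta)$ and the fiber tangent space, for which no discrete-harmonic energy comparison is currently available.

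Third, even granting both pillars plus properness (where citing Schlenker--Yarmola is reasonable), the covering argument only gives that each connected component of $P(\Theta)$ maps diffeomorphically onto $\mathcal{T}(M)\cong\mathbb{R}^{6g-6}$; global injectivity of $\pi\circ f$ then reduces to connectedness of $P(\Theta)$, which is unknown, and---as you note---the corresponding single-sheetedness statement is open even for the torus. Finally, the holonomy embedding faces the grafting obstruction you mention: for $g>1$, non-isomorphic projective structures can share a holonomy representation, and the torus argument (affine reduction plus the uniqueness in Proposition \ref{prop:rivin}) has no analogue. In short, each of the four pillars contains a step that is not merely technical but is precisely what makes the statement a conjecture; your outline is a fair map of the terrain but does not close any of these gaps.
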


\subsection{Prescribed hyperbolic metrics}

Indeed, every Delaunay cross ratio system is associated with a locally convex pleated surface in hyperbolic 3-space, where the shearing coordinate between neighboring facets is $\Re \log cr$ and the dihedral angle is captured by $\Arg \cratio = \Im \log \cratio$ (See \cite{Bobenko2010} for the construction). The Delaunay condition is equivalent to the surface being locally convex. The space $P(\Theta)$ describes all these pleated surfaces with prescribed dihedral angles. In contrast, one can consider those locally convex pleated surfaces with a prescribed hyperbolic metric $d$. Whenever a triangulation $\mathcal{T}$ of the surface is fixed, the hyperbolic metric $d$ is described by the shear coordinates $X_{d,\mathcal{T}}:E \to \mathbb{R}$. Thus the space of Delaunay cross ratio systems that induce pleated surfaces with hyperbolic metric $d$ can be written as
\[
P(d) = \cup_{\mathcal{T}} P(X_{d,\mathcal{T}})
\]  
where $P(X_{d,\mathcal{T}})$ consists of all the Delaunay cross ratio systems $\cratio$ on $\mathcal{T}$ with $\log |\cratio|\equiv X_{d,\mathcal{T}}$ and the union is taken over all triangulations of the surface with the same vertex set \cite{Bobenko2010,Gu20182}. This space is non-empty by the discrete uniformization theorem. If the underlying surface is a torus, then the argument in Section \ref{sec:cp1tori} and \ref{sec:immersion} yields that $P(d)$ is a surface and its projection to the Teichm\"{u}ller space is an immersion. It is interesting to consider Conjecture \ref{conj:full} with prescribed hyperbolic metrics in place of prescribed dihedral angles.

\section*{Acknowledgment}

The author would like to thank Feng Luo and Richard Schwartz for fruitful discussions and Masashi Yasumoto for comments on the
draft.

\appendix

\section{Rigidity on complex projective structures}

Here we provide an alternative proof that two Delaunay cross ratio systems inducing the same complex projective structure on the torus must be identical. Though this result can be deduced from Rivin's variational method (Section \ref{sec:Rivin}), the following proof involves the maximum principle which might be inspiring for surfaces with genus $g\geq2$. 
\begin{theorem}
	Let $cr, \tilde{cr}:E \to \mathbb{C}$ be two Delaunay cross ratios system on a triangulated torus with $\Arg(cr) = \Arg(\tilde{cr})$. Suppose they admit developing map $z, \tilde{z}$ with affine holonomy $\rho_r(z)=\alpha_r z + \beta_r$ and $\tilde{\rho}_r(z)=\tilde{\alpha}_r z + \tilde{\beta}_r$ such that $|\alpha_r|=|\tilde{\alpha}_r|$ for $r=1,2$. Then the developing maps $z,\tilde{z}$ differ by a similarity and we have $\cratio= \tilde{\cratio}$.
\end{theorem}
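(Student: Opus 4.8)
The plan is to reduce the statement to the classical rigidity of Euclidean circle patterns and to prove the latter by a maximum principle on circumradii. By Corollary~\ref{cor:affinedev} both developing maps have bounded circumdisks and positively oriented faces, so each face $f=\{ijk\}$ of the universal cover carries a genuine circumcircle, of radius $R_f>0$ under $z$ and $\tilde R_f>0$ under $\tilde z$. Since each $\rho_r$ acts as a Euclidean similarity with scaling factor $|\alpha_r|$, circumradii transform by $R_{\gamma_r f}=|\alpha_r|\,R_f$, and likewise $\tilde R_{\gamma_r f}=|\tilde\alpha_r|\,\tilde R_f$. Because $|\alpha_r|=|\tilde\alpha_r|$ for $r=1,2$, the function $\psi_f:=\log R_f-\log\tilde R_f$ is invariant under the deck group and hence descends to the finite set of faces of the torus. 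The whole difficulty is concentrated in showing that $\psi$ is constant; once this is known, proportionality of all circumradii yields the conclusion.

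To prove $\psi$ is constant I would run a discrete maximum principle on the dual graph. The key local input is that the two circumcircles of adjacent faces $f=\{ijk\}$ and $g=\{jil\}$ both pass through $z_i$ and $z_j$ and meet at the angle $\Theta_{ij}=\Arg\cratio_{ij}=\angle jki+\angle ilj$, which is the same for $z$ and $\tilde z$. Writing $\theta^{(ij)}_f$ for the inscribed angle $\angle jki$ (half the central angle of the chord $z_iz_j$ at the center of $C_f$), one has the edge relation $\theta^{(ij)}_f+\theta^{(ij)}_g=\Theta_{ij}$ together with the chord-length identity $|z_i-z_j|=2R_f\sin\theta^{(ij)}_f=2R_g\sin\theta^{(ij)}_g$; eliminating the chord length gives $R_f/R_g=\sin(\Theta_{ij}-\theta^{(ij)}_f)/\sin\theta^{(ij)}_f$, so $\theta^{(ij)}_f$ is a strictly decreasing function of $\log R_f-\log R_g$ (edges with $\Theta_{ij}=0$ force coincident circumcircles, hence $R_f=R_g$, and are harmless). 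Now let $\psi$ attain its maximum at a face $f_0=\{i_0j_0k_0\}$. For each of the three neighbours $g$ across an edge $e$, $\psi_{f_0}\ge\psi_g$ means $\log R_{f_0}-\log R_g\ge\log\tilde R_{f_0}-\log\tilde R_g$, whence $\theta^{(e)}_{f_0}\le\tilde\theta^{(e)}_{f_0}$ by monotonicity. Summing the three inscribed angles of $f_0$, which add to $\pi$ for both patterns, forces equality in each inequality; strict monotonicity then gives $\psi_g=\psi_{f_0}$ for all three neighbours. By connectedness of the dual graph of the torus, $\psi$ is constant, i.e.\ $R_f=C\,\tilde R_f$ for a fixed $C>0$.

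Finally, with all circumradii proportional and all intersection angles equal, the chord-length identity (homogeneous of degree one in the radii) shows every edge length satisfies $|z_i-z_j|=C\,|\tilde z_i-\tilde z_j|$. Laying out the triangulation face by face from a common seed---both developing maps being positively oriented with no folds by Corollary~\ref{cor:affinedev}---the two developing maps then differ by a fixed orientation-preserving similarity $z=S\circ\tilde z$, and since cross ratios are invariant under similarities, $\cratio=\tilde\cratio$, as claimed. The main obstacle, and the step deserving the most care, is the local monotonicity lemma together with the relations $\theta^{(ij)}_f+\theta^{(ij)}_g=\Theta_{ij}$ and $\theta^{(ij)}_f=\angle jki$: these rest on the Delaunay configuration of Figure~\ref{fig:delaunay} (left), guaranteed here by Corollary~\ref{cor:affinedev}, and one must check that the degenerate edges $\Theta_{ij}=0$ and the orientation conventions do not disrupt the propagation of the maximum. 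I expect the vertex condition $\sum_j\Theta_{ij}=2\pi$ to enter only in ensuring the patterns are honest flat tori, and not in the propagation step itself.
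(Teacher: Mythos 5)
Your proposal is essentially the paper's own proof: both run a maximum principle on the ratio of circumradii over the dual graph, both use the same local identity relating the two radii across an edge to the (fixed) intersection angle via the common chord, and both close the argument by summing angles around the extremal face. The one point where your sketch is weaker is exactly the one you flagged: at a maximal face having an edge with $\Theta_{ij}=0$, the coincidence of the two circumcircles gives you $\psi_g=\psi_{f_0}$ for free, but it destroys the three-term inscribed-angle sum $\sum\theta^{(e)}_{f_0}=\pi$ as a propagation device, since the degenerate edge contributes no monotone inequality. The paper resolves this by first deleting all $\Theta=0$ edges and merging the adjacent faces into convex cyclic polygons (convexity proved by induction on the number of merged triangles), then summing the \emph{central} angles of the extremal polygon to $2\pi$; you should incorporate that merging step, or equivalently take the maximum over the equivalence classes of faces joined by $\Theta=0$ edges. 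The rest --- descent of $\psi$ to the torus via $|\alpha_r|=|\tilde\alpha_r|$, strict monotonicity of the inscribed angle in $\log R_f-\log R_g$, and the final reconstruction of the similarity from proportional radii --- matches the paper.
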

\begin{proof}
	We claim that the corresponding circumscribed circles differ by a global scaling. It follows from the maximum principle as follows:
	
	Whenever there is an edge with $\Arg \cratio =0$, we remove the edge and merge the neighboring faces since the corresponding circumcircles coincide. By the definition of Delaunay cross ratio systems, we obtain a cell decomposition $(V,E,F)$ of $M$. The Delaunay condition further implies that under the developing map each face is a convex polygon. Indeed, each face is obtained by merging triangles and so we can argue by induction: If the face consists of one triangle, then it is convex obviously. If there are two triangles $\{ijk\},\{ilj\}$, then $\Arg \cratio_{ij} = 0$ implies they form a convex quadrilateral. Suppose we have a convex cyclic k-gon and attach a new triangle $\{jil\}$ to an edge $\{ij\}$ of the k-gon, then the new vertex $z_l$ must lie on the arc between $z_i,z_j$ not containing the other vertices since $\Arg \cratio_{ij} = 0$. Hence the resulting $(k+1)$-gon is convex cyclic.
	
	We denote the $R:\hat{F}\to \mathbb{R}_{>0}$ and $\tilde{R}:\hat{F}\to \mathbb{R}_{>0}$ the radii of the circumcircles under the developing map $z$ and $\tilde{z}$ of the universal cover $\tilde{M}$. We consider the ratio of the radii $\sigma:= \tilde{R}/R$. Since the holonomy is affine, the ratio $\sigma$ is periodic. Hence, $\sigma$ has a local maximum on a face $f_0$. We denote the neighboring faces as $f_1,f_2,\dots, f_k$ and the intersection angles of the circumcircles as $\phi_1,\phi_2, \dots, \phi_k$. Note $\phi_i = \Arg \cratio_i \in (0,\pi)$.
	
	We focus on the developing map $z$. Suppose $z_i z_{i+1}$ is the common chord shared by the circles at $f_0$ and $f_i$. We write the circumcenter of $f_0$ as $O$ and denote
	$2 \alpha_i$ the angle at the center from $Oz_i$ to $Oz_{i+1}$ in counterclockwise orientation (See Figure \ref{fig:centralangles}). Then
	\[
	R_0 \sin \alpha_i = R_i \sin (\phi_i - \alpha_i) = R_i( \sin \phi_i \cos \alpha_i - \cos \phi_i \sin \alpha_i)
	\]
	Hence
	\[
	\cot \alpha_i = ( \frac{1}{\sin \phi_i}(\frac{R_0}{R_i} + \cos \phi_i))
	\]
	Since $\sigma_0 \geq \sigma_i $, we have
	\[
	\cot \alpha_i =  \frac{1}{\sin \phi_i}( \frac{\sigma_i \tilde{R}_0}{\sigma_0 \tilde{R}_i} + \cos \phi_i) \leq  \frac{1}{\sin \phi_i}( \frac{\tilde{R}_0}{\tilde{R}_i} + \cos \phi_i) = \cot \tilde{\alpha}
	\]
	Note $\cot(\cdot)$ is monotone decreasing on $(0, \pi)$. Hence
	$\alpha_i \geq \tilde{\alpha}_i$.
	\begin{figure} \centering
		\begin{minipage}{0.49\textwidth}
			\includegraphics[width=1\textwidth]{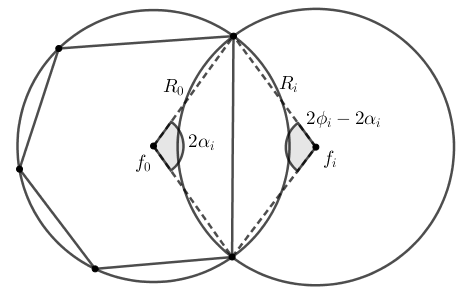}
		\end{minipage}
		\begin{minipage}{0.49\textwidth}
			\includegraphics[width=0.9\textwidth]{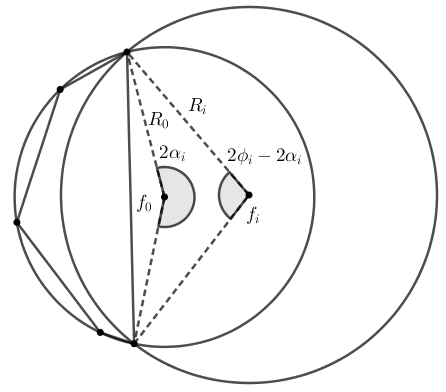}
		\end{minipage}
		\caption{Central angles. Left: Center within the polygon. Right: Center outside the polygon.}
		\label{fig:centralangles}
	\end{figure}
	
	On the other hand, because the vertices of the face $f_0$ are in cyclic order on the circle, the sum of the central angles are equal to $2\pi$, we have
	\[
	2\pi = \sum 2 \alpha_i \geq \sum 2 \tilde{\alpha}_i = 2 \pi.
	\]
	Thus all the equalities should hold and $\sigma_i = \sigma_0$ for $i=1,2,\dots,k$. Hence the maximum principle holds and $\sigma$ must be constant.
\end{proof}

\bibliographystyle{hplain}
\bibliography{holomorphicquad}

\begin{thebibliography}{10}

\bibitem{Agafonov2000}
Sergey~I. Agafonov and Alexander~I. Bobenko.
\newblock Discrete {$Z^\gamma$} and {P}ainlev\'{e} equations.
\newblock {\em Internat. Math. Res. Notices}, (4):165--193, 2000.

\bibitem{Baues2014}
Oliver Baues.
\newblock The deformation of flat affine structures on the two-torus.
\newblock In {\em Handbook of {T}eichm\"{u}ller theory. {V}ol. {IV}}, volume~19
  of {\em IRMA Lect. Math. Theor. Phys.}, pages 461--537. Eur. Math. Soc.,
  Z\"{u}rich, 2014.

\bibitem{Beardon1994}
Alan~F. Beardon, Tomasz Dubejko, and Kenneth Stephenson.
\newblock Spiral hexagonal circle packings in the plane.
\newblock {\em Geom. Dedicata}, 49(1):39--70, 1994.

\bibitem{Bobenko2016}
Alexander Bobenko and Mikhail Skopenkov.
\newblock Discrete {R}iemann surfaces: linear discretization and its
  convergence.
\newblock {\em J. Reine Angew. Math.}, 720:217--250, 2016.

\bibitem{Bobenko2010}
Alexander~I. Bobenko, Ulrich Pinkall, and Boris~A. Springborn.
\newblock Discrete conformal maps and ideal hyperbolic polyhedra.
\newblock {\em Geom. Topol.}, 19(4):2155--2215, 2015.

\bibitem{Bobenko2004}
Alexander~I. Bobenko and Boris~A. Springborn.
\newblock Variational principles for circle patterns and {K}oebe's theorem.
\newblock {\em Trans. Amer. Math. Soc.}, 356(2):659--689, 2004.

\bibitem{Bobenko2008}
Alexander~I. Bobenko and Yuri~B. Suris.
\newblock {\em Discrete differential geometry: Integrable structure}, volume~98
  of {\em Graduate Studies in Mathematics}.
\newblock American Mathematical Society, Providence, RI, 2008.

\bibitem{Fock1993}
Vladimir Fock.
\newblock Description of moduli space of projective structures via fat graphs.
\newblock 1993, arXiv:9312193.

\bibitem{Fock2006}
Vladimir Fock and Alexander Goncharov.
\newblock Moduli spaces of local systems and higher {T}eichm\"{u}ller theory.
\newblock {\em Publ. Math. Inst. Hautes \'{E}tudes Sci.}, (103):1--211, 2006.

\bibitem{Gu20182}
Xianfeng Gu, Ren Guo, Feng Luo, Jian Sun, and Tianqi Wu.
\newblock A discrete uniformization theorem for polyhedral surfaces {II}.
\newblock {\em J. Differential Geom.}, 109(3):431--466, 2018.

\bibitem{Gunning1966}
R.~C. Gunning.
\newblock {\em Lectures on {R}iemann surfaces}.
\newblock Princeton Mathematical Notes. Princeton University Press, Princeton,
  N.J., 1966.

\bibitem{He1990}
Zheng-Xu He.
\newblock Solving {B}eltrami equations by circle packing.
\newblock {\em Trans. Amer. Math. Soc.}, 322(2):657--670, 1990.

\bibitem{Jessen1967}
B{\o}rge Jessen.
\newblock Orthogonal icosahedra.
\newblock {\em Nordisk Mat. Tidskr}, 15:90--96, 1967.

\bibitem{fKen2018}
Richard Kenyon, Wai~Yeung Lam, Sanjay Ramassamy, and Marianna Russkikh.
\newblock Dimers and circle patterns.
\newblock 2018, arXiv:1810.05616.

\bibitem{Boris2006}
Liliya Kharevych, Boris Springborn, and Peter Schr\"{o}der.
\newblock Discrete conformal mappings via circle patterns.
\newblock {\em ACM Trans. Graph.}, 25(2):412--438, April 2006.

\bibitem{Kojima2003}
Sadayoshi Kojima, Shigeru Mizushima, and Ser~Peow Tan.
\newblock Circle packings on surfaces with projective structures.
\newblock {\em J. Differential Geom.}, 63(3):349--397, 2003.

\bibitem{Lam2016}
Wai~Yeung Lam.
\newblock Discrete minimal surfaces: critical points of the area functional
  from integrable systems.
\newblock {\em Int. Math. Res. Not. IMRN}, (6):1808--1845, 2018.

\bibitem{Lam2017}
Wai~Yeung Lam.
\newblock Minimal surfaces from infinitesimal deformations of circle packings.
\newblock {\em Adv. Math.}, 362, 2020.

\bibitem{Lam2015a}
Wai~Yeung Lam and Ulrich Pinkall.
\newblock Holomorphic vector fields and quadratic differentials on planar
  triangular meshes.
\newblock In {\em Advances in discrete differential geometry}, pages 241--265.
  Springer, [Berlin], 2016.

\bibitem{Lam2015}
Wai~Yeung Lam and Ulrich Pinkall.
\newblock Isothermic triangulated surfaces.
\newblock {\em Math. Ann.}, 368(1-2):165--195, 2017.

\bibitem{Loray2009}
Frank Loray and David Mar\'{i}n~P\'{e}rez.
\newblock Projective structures and projective bundles over compact {R}iemann
  surfaces.
\newblock {\em Ast\'{e}risque}, (323):223--252, 2009.

\bibitem{Mizushima2000}
Shigeru Mizushima.
\newblock Circle packings on complex affine tori.
\newblock {\em Osaka J. Math.}, 37(4):873--881, 2000.

\bibitem{Pinkall1993}
Ulrich Pinkall and Konrad Polthier.
\newblock Computing discrete minimal surfaces and their conjugates.
\newblock {\em Experiment. Math.}, 2(1):15--36, 1993.

\bibitem{Rivin}
Igor Rivin.
\newblock Euclidean structures on simplicial surfaces and hyperbolic volume.
\newblock {\em Ann. of Math. (2)}, 139(3):553--580, 1994.

\bibitem{Rodin1987}
Burt Rodin and Dennis Sullivan.
\newblock The convergence of circle packings to the {R}iemann mapping.
\newblock {\em J. Differential Geom.}, 26(2):349--360, 1987.

\bibitem{Sass2012}
Christopher~T. Sass, Kenneth Stephenson, and G.~Brock Williams.
\newblock Circle packings on conformal and affine tori.
\newblock In {\em Computational algebraic and analytic geometry}, volume 572 of
  {\em Contemp. Math.}, pages 211--220. Amer. Math. Soc., Providence, RI, 2012.

\bibitem{Schlenker2018}
Jean-March Schlenker and Andrew Yarmola.
\newblock Properness for circle packings and delaunay circle patterns on
  complex projective structures.
\newblock 2018, arXiv:1806.05254.

\bibitem{Smirnov2010}
Stanislav Smirnov.
\newblock Discrete complex analysis and probability.
\newblock In {\em Proceedings of the {I}nternational {C}ongress of
  {M}athematicians. {V}olume {I}}, pages 595--621. Hindustan Book Agency, New
  Delhi, 2010.

\bibitem{Stephenson2005}
Kenneth Stephenson.
\newblock {\em Introduction to circle packing: The theory of discrete analytic
  functions}.
\newblock Cambridge University Press, Cambridge, 2005.

\bibitem{Thurston1982}
W.~P. Thurston.
\newblock {\em The Geometry and Topology of 3-manifolds}.
\newblock Princeton University Notes, Princeton, N.J., 1982.

\bibitem{Williams2019}
G.~Brock Williams.
\newblock Constructing quasiconformal maps using circle packings and {B}rook's
  parametrization of quadrilaterals.
\newblock {\em Ann. Acad. Sci. Fenn. Math.}, 44(2):877--888, 2019.

\end{thebibliography}

\end{document}